\numberwithin{equation}{section}
\newtheorem{Theorem}{Theorem}[section]
\newtheorem*{Theorem*}{Theorem}
\newtheorem*{Corollary*}{Corollary}
\newtheorem{Lemma}[Theorem]{Lemma}
\newtheorem{Proposition}[Theorem]{Proposition}
\newtheorem{Corollary}[Theorem]{Corollary}
\newtheorem*{Theorem A}{Theorem A}
\newtheorem*{Theorem D}{Theorem D}
\newtheorem*{Theorem C}{Theorem C}
\newtheorem*{Theorem B}{Theorem B}
\theoremstyle{definition}
\theoremstyle{remark}
\newtheorem{Remark}[Theorem]{Remark}
\newtheorem*{Remark*}{Remark}
\newcommand{\C}{\mathbb{C}}
\newcommand{\Z}{\mathbb{Z}}
\renewcommand{\Z}{\mathbb{Z}}
\newcommand{\g}{\mathfrak{g}}
\newcommand{\gl}{\mathfrak{gl}}
\renewcommand{\sl}{\mathfrak{sl}}
\newcommand{\so}{\mathfrak{so}}
\renewcommand{\sp}{\mathfrak{sp}}
\newcommand{\cc}{\mathfrak{c}}
\newcommand{\h}{\mathfrak{h}}
\renewcommand{\u}{\mathfrak{u}}
\newcommand{\af}{\mathfrak{a}}
\newcommand{\mi}{\mathfrak{i}}
\renewcommand{\P}{\mathcal{P}}
\renewcommand{\O}{\mathbb{O}}
\newcommand{\X}{\mathcal{X}}
\newcommand{\Y}{\mathcal{Y}}
\newcommand{\ad}{\operatorname{ad}}
\newcommand{\Lie}{\operatorname{Lie}}
\newcommand{\Hom}{\operatorname{Hom}}
\newcommand{\gr}{\operatorname{gr}}
\newcommand{\Spec}{\operatorname{Spec}}
\newcommand{\Mat}{\operatorname{Mat}}
\newcommand{\Aut}{\operatorname{Aut}}
\newcommand{\GL}{\operatorname{GL}}
\newcommand{\SO}{\operatorname{SO}}
\newcommand{\F}{\operatorname{F}}
\newcommand{\ve}{\varepsilon}
\renewcommand{\h}{\widehat}
\newcommand{\he}{\widehat{e}}
\newcommand{\hd}{\widehat{d}}
\newcommand{\hcc}{\widehat{\cc}}
\newcommand{\ce}{\widecheck{e}}
\newcommand{\cd}{\widecheck{d}}
\newcommand{\cy}{\widecheck{y}}
\newcommand{\td}{\widetilde{d}}
\newcommand{\teta}{\widetilde{\eta}}
\newcommand{\ttheta}{\dot{\theta}}
\newcommand{\od}{\dot{d}}
\newcommand{\bnu}{\bar{\nu}}
\newcommand{\geez}{\operatorname{\ge\!0}}
\newcommand{\isoto}{\overset{\sim}{\longrightarrow}}
\newcommand{\onto}{\twoheadrightarrow}
\DeclareRobustCommand\longtwoheadrightarrow
\mathchardef\mh="2D
\title[]{\boldmath Shifted twisted Yangians and \\  Slodowy slices in classical Lie algebras}
\author{Lukas Tappeiner \& Lewis Topley}
\thanks{\nonumber{\it Mathematics Subject Classification} (2000 {\it revision}).
Primary 17B63, 17B37. Secondary 17B45, 17B08.}
\begin{document}
\maketitle

\begin{abstract}
In this paper we introduce the shifted twisted Yangian of type {\sf AI}, following the work of Lu--Wang--Zhang, and we study their semiclassical limits, a class of Poisson algebras. We demonstrate that they coincide with the Dirac reductions of the semiclassical shifted Yangian for $\gl_n$. We deduce that these shifted twisted Yangians admit truncations which are isomorphic to Slodowy slices for many non-rectangular nilpotent elements in types {\sf B}, {\sf C}, {\sf D}. As a direct consequence we obtain parabolic presentations of the semiclassical shifted twisted Yangian, analogous to those introduced by Brundan--Kleshchev for the Yangian of type {\sf A}. Finally we give Poisson presentations of Slodowy slices for all even nilpotent elements in types {\sf B}, {\sf C}, {\sf D}, generalising the recent work of the second author.
\end{abstract}

\section{Introduction}

\subsection{Background}

To each nilpotent orbit in the Lie algebra of a complex reductive group one can associate a Slodowy slice, which is a certain affine Poisson variety, and these slices are quantized by finite $W$-algebras \cite{GG02, Pr02}. These slices and their quantizations play several interesting roles in representation theory, algebraic geometry and mathematical physics.

The representation theory of the finite $W$-algebra has furnished new methods for studying the invariants of primitive ideals of enveloping algebras \cite{Br11, Lo10a, Lo15, PrJI, Pr10, Pr14, PT14}, as well as the quantizations of nilpotent orbits and their covers \cite{Lo10b, Lo22, To23}.

At the same time, a Slodowy slice can be viewed as a flat deformation of its nilpotent part \cite[Corollary~7.4.1]{Sl80}, and the latter is an example of a conic symplectic singularity \cite[Proposition~2.1.2]{Gi09}. Recently it has been conjectured that these nilpotent slices are the symplectic duals of certain affinizations of covers of nilpotent orbits \cite[\textsection~9.3]{LMM21}, and this duality is the central motif in the field of symplectic representation theory; see \cite{BLPW16} for an introduction.

Finite $W$-algebras and Slodowy slices also play a vital role in the theory of vertex algebras. Affine $W$-algebras form a large class of vertex algebras, which have been studied intensively in recent years \cite{Ar15, Ar17, AF19, DSKV16, Fr02, AKMFP24}. By taking the Zhu algebra or the $C_2$-algebra of an universal affine $W$-algebra at any level, one obtains a finite $W$-algebra or a Slodowy slice, respectively \cite[\textsection 5.7]{Ar17}. In this way, the algebras discussed in the present paper are the key tools for studying conformally graded simple modules and associated varieties of the simple affine vertex algebras. These are some of the key challenges in this field.

Yangians are a family of associative algebras of a very different pedigree.  
They were first introduced by the St. Petersburg school in the 1980s and are one of the fundamental examples of quantum groups defined using the RTT formalism \cite{FRT90}. Drinfeld made an extensive study of the representation theory of the Yangian associated to $\gl_n$, and introduced a new presentation to classify their finite dimensional modules \cite{Dr87, Dr88}. Later Olshanski introduced twisted Yangians, which are analogous to the twisted affine Lie algebras \cite{Ol92}, one of the key ingredients in the current article.

In \cite{RS99} Ragoucy and Sorba showed that if $N = nl$ for $n,l > 0$ and we choose the nilpotent orbit $\O \subseteq \gl_N$ which has all blocks of size $l$ (the so-called {\it rectangular} case) then the finite $W$-algebra associated to $\O$ can be realised as a quotient of a Yangian associated to $\gl_n$. Brundan and Kleshchev extended this relationship between Yangians and $W$-algebras, by introducing the shifted Yangians \cite{BK06}. They showed that for every $N>0$ and to every nilpotent orbit $\O \subseteq \gl_N$ we can associate a {\it shift matrix} $\sigma$ and a {\it  level} $l$, and that the truncated shifted Yangian $Y_{n,l}(\sigma)$ is isomorphic to the finite $W$-algebra for $\O$. The (doubled) canonical filtration on $Y_{n,l}(\sigma)$ lines up with the Kazdan filtration on the finite $W$-algebra and so, as a corollary, the semiclassical truncated shifted Yangian is isomorphic to the Slodowy slice, as a Poisson algebra. One of the key features of their argument was the introduction of {\it parabolic presentations} of Yangians \cite{BK05}, which we discuss later in this article.

In types {\sf B}, {\sf C} and {\sf D} much less is known. Ragoucy showed that certain truncations of semiclassical twisted Yangians are isomorphic (as Poisson algebras) to the Slodowy slices associated to rectangular nilpotent orbits \cite{Ra01}. Brown proved the quantum analogue for finite $W$-algebras using quite different methods \cite{Br09}. There has been a long-standing hope that some variant of  twisted Yangians could be related to Slodowy slices and finite $W$-algebras associated to non-rectanguar cases. In this paper we make the first progress in this problem by introducing shifted twisted Yangians of type {\sf AI}, and relating them to Slodowy slices. Our main tools are the Drinfeld presentation of twisted Yangians recently introduced by Lu, Wang and Zhang \cite{LWZ23}, as well as the methods of \cite{To23} and the parabolic presentations of Yangians introduced by Brundan--Kleshchev \cite{BK05}.

\subsection{Statement of results}

We introduce some notation related to Yangians, and we refer the reader to \cite{Mo07} for a good introduction. Fix $n > 0$, let $Y_n$ denote the Yangian associated to $\gl_n$ and let $y_n$ denote the (semiclassical) Yangian $\gr Y_n$, with respect to the canonical filtration. Since this algebra is commutative it carries a natural Poisson structure, and because this paper focuses on $y_n$ rather than $Y_n$, we will usually drop the prefix `semiclassical' in what follows.

Pick a symmetric shift matrix $\sigma \in \Mat_n(\Z_{\ge 0})$ (see \textsection \ref{ss:symmetricshift}) and let $y_n(\sigma)$ be the shifted Yangian, which is presented by generators and relations in \cite[\textsection 3.3]{To23}. According to Lemma~\ref{L:lambdabij} there is a natural bijection between pairs $(\sigma, \ell)$, where $\ell>0$ is an integer we call the {\it level}, and partitions of positive integers with precisely $n$ parts, all of the same parity.

Let $\Y_n$ denote the (non-commutative) twisted Yangian for the orthogonal Lie algebra $\mathfrak{o}_n$, where the twisting is performed with respect to the Cartan involution. Note that $\Y_n$ is denoted $Y_G(\mathfrak{o}_N)$ in \cite[Ch. 2]{Mo07}, where $G$ is the identity matrix in $\Mat_n(\C)$. Let $y_n^+$ denote the semiclassical limit of $\Y_n$ with respect to the canonical filtration, which we equip with its natural Poisson structure. In Section~\ref{ss:shiftedtwisted} we introduce the shifted twisted Yangian $\Y_n(\sigma)$ as well as the Poisson algebra $y_n^+(\sigma) = \gr \Y_n(\sigma)$.

In Section~\ref{ss:Diracforfixed} we recall the method of Dirac reduction for complex affine Poisson algebras. If $A$ is a countably generated complex Poisson algebra with a reductive group $H$ of Poisson automorphisms then $R(A,H)$ is a Poisson algebra such that $\Spec R(A,H) \cong X^H$. The Cartan involution on $\gl_n$ induces an involution on $y_n$ which we denote by $\tau$, see \eqref{e:tauon1}. Since $\sigma$ is symmetric the involution restricts to $y_n(\sigma)$.

Our first main theorem is the following, see Theorem~\ref{yangianquantizes}.
\begin{Theorem A}
 We have a Poisson isomorphism
 $$R(y_n(\sigma), \tau) \isoto y_n^+(\sigma).$$
\end{Theorem A}
The theorem is proven by defining a map on generators, checking relations, and then applying the Poincar{\'e}--Birhoff--Witt (PBW) theorem on both sides to see that a basis maps to a basis. We prove a PBW theorem for $\Y_n(\sigma)$, drawing heavily on the unshifted case (see \cite[\textsection 5.4]{LWZ23}) using an argument analogous to \cite[Theorem~2.1]{BK06}, and we deduce the PBW theorem for $y_n^+(\sigma)$ from the non-commutative setting. The PBW theorem for $R(y_n(\sigma), \tau)$ is \cite[Theorem~3.4]{To23}.

The main result of \cite[Part I]{To23} states that $R(y_n(\sigma), \tau)$ admits certain truncations which are isomorphic to Slodowy slices as Poisson algebras. Consequently we obtain the following.
\begin{Corollary*}
Suppose that either:
\begin{itemize}
\item[(i)] $\g = \so_n$ and all parts of the partition are odd; or
\item[(ii)] $\g = \sp_n$ and all parts of the partition are even.
\end{itemize}
Then the Poisson structure on the Slodowy slice is isomorphic to a truncation of the shifted twisted Yangian for $\mathfrak{o}_n$.
\end{Corollary*}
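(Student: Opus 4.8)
The plan is to deduce the corollary by combining Theorem~A with the main theorem of \cite[Part I]{To23}. Let $\lambda$ be the partition in the statement, so $\lambda$ has $n$ parts, all odd in case~(i) and all even in case~(ii). By Lemma~\ref{L:lambdabij} there is a symmetric shift matrix $\sigma$ and a level $\ell$ attached to $\lambda$, and the hypotheses (i),(ii) are precisely the parity conditions under which \cite[Part I]{To23} identifies a truncation of $R(y_n(\sigma),\tau)$ with a Slodowy slice. Write $e\in\g$ for a nilpotent element of Jordan type $\lambda$ and $S_e$ for its Slodowy slice, carrying the Poisson structure induced by the Kazhdan grading.

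Next I would assemble the two ingredients. First, \cite[Part I]{To23} gives a Poisson isomorphism between $\C[S_e]$ and the level-$\ell$ truncation of $R(y_n(\sigma),\tau)$, that is, the quotient of $R(y_n(\sigma),\tau)$ by the ideal generated by the coefficients of degree $>\ell$ of a distinguished generating series. Second, Theorem~A gives a Poisson isomorphism $R(y_n(\sigma),\tau)\isoto y_n^+(\sigma)$. The crucial point is that this isomorphism, being defined on generators by matching the standard generating series of the two algebras, carries the distinguished series above onto the corresponding distinguished series of $y_n^+(\sigma)=\gr\Y_n(\sigma)$, whose coefficients of degree $>\ell$ generate exactly the truncation ideal defining the level-$\ell$ truncation of $y_n^+(\sigma)$ (the semiclassical limit of the truncated shifted twisted Yangian). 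Hence the two truncation ideals correspond, Theorem~A descends to a Poisson isomorphism between the level-$\ell$ truncations of $R(y_n(\sigma),\tau)$ and of $y_n^+(\sigma)$, and composing the two isomorphisms gives $\C[S_e]$ isomorphic, as a Poisson algebra, to a truncation of $y_n^+(\sigma)$, i.e.\ to a truncation of the shifted twisted Yangian for $\mathfrak{o}_n$. This is the assertion.

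The translation of hypotheses (i),(ii) through Lemma~\ref{L:lambdabij}, and the verification that all ideals and maps in sight are homogeneous for the relevant gradings, are routine. The main obstacle is making the matching of truncation ideals precise: one must pin down which series of $R(y_n(\sigma),\tau)$ governs the truncation appearing in \cite[Part I]{To23}, which series of $y_n^+(\sigma)$ cuts out the truncated shifted twisted Yangian, and check — from the explicit formula for the map in the proof of Theorem~A — that these two series are interchanged together with the level $\ell$. Once this compatibility is in place the corollary follows at once.
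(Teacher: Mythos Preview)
Your approach is correct and matches the paper's: the corollary is deduced immediately by composing Theorem~A with the truncation result of \cite[Part I]{To23}. One small clarification: the paper does not independently define a truncated shifted twisted Yangian and then check that two truncation ideals match; rather, the truncation of $y_n^+(\sigma)$ in the statement is simply the quotient obtained by transporting the truncation ideal of $R(y_n(\sigma),\tau)$ across the isomorphism of Theorem~A, so the ``main obstacle'' you flag does not actually arise.
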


One of the key tools in the proofs of \cite{BK06} are the parabolic presentations of shifted Yangians. These interpolate between the RTT presentation and the Drinfeld presentation. The former should be thought of as analogous to presenting a simple Lie algebra via a Chevalley $\Z$-form, whilst the latter is akin to the Chevalley--Serre presentation. For future developments it is important to discover parabolic presentations of twisted Yangians $\Y_n$ and their shifted analogues (see \cite[\textsection 1.3]{LWZ23}). Our second main theorem provides these presentations at the semiclassical level. We state the result informally now, and provide a more detailed statement and proof in Remark~\ref{Proof of thm B}.
\begin{Theorem B}
We introduce parabolic presentations of $y_n^+(\sigma)$.
\end{Theorem B}

In the Corollary to Theorem~A we discussed Poisson presentations of a certain class of even orbit in types {\sf B}, {\sf C} and {\sf D}. Our final main theorem extends this presentation to all even orbits, by using the parabolic presentations of semiclassical shifted Yangians, generalising \cite[Part~I]{To23}. The key difference between Theorems~A and C is that we cannot yet relate the new Poisson presentations below to a shifted twisted Yangian.

The full statement and proof of the following theorem can be assembled from Theorem~\ref{dirredyangpres} and Theorem~\ref{kernelslicepres}.
\vspace{4pt}

\begin{Theorem C}
A presentation of the Poisson structure on the Slodowy slice associated to any even orbit in $\so_n$ or $\sp_n$ is given by the truncation of the Dirac reduction of the semiclassical shifted Yangian, with respect to any parabolic presentation.
\end{Theorem C}

Surprisingly, the truncation relations are significantly simpler when, either: the Lie algebra is not of type {\sf C}, or; the smallest two parts of partition of the nilpotent orbit have distinct sizes. We note that the the Corollary to Theorem A is significantly stronger than the result obtained in \cite[Theorem~1.3]{To23} thanks to these simplified truncation relations. This simplification depends crucially on the parabolic presentations of Theorem~B.

One technical detail which we have suppressed in this introduction is the notion of an admissible shape for $\sigma$, introduced by Brundan--Kleshcev in order to define parabolic presentations. In Section~\ref{ss:admissibleshapes} we recall this definition and introduce the new notion of a $(\beta, \sigma)$-admissible shape, where $\beta = \pm 1$. The parabolic presentations in Theorem~B apply to the case $\beta = 1$.

We conjecture that a version of Theorems~A and B should hold true for $\beta = -1$, when $y_n^+(\sigma)$ is replaced by the as-yet-undefined shifted twisted Yangian for $\sp_n$ (Satake diagram of type {\sf AII}). We also expect that the shifted twisted Yangian $\Y_n(\sigma)$ should quantize certain slices in the twisted affine Grassmanian associated to $\SO_n$, in parallel with \cite{KWWY14}.


\subsection*{Acknowledgements} Both authors would like to thank Jon Brown, Jon Brundan, Kang Lu, Hiraku Najajima, Yung-Ning Peng, Thomas Tappeiner, Weiqiang Wang and Matt Westaway for useful conversations during the development of this work. The first author is grateful to the University of Bath for funding his PhD studies. The second author would like to acknowledge funding from the UKRI Future Leaders Fellowship, grant numbers MR/S032657/1, MR/S032657/2, MR/S032657/3. 

\section{Preliminaries}

In this section we record the combinatorics, as well as some preliminary definitions and theorems to be used in the sequel. All vector spaces, algebras and algebraic varieties are defined over $\C$.

\subsection{Symmetric shift matrices and partitions}
\label{ss:symmetricshift}

Fix $n > 0$. A shift matrix is an $n \times n$ matrix $\sigma = (s_{i,j})$ with entries in $\Z_{\ge 0}$ satisfying the condition
\begin{eqnarray}
\label{e:shiftmatrixdefn}
s_{i, k} = s_{i,j} + s_{j, k}
\end{eqnarray}
whenever $|i - k| = |i-j| + |j-k|$ and $1\le i,j,k \le n$. We observe that every such shift matrix has zeroes on the diagonal, and is determined entirely by the sub- and super-diagonal entries, which can be chosen arbitrarily. We will always assume that our shift matrix is {\it symmetric} meaning $s_{i,j} = s_{j,i}$ for all $i,j$. Such a matrix is determined by $(s_{i,i+1} \mid i=1,...,n-1)$. 

Now fix an integer $\ell > 0$, which we refer to as {\it the level}. If we set
\begin{eqnarray}
\label{e:bigNdefn}
N = n\ell + 2(n-1) s_{1,2} + 2(n-2) s_{2,3} + \cdots +2 s_{n-1,n}
\end{eqnarray}
then the data $(\sigma, \ell)$ give rise to a partition $\lambda = (\lambda_1 \le \lambda_2 \le \cdots \le \lambda_n) \vdash N$ by
\begin{eqnarray}
\label{e:lambdafromsigmaell}
\begin{array}{l}
\lambda_1 = \ell,\\
\lambda_i = \lambda_{i-1} + 2s_{i-1,i} \text{ \  for \ } i=2,...,n.
\end{array}
\end{eqnarray}
We remark that all parts of $\lambda$ have the same parity.

The following easy lemma will be used often later on.
\begin{Lemma}
\label{L:lambdabij}
The assignment $(\sigma, \ell) \mapsto \lambda$ determined by \eqref{e:lambdafromsigmaell} defines a bijection from symmetric shift matrices with a fixed level, to partitions of positive integers with precisely $n$ parts, all of the same parity.$\hfill\qed$
\end{Lemma}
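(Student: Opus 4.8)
The statement is a bijection between symmetric shift matrices $\sigma$ with a fixed level $\ell > 0$ and partitions $\lambda = (\lambda_1 \le \cdots \le \lambda_n) \vdash N$ (some $N > 0$) with exactly $n$ parts, all of the same parity. The plan is to construct an explicit two-sided inverse to the assignment $(\sigma, \ell) \mapsto \lambda$ of \eqref{e:lambdafromsigmaell}. Given such a $\lambda$, I would define $\ell := \lambda_1 > 0$ and, since all parts have the same parity, $\lambda_i - \lambda_{i-1}$ is a nonnegative even integer for each $i = 2, \dots, n$; set $s_{i-1,i} := (\lambda_i - \lambda_{i-1})/2 \in \Z_{\ge 0}$. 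Extend to a full symmetric matrix by imposing symmetry $s_{j,i} = s_{i,j}$ and additivity $s_{i,k} = \sum_{|i - k| = |i-j|+|j-k|}$ along rows/columns, using \eqref{e:shiftmatrixdefn}. As noted in the text preceding the lemma, a shift matrix is determined by its super-diagonal entries (and symmetry then pins down the sub-diagonal and, via additivity, everything else), so this produces a well-defined symmetric shift matrix.

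**Checking the two compositions are identities.** Starting from $(\sigma, \ell)$, forming $\lambda$ via \eqref{e:lambdafromsigmaell}, and running the reverse map: we recover $\lambda_1 = \ell$ and $(\lambda_i - \lambda_{i-1})/2 = s_{i-1,i}$, and the super-diagonal entries determine the whole symmetric shift matrix, so we return to $(\sigma, \ell)$. Conversely, starting from a partition $\lambda$, building $(\sigma, \ell)$, and re-applying \eqref{e:lambdafromsigmaell}: the first part is $\ell = \lambda_1$, and inductively $\lambda_i = \lambda_{i-1} + 2 s_{i-1,i} = \lambda_{i-1} + (\lambda_i - \lambda_{i-1})$, so we recover $\lambda$. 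One must also verify that the $\lambda$ produced from $(\sigma, \ell)$ genuinely has the claimed properties: it has exactly $n$ parts by construction, $\lambda_1 = \ell > 0$ forces $N > 0$, the sequence is weakly increasing since each $s_{i-1,i} \ge 0$, and all parts have the same parity since consecutive differences are even — this is exactly the remark made after \eqref{e:lambdafromsigmaell}. Finally one checks $N = |\lambda| = \sum_i \lambda_i$ agrees with \eqref{e:bigNdefn}: summing $\lambda_i = \ell + 2(s_{1,2} + \cdots + s_{i-1,i})$ over $i = 1, \dots, n$ gives $n\ell + 2\sum_{i=1}^{n-1}(n-i)s_{i,i+1}$, matching \eqref{e:bigNdefn}.

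**Main obstacle.** There is no serious obstacle here — this is the "easy lemma" it is advertised to be. The only point requiring a moment's care is the well-definedness of the reverse map on the full matrix: one must confirm that the super-diagonal data $(s_{i,i+1})$, together with symmetry and the additivity constraint \eqref{e:shiftmatrixdefn}, extend uniquely to a shift matrix and that the extension is automatically symmetric. This follows from the observation recorded just before the lemma: $s_{i,k} = s_{i,i+1} + s_{i+1,i+2} + \cdots + s_{k-1,k}$ for $i < k$ (by iterating \eqref{e:shiftmatrixdefn}), and the analogous formula below the diagonal using the sub-diagonal entries, which equal the super-diagonal ones by symmetry; hence the whole matrix is forced and symmetric. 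With that in hand the verification above is a routine unwinding of definitions.
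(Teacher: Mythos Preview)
Your proposal is correct and is exactly the natural argument; the paper itself omits the proof entirely (the statement ends with $\qed$), so there is nothing to compare against beyond noting that your explicit inverse and the verification of both compositions are precisely what the phrase ``easy lemma'' is gesturing at.
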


\subsection{Admissible shapes}
\label{ss:admissibleshapes}
Keep a choice of $n\times n$ shift matrix $\sigma$. A {\it shape of size $n$} is a tuple $\nu = (\nu_1, \nu_2,...,\nu_m)$ of positive integers such that $\sum_i \nu_i = n$, and we write $\bnu = (\bnu_1,...,\bnu_m)$ where $\bnu_a = \Sigma_{b=1}^a \nu_b$. We refer to $m$ as the {\it length} of $\nu$.

We say that $\nu$ is a {\it $\sigma$-admissible shape} if $s_{i,j} = 0$ for all $\bnu_a + 1 \le i, j < \bnu_{a+1}$ and $a=1,...,m-1$. In \cite{BK06} these were simply referred to as admissible shapes. We note that $(1^n)$ is $\sigma$-admissible for every $\sigma$, and that for each $\sigma$ there is a unique $\sigma$-admissible shape of minimal length, which is known as the {\it minimal admissible shape}.

From a $\sigma$-admissible shape we can construct a {\it relative shift matrix} $\sigma(\nu)$, which is an $m \times m$-matrix with entries $s(\nu)_{i,j}$ in $\Z_{\ge 0}$ determined by
\begin{eqnarray}
s(\nu)_{a,b} = s_{\bnu_a, \bnu_b} \text{ \ for all \ } 1\le a,b\le m.
\end{eqnarray}
Since $s_{i, i+1} = 0$ for all $i = \bnu_a, \bnu_a+1,...,\bnu_{a+1}-1$, and $\sigma$ can be recovered from its super-diagonal entries, it follows that $\sigma$ can be recovered from the pair $(\sigma(\nu), \nu)$.

Similarly, if we fix a level $\ell$ and let $\lambda = (\lambda_1,...,\lambda_n)$ be the partition determined by the data $(\sigma, \ell)$ then we can define the {\it relative partition} $\lambda(\nu) = (\lambda(\nu)_1,...,\lambda(\nu)_m)$ by $\lambda(\nu)_a = \lambda_{\bnu_a}$. As above, we may recover $\lambda$ from the pair $(\lambda(\nu), \nu)$.

Now fix $\beta = \pm 1$. We say that a $\sigma$-admissible shape $\nu$ is {\it $(\beta , \sigma)$-admissible} if $\beta^{\nu_a} = \beta$ for $a=1,...,m$. When $\beta =-1$ this ensures that $\nu_a$ are all even, whilst for $\beta=1$ any $\sigma$-admissible shape is $(\beta, \sigma)$-admissible.

The notation of a $(\beta, \sigma)$-admissible shape is key to our later constructions and so we would like to motivate it here. In Theorem~\ref{dirredcurrent} we introduce an involution on the shifted current Lie algebra $\cc_n(\sigma)$, and the involution extends to the current algebra $\cc_n$. On the zero graded piece $\gl_n \subseteq \cc_n$, the matrix of the involution is symmetric when $\beta = 1$ and antisymmetric when $\beta = -1$. This is a shadow of the fact, central to our work: the (semiclassical) shifted twisted Yangian of type {\sf AI} surjects onto classical finite $W$-algebras for which there are no multiplicity constraints on the partition of the nilpotent element, whilst the (as-yet-undefined) shifted twisted Yangian of type {\sf AII} surjects onto those algebras for which there are multiplicity constraints (Cf. Section~\ref{ss:nilpotentorbits}). In this paper the role of the shifted twisted Yangian of type {\sf AII} is played by the Dirac reductions with $\beta = -1$ appearing in Theorem~C.

\subsection{Classical finite $W$-algebras}
\label{ss:classicalFWA}

Let $\g$ be the Lie algebra of a connected complex reductive group $G$. A nilpotent orbit of $\O \subseteq \g$ is a $G$-orbit consisting of elements which act nilpotently on every finite dimension representation. Let $\kappa$ be a choice of $G$-invariant non-degenerate bilinear form on $\g$.

The Jacobson--Morozov theorem states that nilpotent $G$-orbits are in 1-1 correspondence with the $G$-orbits in the space of Lie algebra homomorphisms $\Hom(\sl_2, \g)$. The bijection is provided by choosing a non-zero nilpotent element $e\in \sl_2$ and sending $\phi\in\Hom(\sl_2, \g)$ to $G\cdot \phi(e)$. Inverting this map we may associate an $\sl_2$-triple $(e,h,f)$ to any nilpotent element $e\in \g$, unique up to conjugacy.

The grading $\g = \bigoplus_{i\in \Z} \g(i)$ induced by $\ad(h)$ is called the Dynkin grading, and the variety $e + \g^f \subseteq \g$ is called the Slodowy slice to $\O$. By $\sl_2$-theory $T_e \O = \ad(\g)e$ and $T_e(e+ \g^f) = \g^f$ are mutually orthogonal complements inside $\g$ with respect to the Killing form, and so the slice is transversal to the orbit of $e$ (in fact, to every orbit which it intersects).


Let $G(<0)$ be the unipotent subgroup of $G$ with Lie algebra $\g(<0) = \bigoplus_{i\le 0} \g(i)$. Thanks to \cite[Lemma~2.1]{GG02} we have an isomorphism
\begin{eqnarray}
\label{e:GGlem}
G(<0) \times (e+\g^f) \to e + \g(\le 1)
\end{eqnarray}
of $G(<0)$-varieties, where $G(<0)$ acts by left multiplication on the left hand factor, and via the adjoint action on the right hand side.
We define the {\it classical finite $W$-algebra} to be the subquotient
\begin{eqnarray}
S(\g,e) := (S(\g) / S(\g) \g(\le -1)_e)^{G(<0)}
\end{eqnarray}
where $\g(\le 0)_e := \{x - \kappa(e,x) \mid x\in \g(\le 0)\}$. Using \eqref{e:GGlem} we identify $S(\g,e)$ with $\C[e + \g^f]$ as commutative algebras.

We can equip $S(\g,e)$ with a Poisson structure, using a version of Hamiltonian reduction (see \cite[\textsection 3.1]{ACET20}, for example). Describing this Poisson structure is the main goal of the present article.

\subsection{Nilpotent orbits in classical Lie algebras}
\label{ss:nilpotentorbits}

Pick $\ve = \pm 1$ and an integer $N > 0$ satisfying $\ve^N = 1$. Suppose that $(\cdot, \cdot) : \C^N \times \C^N \to \C$ is a form satisfying $(u,v) = \ve (v,u)$ for all $u,v\in \C^N$. The Lie algebra subalgebra $\g_0$ of $\gl_N$ consisting of skew-self adjoint matrices with respect to $(\cdot, \cdot)$ is a classical simple Lie algebra. More precisely $\g_0 \cong \sp_N$ when $\ve= -1$ and $\g_0 \cong \so_N$ when $\ve = 1$. The connected component of the subgroup of $\GL_N$ preserving the form $(\cdot, \cdot)$ is denoted $G_0$.

The nilpotent $\GL_N$-orbits in $\gl_N$ are parameterised by partitions $\lambda \vdash N$, thanks to the theory of Jordan normal form. If $\O_\lambda$ is the orbit corresponding to $\lambda \vdash N$ then $\O_\lambda \cap \g_0$ is either empty or is a union of at most two $G_0$-orbits \cite[\textsection 5]{CM93}. In particular, $\O_\lambda \cap \g_0 \ne \emptyset$ if and only if:
\begin{itemize}
\item $(\ve = 1)$ all even parts of $\lambda$ occur with even multiplicity,
\item $(\ve = -1)$ all odd parts of $\lambda$ occur with even multiplicity.
\end{itemize}
We denote the set of partitions satisfying these conditions by $\P_\ve(N)$. A partition $\lambda \in \P_\ve(N)$ is called {\it even} if all parts of $\lambda$ have the same parity.

Also  for $\lambda \in \P_\ve(N)$ we note that $\O_\lambda \cap \g_0$ is a union of two $G_0$-orbits if and only if $\ve = 1$ and all parts of $\lambda$ are even; such partitions are called {\it very even}. In all other cases, $\O_\lambda \cap \g_0$ is a single orbit.

\subsection{Dirac reduction for fixed points}
\label{ss:Diracforfixed}

Let $A$ be an affine Poisson algebra with a group $H$ of Poisson automorphisms. The ideal of coinvariants is denoted $I_H = (h \cdot a - a\mid h\in H, \ a\in A)$. The $H$-fixed points $I_H^H \subseteq I_H$ is a Poisson ideal of $A^H$ and the Dirac reduction of $A$ by $H$ is the Poisson algebra
$$R(A, H) := A^H /I_H^H.$$
It is explained in \cite[\textsection 2.1]{To23} that when $X = \Spec(A)$ is an affine Poisson variety the Dirac reduction equips $X^H$ with the structure of an affine Poisson scheme, see also \cite[\textsection 5.4.3]{LPV13}.

Now retain the notation of Section~\ref{ss:classicalFWA}. We suppose that $H \subseteq \Aut(\g)$ is a reductive group of automorphisms fixing the $\sl_2$-triple pointwise. Then $H$ induces a group of Poisson automorphisms of $S(\g,e)$. We write $\g_0 = \g^H$ for the fixed point subalgebra, which is also the Lie algebra of a reductive group \cite[Lemma~2.5(2)]{To23}. One of the main applications of Dirac reduction, in the context of the present work, is the following.

\begin{Theorem}
\cite[Theorem~2.6]{To23}
There is an isomorphisms of Poisson algebras
\begin{eqnarray*}
R(S(\g,e), H) \isoto S(\g_0, e).
\end{eqnarray*}
\end{Theorem}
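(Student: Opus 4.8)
The statement to prove is the isomorphism of Poisson algebras $R(S(\g,e), H) \isoto S(\g_0, e)$, where $H \subseteq \Aut(\g)$ is a reductive group fixing the $\sl_2$-triple $(e,h,f)$ pointwise and $\g_0 = \g^H$. Since this is cited as \cite[Theorem~2.6]{To23}, the plan is to reconstruct its proof from the definitions given in the excerpt.

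\medskip

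\noindent\textbf{Plan of proof.} The key observation is that $H$ fixes $(e,h,f)$, hence preserves the Dynkin grading $\g = \bigoplus_i \g(i)$, and $\g_0 = \g^H$ inherits the grading $\g_0(i) = \g(i)^H$. Because $H$ is reductive, taking $H$-invariants is exact, so $\g^f_0 = (\g^f)^H$, the $\sl_2$-triple $(e,h,f)$ lies in $\g_0$ and is its own Dynkin $\sl_2$-triple there, and the Slodowy slice decomposes compatibly: $e + \g_0^f = (e + \g^f)^H$. First I would verify that the Gan--Ginzburg isomorphism \eqref{e:GGlem} is $H$-equivariant (which is immediate since everything in its construction is built from the Dynkin grading and the form $\kappa$, and $H$ preserves both — note $\kappa$ can be chosen $H$-invariant), and similarly that $G_0(<0) = G(<0)^H$ has Lie algebra $\g_0(<0) = \g(<0)^H$. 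Then restricting \eqref{e:GGlem} to $H$-fixed points gives the Gan--Ginzburg isomorphism for $(\g_0, e)$, identifying $S(\g_0,e)$ with $\C[e+\g_0^f] = \C[(e+\g^f)^H]$.

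\medskip

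\noindent\textbf{Matching the two sides as commutative algebras.} On the Dirac-reduction side, $R(S(\g,e),H) = S(\g,e)^H / I_H^H$. Since $S(\g,e) \cong \C[e+\g^f]$ $H$-equivariantly and $H$ is reductive, $S(\g,e)^H \cong \C[e+\g^f]^H$; the quotient by the $H$-coinvariant ideal $I_H^H$ is exactly the scheme-theoretic fixed-point ring, which for a reductive group acting on a smooth affine variety (here $e+\g^f \cong \af^{\dim \g^f}$, an affine space) is the coordinate ring of the smooth fixed locus $(e+\g^f)^H = e + \g_0^f$. So $R(S(\g,e),H) \cong \C[e+\g_0^f] \cong S(\g_0,e)$ as commutative algebras. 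The one point needing care here is that $I_H^H$ really does cut out the reduced fixed-point scheme and not something with nilpotents; this is where reductivity of $H$ and smoothness of $e+\g^f$ are used (e.g. via a Luna-type slice argument, or directly: invariants of a reductive group on a polynomial ring, modulo the coinvariant ideal, give the fixed-point ring).

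\medskip

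\noindent\textbf{Matching the Poisson structures, and the main obstacle.} The remaining and genuinely substantive step is that the above identification is Poisson. Both Poisson structures arise by Hamiltonian reduction (\cite[\textsection3.1]{ACET20}): $S(\g,e)$ from the Lie--Poisson structure on $S(\g)$, and $S(\g_0,e)$ from that on $S(\g_0)$. The restriction map $S(\g) \to S(\g_0)$ (dual to $\g_0 \hookrightarrow \g$, using the $H$-invariant form to split $\g = \g_0 \oplus \g_0^\perp$) is a Poisson map onto its image after identifying $\g_0^* $ with the $H$-invariants $(\g^*)^H$; I would check that this restriction is compatible with the two Hamiltonian reductions, i.e.\ it sends the ideal $S(\g)\g(\le -1)_e$ into the analogous ideal for $\g_0$ and intertwines the $G(<0)$-invariants with the $G_0(<0)$-invariants, yielding a Poisson map $R(S(\g,e),H) \to S(\g_0,e)$ which agrees with the commutative-algebra isomorphism above. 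The hard part is precisely this verification that Dirac reduction commutes with the two Hamiltonian reductions — that the order of ``take $H$-fixed points'' and ``do the $W$-algebra Hamiltonian reduction'' does not matter. Concretely one must track how the coinvariant ideal $I_H$ interacts with $S(\g)\g(\le -1)_e$ and with passing to $G(<0)$-invariants, and use that all the relevant group actions commute (since $H$ normalizes $G(<0)$ and fixes $e$, so $\g(\le -1)_e$ is $H$-stable) together with exactness of $H$-invariants. Once these compatibilities are in place the Poisson isomorphism follows formally.
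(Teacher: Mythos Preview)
The paper does not give its own proof of this statement: it is simply quoted as \cite[Theorem~2.6]{To23} and used as a black box. So there is no proof in the paper to compare against; your proposal is a reconstruction of the argument from \cite{To23}.

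Your outline captures the right ingredients and the correct overall architecture: $H$-equivariance of the Gan--Ginzburg isomorphism gives $(e+\g^f)^H = e+\g_0^f$ and hence the commutative-algebra identification $R(S(\g,e),H)\cong S(\g_0,e)$, and the Poisson compatibility amounts to showing that Dirac reduction by $H$ commutes with the Hamiltonian reduction defining $S(\g,e)$. Two places deserve more care. First, your discussion of the ``restriction map $S(\g)\to S(\g_0)$'' being Poisson is not quite right as stated: restriction of functions along $\g_0^*\hookrightarrow \g^*$ is not a Poisson map on all of $S(\g)$, only after passing to $H$-invariants and quotienting by the coinvariant ideal --- i.e.\ precisely after performing the Dirac reduction $R(S(\g),H)\cong S(\g_0)$. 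It is cleaner to first establish this base case $R(S(\g),H)\cong S(\g_0)$ of Poisson algebras (which is straightforward since $\g_0\subset\g$ is a Lie subalgebra with $H$-stable complement), and then argue that both Hamiltonian reductions are built from $H$-stable data, so the two reductions commute. Second, your claim that $I_H^H$ cuts out the reduced fixed locus is correct here but for a simpler reason than a Luna slice: $e+\g^f$ is an affine space on which the reductive group $H$ acts linearly (after translating by $e$), so the coinvariant ideal is generated by linear forms and the quotient is manifestly a polynomial ring, hence reduced.
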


\section{Shifted twisted Yangian versus Dirac reduction of shifted Yangians}

\subsection{Twisted Yangian}

We refer the reader to \cite[Ch.~2]{Mo07} for a good introduction to Olshanski's theory of twisted Yangians.

Recall that the twisted Yangian $\Y_n$ associated to $\mathfrak{o}_n$ is generated by elements $\{ s_{i,j}^{(r)} \mid 1\le i,j \le n, \ \ r \ge 0\} $ subject to the quaternary and symmetry relation \cite[Proposition 2.2.1]{Mo07}. In \cite{LWZ23} those authors solved the long-standing problem of providing a Drinfeld presentation of $\Y_n$ associated to the split bilinear form on $\C^n$. These are known as twisted Yangians of type {\sf AI}, and from now on we will focus on type {\sf AI} type.

We recall the main results of their their paper, which is the Drinfeld presentation of $\Y_n$, see \cite[Theorem~5.1]{LWZ23}.
\begin{Theorem}
\label{T: Twisted Drinfeld}
Let $C=(c_{ij})_{1\le i,j\le n-1}$ be the Cartan matrix of type ${\sf A}_{n-1}$, and set $h_{i,-1}=1$, $c_{0j}=-\delta_{j,1}$. Then the twisted Yangian $\Y_n$ is generated by
\begin{eqnarray}
\label{e: Twisted generators}
\{h_{i,r},  b_{j,s} \mid 0 < i<N \ \ 1\le j<N, \ \ r,s \ge 0\}
\end{eqnarray}
 subject only to the following relations, for $r,s \in \mathbb{N}$:
\begin{align}
[ h_{i,r}, h_{j,s}]& =0, \label{hhN} \ \ \  \ \ { h_{i,2r}=0, } \\	
[ h_{i,r+1}, b_{j,s}]-[ h_{i,r-1}, b_{j,s+2}]&=c_{ij}[h_{i,r-1}, b_{j,s+1}]_- +\frac{1}{4}c_{ij}^2[ h_{i,r-1}, b_{j,s}],\label{hbN}\\
[ b_{i,r+1}, b_{j,s}]-[ b_{i,r}, b_{j,s+1}]&=\frac{c_{ij}}{2} [ b_{i,r}, b_{j,s}]_- -2\delta_{ij}(-1)^r h_{i,r+s+1},\label{bbN}\\
[b_{i,r},b_{j,s}]&=0,\qquad  \text{ for }|i-j|>1,\label{bbN2}\\
\mathrm{Sym}_{k_1,k_2}\big[b_{i,k_1},[b_{i,k_2},b_{j,r}] \big] &=(-1)^{k_1}[h_{i,k_1+k_2+1},b_{j,r-1}],
\quad \text{ for } c_{ij}=-1.\label{serreN}
\end{align}
where $\mathrm{Sym}_{k_1, k_2}$ denotes the sum of the two expressions obtained by permuting $k_1, k_2$ and $[ a, b]_- := ab + ba$  denotes the anticommutator.
\end{Theorem}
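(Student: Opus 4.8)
The plan is to deduce this Drinfeld presentation of $\Y_n$ from Olshanski's RTT presentation (via the $s_{i,j}^{(r)}$, the quaternary relation and the symmetry relation), following the strategy of \cite{LWZ23}, which transports the Gauss decomposition technique of Brundan--Kleshchev \cite{BK05} from the Yangian $Y_n$ to its coideal subalgebra $\Y_n$. Recall that $\Y_n$ embeds in $Y_n$ as a one-sided coideal subalgebra, with the generating matrix $S(u) = (s_{i,j}(u))$ of $\Y_n$ expressed through the generating matrix $T(u)$ of $Y_n$ by Olshanski's formula, roughly $S(u) = T(u)\,\widetilde{T}(-u)$ with $\widetilde{T}$ the transpose with respect to the split form on $\C^n$. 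Writing the Gauss decomposition $T(u) = F(u)\,D(u)\,E(u)$ into lower-unitriangular, diagonal and upper-unitriangular matrices yields the Drinfeld generators $d_i(u), e_i(u), f_i(u)$ of $Y_n$, which obey the Levendorskii-type relations of \cite{BK05}. Substituting into $S(u)$ produces explicit expressions for the $s_{i,j}(u)$ as noncommutative series in the $d_i, e_i, f_i$; the key point is that the symmetry relation of $\Y_n$ forces the $e$- and $f$-data to be identified, so that a single family of ``$b$''-currents $b_j(u)$ suffices, together with ``$h$''-currents $h_i(u)$ assembled from the diagonal Gauss coordinates of $S(u)$. One then defines $h_{i,r}, b_{j,s}$ as the coefficients of these currents, after a normalisation chosen so that $h_{i,2r}=0$ and so that the $(-1)^r$ signs in \eqref{bbN} and \eqref{serreN} come out correctly.

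Next I would verify the relations one family at a time, expanding everything in powers of the spectral parameters. The commutativity \eqref{hhN} and the locality \eqref{bbN2} drop out of the reflection equation together with the block structure of the Gauss coordinates; the mixed relation \eqref{hbN} and the quadratic relation \eqref{bbN} are obtained by expanding the reflection equation in $u$ and $v$ and reorganising using the Yangian Drinfeld relations; and the Serre relations \eqref{serreN} are the most laborious, to be derived from the Serre relations of $Y_n$ together with the quadratic relations just established, mirroring the untwisted derivation while carrying the twist. Recovering the $s_{i,j}^{(r)}$ from the $h_{i,r}, b_{j,s}$ shows that these elements generate $\Y_n$, so the assignment extends to a surjection $\widetilde{\Y}_n \onto \Y_n$, where $\widetilde{\Y}_n$ is the abstract algebra presented by \eqref{e: Twisted generators} and \eqref{hhN}--\eqref{serreN}.

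Finally, to see the relations are complete I would show $\widetilde{\Y}_n \onto \Y_n$ is injective by a PBW comparison: using the defining relations one straightens an arbitrary monomial in the generators into a fixed ordered form, producing a spanning set of $\widetilde{\Y}_n$ indexed by ordered monomials in the $h_{i,r}$ ($r$ odd) and the $b_{j,s}$; the PBW theorem for $\Y_n$ in the RTT presentation \cite[Ch.~2]{Mo07} then shows that the images of these monomials are linearly independent, hence the spanning set is a basis and the surjection is an isomorphism. I expect this last step to be the main obstacle: the straightening procedure must be shown to terminate and to create no relations beyond those listed, and the $(-1)^r$ signs and anticommutators $[\,\cdot,\cdot\,]_-$ peculiar to the twisted case make the ordering bookkeeping delicate --- getting the normalisation of the $h$- and $b$-currents right at the outset is exactly what makes the straightening close up.
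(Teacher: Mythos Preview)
The paper does not prove this theorem at all: it is stated there as a recollection of \cite[Theorem~5.1]{LWZ23}, with the sentence ``We recall the main results of their paper, which is the Drinfeld presentation of $\Y_n$'' immediately preceding it. So there is no proof in the paper to compare against.

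Your outline is, in broad strokes, the strategy actually carried out in \cite{LWZ23}: embed $\Y_n$ in $Y_n$, use the Gauss decomposition of $T(u)$ to define the Drinfeld-type currents, verify the relations from the reflection/quaternary equation, and close the loop with a PBW argument. So your plan is aligned with the original source rather than with anything in the present paper. One caution: the most delicate part of \cite{LWZ23} is not the straightening in $\widetilde{\Y}_n$ per se, but the passage through the associated graded with respect to the loop filtration and the identification of $\gr_L \Y_n$ with $U$ of a twisted current algebra (their \S5.4, also echoed in this paper's Lemma~\ref{TwistedYPBWS}); your sketch would be strengthened by making explicit that the linear independence step goes via this filtration rather than by a direct monomial argument in the RTT generators.
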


We equip $\Y_n$ with the {\it loop filtration}, by placing $h_{i,r}$ and $b_{i,r}$ in degree $r$. We also equip $\Y_n$ with the {\it canonical filtration} by placing $h_{i,r}$ and $b_{i,r}$ in degree $r + 1$.  We will denote the corresponding associated graded algebras by $\gr_L \mathcal{Y}_{n}$ and $\gr_C \mathcal{Y}_{n}$ for the loop and canonical filtrations, respectively.

Analogously to \cite[Proof of Theorem 5.1]{LWZ23} we inductively define elements
\begin{eqnarray}
\label{e:extra gens}
\begin{array}{ll}
b_{i+1,i}^{(r)} := b_{i,r} \in \Y_n \ \ \text{ for } \ \ 1< i \le n,\\
b _{i+1, j}^{(r)} = [b_{i+1, i}^{(0}, b_{i,j}^{(r)} ] \in \Y_n \ \ \text{ for } \ \  1\le j < i \le n \\
b_{i,i}^{(r)}:=h_{0,r}+\dots+h_{i-1,r}
\end{array}
\end{eqnarray}
and let $b_{i,j}^{(r)}:=(-1)^{(r-1)}b_{j,i}^{(r)}$ for $i < j$.
\subsection{Shifted twisted Yangian and the semiclassical limit}
\label{ss:shiftedtwisted}

Retain the notation of Theorem~\ref{T: Twisted Drinfeld}. Let $\sigma = (s_{i,j})_{1,\le i, j \le N}$ be a symmetric $n \times n$ shift matrix. We define the shifted twisted Yangian $\Y_n(\sigma)$ to be the associative algebra generated by the elements
\begin{eqnarray}
\label{e:twisted shifted generators}
\{h_{i,r},  b_{j,s} \mid 0 < i<N \ \ 1\le j<N, \ \ r \ge 0, s \ge s_{j, j+1} \}
\end{eqnarray}
subject to relations \eqref{hhN}--\eqref{serreN} for all admissible indexes (i.e. whenever the indexes make sense). This algebra is equipped with a loop filtration, again placing $h_{i,r}$ and $b_{i,r}$ in degree $r$, and a canonical filtration which places those same elements in degree $r+1$.
We will denote the corresponding associated graded algebras by $\gr_L \mathcal{Y}_{n}(\sigma)$ for the loop filtration and by $\gr_C \mathcal{Y}_{n}(\sigma)$ for the canonical filtration respectively. 
Similar to \eqref{e:extra gens} we define elements
\begin{eqnarray}
\label{e:twisted shifted extra gens}
\begin{array}{ll}
{}^\sigma b_{i+1,i}^{(r)} := b_{i,r} \in \Y_n(\sigma) \ \ \text{ for } \ \ 1< i \le n, \ r \geq s_{i+1,i}\\
{}^\sigma b_{i+1, j}^{(r)}= [{}^\sigma b_{i+1, i}^{(s_{i,i+1})}, {}^\sigma b_{i,j}^{(r-s_{i+1,i})}] \in \Y_n(\sigma) \ \ \text{ for } \ \  1\le j < i \le n, \ r \geq s_{i+1,j} \\
{}^{\sigma}b_{i,i}^{(r)}:=h_{0,r}+\dots+h_{i-1,r}
\end{array}
\end{eqnarray}
and let ${}^{\sigma}b_{i,j}^{(r)}:=(-1)^{(r-1)}{}^{\sigma}b_{j,i}^{(r)}$ for $i < j$.

\begin{Theorem}\label{TwistedshiftedPBW}
\begin{enumerate}
\label{T:Gens to Gens}
\setlength{\itemsep}{4pt}
\item The map $\Y_n(\sigma) \to \Y_n$ sending 
\begin{eqnarray}
\begin{array}{rcl}
b_{i, r} & \longmapsto &  b_{i,r}\\
h_{i,r} & \longmapsto & h_{i,r}.
\end{array}
\end{eqnarray}
is an injective filtered homomorphism with respect to both the loop filtration and the canonical filtration.
\item $\Y_n(\sigma)$ has a PBW basis consisting of ordered monomials in the elements $$\{ h_{i,r}, {}^\sigma b_{j,k}^{(s)} \mid 0 \le i < n, \ \ 1\le k < j  \le n,\ \ r \ge 0, \ \  s \ge s_{j, k} \}.$$
\end{enumerate} 
\end{Theorem}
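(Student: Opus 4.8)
The plan is to follow the strategy of \cite[Theorem~2.1]{BK06}, as the authors indicate, treating the shifted algebra as built out of the unshifted one. First I would address part (1). The homomorphism $\Y_n(\sigma) \to \Y_n$ is well defined because the defining relations \eqref{hhN}--\eqref{serreN} of $\Y_n(\sigma)$ are precisely a subset of those of $\Y_n$ (only the generators with $s \geq s_{j,j+1}$ appear, and the relations among them still hold in $\Y_n$); it is filtered for both the loop and canonical filtrations because degrees of generators are assigned identically on both sides. Injectivity is the substantive claim, and I would derive it \emph{simultaneously} with part (2) via a dimension-counting/PBW argument: one first establishes that the stated ordered monomials \emph{span} $\Y_n(\sigma)$, then shows the image of this spanning set in $\Y_n$ is linearly independent, which forces both the spanning set to be a basis and the map to be injective.

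For the spanning statement I would argue as in \cite[\textsection 5.4]{LWZ23} and the analogous unshifted PBW theorem: use the relations \eqref{hhN}--\eqref{serreN} (together with the inductive definitions \eqref{e:twisted shifted extra gens} of the elements ${}^\sigma b_{j,k}^{(s)}$) to rewrite an arbitrary monomial as a linear combination of ordered monomials in the $h_{i,r}$ and the ${}^\sigma b_{j,k}^{(s)}$ with $s \geq s_{j,k}$, modulo lower filtration degree; here one must check that the commutators appearing when straightening never force a ${}^\sigma b_{j,k}^{(s)}$ with $s < s_{j,k}$ to appear — this is exactly where the shift-matrix condition \eqref{e:shiftmatrixdefn}, namely $s_{i,j}+s_{j,k}=s_{i,k}$ along the relevant chains, is used, since in the inductive definition of ${}^\sigma b_{i+1,j}^{(r)}$ the total shift $s_{i,i+1}+s_{i+1,j}$ must equal $s_{i+1,j}$ (reindexed appropriately) for degrees to match. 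An induction on filtration degree then completes the spanning argument: in the associated graded algebra $\gr_L \Y_n(\sigma)$ (or $\gr_C$) the straightening relations become commutation relations in a Lie algebra, and one checks that the images of the chosen elements span.

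For linear independence, I would transport the problem into $\Y_n$: under the map of part (1), the element ${}^\sigma b_{j,k}^{(s)} \mapsto b_{j,k}^{(s)}$ and $h_{i,r}\mapsto h_{i,r}$ (this compatibility is immediate by comparing \eqref{e:extra gens} with \eqref{e:twisted shifted extra gens}, using again that the shift entries in the bracket are consistent). Thus the image of the proposed PBW set for $\Y_n(\sigma)$ is a \emph{subset} of the PBW set for $\Y_n$ established in \cite[\textsection 5.4]{LWZ23} — namely those monomials in which every $b$-type generator $b_{j,k}^{(s)}$ has superscript $s \geq s_{j,k}$. Since the full PBW set for $\Y_n$ is linearly independent, so is this subset; hence the image spanning set is linearly independent, which simultaneously shows the spanning set for $\Y_n(\sigma)$ is a basis and the map of part (1) is injective.

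The main obstacle I anticipate is the bookkeeping in the straightening/spanning step: one must verify carefully that none of the relations \eqref{hbN}--\eqref{serreN}, when used to reorder a monomial in $\Y_n(\sigma)$, produces a generator outside the allowed range $s \geq s_{j,j+1}$ (equivalently $s \geq s_{j,k}$ for the higher root vectors), and that the inductively defined ${}^\sigma b_{j,k}^{(s)}$ satisfy the same recursive bracket identities as in the unshifted case up to lower-order terms — this requires tracking how the shift matrix condition \eqref{e:shiftmatrixdefn} interacts with the commutator formulas, exactly the point where the argument differs from the $\mathfrak{gl}_n$ case \cite{BK06} and where the Serre-type relation \eqref{serreN} introduces $h$-terms that must also be controlled. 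Once this is in place, everything else follows formally from the unshifted PBW theorem of \cite{LWZ23}.
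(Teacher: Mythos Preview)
Your overall strategy matches the paper's, but there is a concrete error in the linear independence step. You claim that under the map $\Y_n(\sigma)\to\Y_n$ one has ${}^\sigma b_{j,k}^{(s)}\mapsto b_{j,k}^{(s)}$, and that this is ``immediate by comparing \eqref{e:extra gens} with \eqref{e:twisted shifted extra gens}''. This is false in $\Y_n$ itself: in \eqref{e:extra gens} the recursion uses $[b_{i+1,i}^{(0)},b_{i,j}^{(r)}]$, whereas in \eqref{e:twisted shifted extra gens} it uses $[{}^\sigma b_{i+1,i}^{(s_{i,i+1})},{}^\sigma b_{i,j}^{(r-s_{i+1,i})}]$, whose image is $[b_{i+1,i}^{(s_{i,i+1})},b_{i,j}^{(r-s_{i,i+1})}]$. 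Unless $s_{i,i+1}=0$, these two commutators differ by lower-order terms in the loop filtration, so the image of your proposed PBW set is \emph{not} literally a subset of the unshifted PBW set.

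The paper fixes exactly this point by passing to the associated graded algebra $\gr_L$. There, \cite[(5.3)]{LWZ23} gives current-algebra type relations $[\bar b_{i,j}^{(r)},\bar b_{k,l}^{(s)}]=\delta_{j,k}\bar b_{i,l}^{(r+s)}-\cdots$, so that $[\bar b_{i+1,i}^{(a)},\bar b_{i,j}^{(b)}]=\bar b_{i+1,j}^{(a+b)}$ regardless of how the total degree is split between the two factors. Hence on $\gr_L$ the induced map genuinely sends ${}^\sigma\bar b_{j,k}^{(s)}$ to $\bar b_{j,k}^{(s)}$, and linear independence is then read off from the unshifted PBW theorem in $\gr_L\Y_n$. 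Linear independence in $\gr_L\Y_n(\sigma)$ lifts to $\Y_n(\sigma)$, giving both the PBW basis and injectivity. Your spanning argument is fine and is what the paper does; the only repair needed is to run the independence argument in $\gr_L$ rather than in $\Y_n$ directly.
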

\begin{proof}
It is clear that the map is a filtered homomorphism with respect to both filtrations. As the  map $\mathcal{Y}_N(\sigma) \to \mathcal{Y}_N$ is filtered with respect to the loop filtration it induces a map $\gr_L \mathcal{Y}_N(\sigma) \to \gr_L \mathcal{Y}_N$. We will write $\bar{h}_{i,2r+1}, \ {}^{\sigma}\bar{b}_{i,j}^{(r)}$ for the images of $h_{i,2r+1}, \ {}^{\sigma}b_{i,j}^{(r)}$ in $\gr_L \mathcal{Y}_N(\sigma)$. It then follows from \cite[(5.3)]{LWZ23} that the map on the associated graded sends ${}^{\sigma}\bar{b}_{i,j}^{(r)} \in \gr_L \mathcal{Y}_N(\sigma)$ to $\bar{b}_{i,j}^{(r)} \in \gr_L \mathcal{Y}_N$ as defined in \cite[§5.4]{LWZ23} but from the proof of \cite[Theorem 5.1]{LWZ23} we see that the ordered monomials in the elements $\bar{h}_{i,2r+1}, \ \bar{b}_{i,j}^{(r)}$ are linearly independent in $\gr_L \mathcal{Y}_n$. In particular the ordered monomials in the elements  $h_{i,2r+1}, {}^{\sigma}b_{i,j}^{(r)} \in \mathcal{Y}_N(\sigma) $ are linearly independent too.

Following the proof of \cite[Theorem 5.1]{LWZ23} verbatim it is easy to see that the ordered monomials in the elements $h_{i,2r+1}, {}^{\sigma}b_{j,k}^{(s)}$ for $0 \leq i < N, \ 1 \leq k<j \leq N, \ r \in \mathbb{N}, \ s \geq s_{j,k}$ span $\mathcal{Y}_n(\sigma)$. Hence the map is injective.
\end{proof}
Recall the definition of the shifted current algebra $\cc_n(\sigma) \subseteq \gl_n[z]$ from \cite[(3.3)]{To23}. Let $\tau:\gl_n \to \gl_n: A \mapsto -A^T$. We define the \textit{twisted shifted current algebra} for $\mathfrak{so}_n$  to be $\cc_n(\sigma)^{\tau}:=\{A(z) \in \cc_n(\sigma) \mid A(z)= \tau(A(-z)) \}$.
\begin{Lemma}\label{TwistedYPBWS}
\begin{enumerate}
\item We have a natural isomorphism $U(\cc_n(\sigma)^{\tau}) \to \gr_L \Y_n(\sigma)$ given by
\begin{eqnarray}
e_{i,j}z^r+(-1)^{r-1}e_{j,i}z^r \mapsto {}^{\sigma}b_{i,j}^{(r)}+F^{L}_{r-1}\Y_n
\end{eqnarray}
\item We have a natural isomorphism of commutative algebras $S(\cc_n(\sigma)^{\tau}) \to \gr_C \Y_n(\sigma)$ given by
\begin{eqnarray}
e_{i,j}z^r+(-1)^{r-1}e_{j,i}z^r \mapsto {}^{\sigma}b_{i,j}^{(r)}+F^{C}_{r}\Y_n
\end{eqnarray}
\end{enumerate}
\end{Lemma}
\begin{proof}
We let ${}^{\sigma}\bar{b}_{i,j}^{(r)}$ denote ${}^{\sigma}{b}_{i,j}^{(r)}+F^{L}_{r-1}\Y_n(\sigma)$.
Following the proof of  \cite[(5.11)]{LWZ23} verbatim we obtain the relation
\begin{equation}
[{}^{\sigma}\bar{b}_{i,j}^{(r)},{}^{\sigma}\bar{b}_{k,l}^{(s)}]=\delta_{j,k}{}^{\sigma}\bar{b}_{i,l}^{(r+s)}-\delta_{l,i} {}^{\sigma}\bar{b}_{k,j}^{(r+s)}-(-1)^r\delta_{k,i}{}^{\sigma}\bar{b}_{j,l}^{(r+s)}+\delta_{l,j}(-1)^{(r)}{}^{\sigma}\bar{b}_{k,i}^{(r+s)}
\end{equation}
and hence the map in (1) is a homomorphism of algebras. It is easy to see that $\cc_n(\sigma)^{\tau}$ has a basis consisting of $e_{i,j}z^r+(-1)^{r-1}e_{j,i}z^r \, i>j,r \geq s_{i,j}$  together with $e_{i,i}z^{2r+1}$ and hence the ordered monomials in these elements form a basis for $U(\gl_n[z]^{\tau})$. By Theorem \ref{TwistedshiftedPBW} the ordered monomials in the elements ${}^{\sigma}b_{i,j}^{(r)} \, i>j, \ r \geq s_{i,j}$  together with $b_{i,i}^{(2r+1)}$ form a basis for $\Y_n(\sigma)$ and hence our map is an isomorphism.
For part  (2) using the definition of  the twisted Yangian as in \cite[Definition 2.1]{LWZ23} we define a filtration by putting the generators $s_{i,j}^{(r)}$ in degree $r$.
By \cite[Theorem 2.4.3]{Mo07} we see that the associated graded algebra is commutative with bracket in degree $-1$. Using  \cite[(3.1), (3.12)-(3.13)]{LWZ23} it follows that this filtration lines up with the canonical filtration on $\Y_n$ and hence  $\gr_C \Y_n$ is commutative. The map defined in Theorem \ref{TwistedshiftedPBW} is a filtered injection and hence also $\gr_C \Y_n(\sigma)$ is commutative.
 The current theorem then follows by part (1).
\end{proof}

Before we proceed we take a brief digression to discuss presentations of Poisson algebras. The free Poisson algebra generated by a set $X$ is the symmetric algebra $S(L_X)$ of the free Lie algebra $L_X$ generated by the set $X$. A {\it Poisson presentation} of a Poisson algebra $A$ is an exact sequence of the form
$$I \to S(L_X) \to A \to 0$$
where $I \subseteq S(L_X)$ is a Poisson ideal. The generators of the Poisson ideal will be be referred to as {\it Poisson relations}. In the sequel it will be important to distinguish between between generators of $A$ viewed as a commutative algebra and as a Poisson algebra, and so we emphasise this terminology whenever any ambiguity is possible. We refer the reader to \cite[\textsection 3.1]{To23} for slightly more detail.

\begin{Proposition}
\label{P:someprop}
Let $A=(c_{ij})_{1\leq i,j\leq n-1}$ be the Cartan matrix of type ${\sf A}_{n-1}$, and set $h_{i,-1}=1$, $c_{0j}=-\delta_{j,1}$. Then $\gr_C \mathcal{Y}_n(\sigma)$ is Poisson generated by $h_{i,r}$, $ b_{j,s}$, for $0\leq i<n$, $1\leq j<n$ and $r\in\mathbb{N}, s \geq s_{i+1,i}$ subject only to the following relations:
\begin{align}
\{ h_{i,r}, h_{j,s} \}&=0, \qquad  h_{i,2r}=0,
\label{hhN2}\\	
\{ h_{i,r+1}, b_{j,s}\}-\{ h_{i,r-1}, b_{j,s+2} \}&=2 c_{ij}h_{i,r-1}b_{j,s+1},\label{hbN2}\\
 \{b_{i,r+1}, b_{j,s}\}-\{b_{i,r}, b_{j,s+1}\}&=c_{ij}b_{i,r}b_{j,s}-2\delta_{ij}(-1)^r h_{i,r+s+1},\label{bbN21}\\
\{b_{i,r},b_{j,s}\}&=0,\qquad  \text{ for }|i-j|>1,\label{bbN22}\\
\mathrm{Sym}_{k_1,k_2}\{b_{i,k_1},\{b_{i,k_2},b_{j,r}\} \} &=(-1)^{k_1}\{h_{i,k_1+k_2+1},b_{j,r-1}\},
\quad \text{ for } c_{ij}=-1.\label{serreN2}
\end{align}
\end{Proposition}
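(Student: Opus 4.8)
The plan is to derive the Poisson presentation of $\gr_C \mathcal{Y}_n(\sigma)$ directly from the associative presentation of $\mathcal{Y}_n(\sigma)$ by taking the semiclassical limit of the defining relations. Recall from Lemma~\ref{TwistedYPBWS}(2) that $\gr_C \mathcal{Y}_n(\sigma)$ is commutative, with Poisson bracket of degree $-1$ with respect to the canonical grading. First I would check that the images $\bar h_{i,r}, \bar b_{j,s}$ of the generators \eqref{e:twisted shifted generators} in $\gr_C \mathcal{Y}_n(\sigma)$ generate it as a Poisson algebra: this is immediate from Theorem~\ref{TwistedshiftedPBW}(2), since those generators give rise to a PBW basis and the Poisson subalgebra they generate contains all the ${}^\sigma b_{i,j}^{(r)}$ by the very definition \eqref{e:twisted shifted extra gens} (the commutator bracket there descends to the Poisson bracket). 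Then I would obtain the relations \eqref{hhN2}--\eqref{serreN2} by passing \eqref{hhN}--\eqref{serreN} to the associated graded: in the canonical filtration $h_{i,r}$ and $b_{i,r}$ sit in degree $r+1$, so a commutator $[h_{i,r}, b_{j,s}]$ lies in degree $r + s + 1$, and the top-degree component of $[\,\cdot\,,\cdot\,]$ is the Poisson bracket $\{\,\cdot\,,\cdot\,\}$. For instance \eqref{hbN} has left-hand side in degree $(r+2)+(s+1) = r+s+3$ while the term $c_{ij}[h_{i,r-1},b_{j,s+1}]_- = c_{ij}(h_{i,r-1}b_{j,s+1} + b_{j,s+1}h_{i,r-1})$ has commutative degree $(r) + (s+2) = r+s+2$ plus a commutator correction of strictly lower degree; thus in the associated graded the anticommutator becomes $2 c_{ij}\, \bar h_{i,r-1}\bar b_{j,s+1}$, and the last term $\tfrac14 c_{ij}^2 [h_{i,r-1},b_{j,s}]$ is a commutator of degree $\le r+s$, hence dies, yielding \eqref{hbN2}. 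The same degree bookkeeping turns \eqref{bbN} into \eqref{bbN21} (the anticommutator $\tfrac{c_{ij}}{2}[b_{i,r},b_{j,s}]_-$ becomes $c_{ij}\bar b_{i,r}\bar b_{j,s}$, the $h$-term survives since it already has the right degree $r+s+1$, shifted up by one to $r+s+2$ to match), kills the Serre-type commutator corrections to produce \eqref{serreN2} and \eqref{bbN22}, and trivially gives \eqref{hhN2}.

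The substantive point — and the step I expect to be the main obstacle — is showing that \eqref{hhN2}--\eqref{serreN2} are a \emph{complete} set of Poisson relations, i.e.\ that the surjection $S(L_X)/I \onto \gr_C \mathcal{Y}_n(\sigma)$ from the free Poisson algebra modulo the Poisson ideal $I$ generated by the displayed relations is injective. The standard strategy, following the template of \cite[Theorem~2.1]{BK06} and the argument already used in Theorem~\ref{TwistedshiftedPBW}, is a dimension/PBW count: one shows that $S(L_X)/I$ is spanned by ordered monomials in a suitable family of elements — namely the Poisson analogues ${}^\sigma \bar b_{i,j}^{(s)}$ defined by the recursion \eqref{e:twisted shifted extra gens} with commutators replaced by Poisson brackets, together with the $\bar h_{i,2r+1}$ — and then invokes the PBW theorem for $\gr_C \mathcal{Y}_n(\sigma)$ (equivalently the isomorphism $S(\cc_n(\sigma)^\tau)\cong \gr_C\mathcal{Y}_n(\sigma)$ of Lemma~\ref{TwistedYPBWS}(2)) to conclude that these monomials are linearly independent in the quotient, forcing the surjection to be an isomorphism. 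The spanning argument is the crux: one must verify, working purely with the Poisson relations \eqref{hhN2}--\eqref{serreN2}, that every Poisson monomial can be rewritten as a linear combination of the ordered PBW monomials. This is precisely the content of running the proof of \cite[Theorem~5.1]{LWZ23} "verbatim" — but now in the commutative/Poisson setting, where each associative computation has a direct Poisson counterpart obtained by the same semiclassical degree truncation. Because the Poisson relations are if anything simpler than their associative counterparts (all anticommutators have collapsed to ordinary products, and all lower-order commutator corrections have vanished), the combinatorial reductions go through unchanged; the only care needed is to track that no genuinely new relation is required, which is guaranteed by comparing with the associative spanning argument already established in Theorem~\ref{TwistedshiftedPBW}.

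Concretely, I would organize the write-up as follows: (i) record that the map $S(L_X) \to \gr_C \mathcal{Y}_n(\sigma)$ is well-defined and surjective, citing Theorem~\ref{TwistedshiftedPBW}(2); (ii) verify relations \eqref{hhN2}--\eqref{serreN2} hold in $\gr_C \mathcal{Y}_n(\sigma)$ by the degree computations sketched above, applied term-by-term to \eqref{hhN}--\eqref{serreN}; (iii) define ${}^\sigma \bar b_{i,j}^{(s)} \in S(L_X)/I$ by the Poisson recursion mirroring \eqref{e:twisted shifted extra gens}, and show — by transcribing the relevant steps of \cite[\textsection 5.4]{LWZ23} into the Poisson setting — that ordered monomials in $\{\bar h_{i,2r+1}, {}^\sigma\bar b_{j,k}^{(s)}\}$ span $S(L_X)/I$; (iv) conclude by the PBW theorem of Lemma~\ref{TwistedYPBWS}(2) that the surjection is injective, hence an isomorphism, proving \eqref{hhN2}--\eqref{serreN2} is a complete Poisson presentation. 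Step (iii) carries essentially all of the technical weight, but since it is a mechanical shadow of an argument already in the literature, it should not present any conceptual difficulty.
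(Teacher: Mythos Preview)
Your proposal is correct and follows essentially the same strategy as the paper's proof: define the abstract Poisson algebra $\widehat y_n(\sigma)$ by the stated generators and relations, observe that the graded versions of \eqref{hhN}--\eqref{serreN} give a surjection $\widehat y_n(\sigma)\twoheadrightarrow \gr_C\mathcal Y_n(\sigma)$, build the elements ${}^\sigma b_{i,j}^{(r)}$ inside $\widehat y_n(\sigma)$ via the Poisson recursion, use the argument of \cite[\S5.4]{LWZ23} (the paper specifically passes through a loop filtration on $\widehat y_n(\sigma)$ and cites \cite[(5.11)]{LWZ23}) to show the ordered monomials span, and then invoke Lemma~\ref{TwistedYPBWS}(2) for linear independence on the target. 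One small arithmetic slip: the left-hand side of \eqref{hbN}, being a commutator, sits in canonical filtered degree $(r+2)+(s+1)-1=r+s+2$, not $r+s+3$, which then matches the degree $r+s+2$ of the anticommutator term on the right --- your conclusion about which terms survive is unaffected.
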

\begin{proof}
Let $\widehat{y}_n(\sigma)$ be the algebra with generators and relations as in the statement of the Proposition. From relations \eqref{hhN}--\eqref{serreN} is follows that there is a canonical surjection $\widehat{y}_n(\sigma)$. We inductively define elements in  $\widehat{y}_n(\sigma)$
\begin{eqnarray}
\begin{array}{ll}
{}^\sigma b_{i+1,i}^{(r)} := b_{i,r} \in \widehat{y}_n(\sigma) \ \ \text{ for } \ \ 1< i \le n, \ r \geq s_{i+1,i}\\
{}^\sigma b_{i+1, j}^{(r)}= \{{}^\sigma b_{i+1, i}^{(s_{i,i+1})}, {}^\sigma b_{i,j}^{(r-s_{i+1,i})}\} \in \widehat{y}_n(\sigma) \ \ \text{ for } \ \  1\le j < i \le n, \ r \geq s_{i+1,j} \\
{}^{\sigma}b_{i,i}^{(r)}:=h_{0,r}+\dots+h_{i-1,r} \in \widehat{y}_n(\sigma) \ \ \text{ for } \ \ 1\le i \le n, \ r \geq 0
\end{array}
\end{eqnarray}
and let ${}^{\sigma}b_{i,j}^{(r)}:=(-1)^{(r-1)}{}^{\sigma}b_{j,i}^{(r)}$ for $i < j$. We also define a loop filtration on $\widehat{y}_n(\sigma)$ by putting the generators in degree $r$ and denote the counterparts of the $^\sigma b_{i,j}^{(r)}$ in $\gr_L\widehat{y}_n(\sigma)$ by $^\sigma \bar{b}_{i,j}^{(r)}$.
 Once more following the proof of \cite[(5.11)]{LWZ23} verbatim we get 
 \begin{equation}
\{{}^{\sigma}\bar{b}_{i,j}^{(r)},{}^{\sigma}\bar{b}_{k,l}^{(s)}\}=\delta_{j,k}{}^{\sigma}\bar{b}_{i,l}^{(r+s)}-\delta_{l,i} {}^{\sigma}\bar{b}_{k,j}^{(r+s)}-(-1)^r\delta_{k,i}{}^{\sigma}\bar{b}_{j,l}^{(r+s)}+\delta_{l,j}(-1)^{(r)}{}^{\sigma}\bar{b}_{k,i}^{(r+s)}
\end{equation}
so that $\widehat{y}_n(\sigma)$ is spanned by monomials in the elements $h_{i,2r+1},{}^{\sigma}b_{i,j}^{(r)} \in \widehat{y}_n(\sigma)$. By Lemma \ref{TwistedYPBWS} $\gr_C \mathcal{Y}_{n}(\sigma)$ is a polynomial ring in the images of these maps under the canonical surjection so that the canonical surjection is an isomorphism. 
\end{proof}

\subsection{The Dirac reduction of a shifted twisted Yangian}

We continue to fix $n > 0$, choose an $n\times n$ symmetric shift matrix $\sigma$ and fix $\ve \in \{\pm 1\}$. The {\em (semiclassical) shifted Yangian} $y_n(\sigma)$ is the Poisson algebra with Poisson generators
\begin{equation}\label{e:Ybasicgens}
\begin{array}{c} \{d_i^{(r)} \mid 1\leq i \leq n, \ 0 < r\} \cup \{e_{i}^{(r)} \mid 1\leq i < n, \ s_{i,i+1} < r \} \\ \cup \, \{f_{i}^{(r)} \mid 1\leq i < n, \ s_{i+1,i} < r\} \end{array}
\end{equation}
and Poisson relations \cite[(3.37)-(3.48)]{To23}. Following \cite[(3.58)]{To23} we introduce an involution on $y_n(\sigma)$
\begin{eqnarray}
\label{e:tauon1}
\begin{array}{rcl}
\tau(d_i^{(r)})& := & (-1)^{r} d_{i}^{(r)}; \\ [3pt]
\tau(e_{i}^{(r)}) & := & (-1)^{r + s_{i,i+1}} f_i^{(r)}; \\ [3pt]
\tau(f_i^{(r)}) & := & (-1)^{r + s_{i+1, i}} e_i^{(r)} .
\end{array}
\end{eqnarray}

\begin{Theorem}
\label{T:presentDirac}
Let $\sigma$ be a symmetric shift matrix, let $y_n(\sigma)$ denote the corresponding semiclassical shifted Yangian and let $\tau$ denote the automorphism of $y_n(\sigma)$ introduced in \eqref{e:tauon1}. The Dirac reduction $R(y_n(\sigma), \tau)$ is Poisson generated by elements
\begin{eqnarray}
\label{e:DYgens}
\{\eta_i^{(2r)} \mid 1\le i \le n, \ 0 < r \} \cup \{ \theta_i^{(r)} \mid 1\le i < n, \ s_{i,i+1} < r \}
\end{eqnarray}
together with the following relations
\begin{eqnarray}
\label{e:dyrel1}
& & \{\eta_i^{(2r)}, \eta_j^{(2s)}\} = 0 \\ 
\label{e:dyrel2}
& &\big\{\eta_i^{(2r)}, \theta_j^{(s)}\big\} = (\delta_{i,j} - \delta_{i,j+1}) \sum_{t=0}^{r-1} \eta_i^{(2t)} \theta_j^{(2r+s -1- 2t)}
\end{eqnarray}
\begin{eqnarray}
\label{e:dyrel34}
\{\theta_i^{(r)}, \theta_i^{(s)}\} = \frac{1}{2} \sum_{t=r}^{s-1} \theta_i^{(t)} \theta_i^{(r+s-1-t)} + (-1)^{s_{i,i+1}}\varpi_{s,r} \sum_{t=0}^{(r+s-1)/2} \eta_{i+1}^{(r+s-1-2t)} \teta_{i}^{(2t)} & \text{ for } & r < s.\\
\label{e:dyrel5}
\big\{\theta_i^{(r+1)}, \theta_{i+1}^{(s)}\big\} - \big\{\theta_i^{(r)}, \theta_{i+1}^{(s+1)}\big\} = \frac{1}{2}\theta_i^{(r)} \theta_{i+1}^{(s)} & &
\end{eqnarray}
\begin{eqnarray}
\label{e:dyrel6}
\big\{\theta_i^{(r)}, \theta_j^{(s)}\big\} = 0 & & \text{ for } |i - j| > 1\\
\label{e:dyrel7}
\Big\{\theta_i^{(r)}, \big\{\theta_i^{(s)}, \theta_j^{(t)}\big\}\Big\} + \Big\{\theta_i^{(s)}, \big\{\theta_i^{(r)}, \theta_j^{(t)}\big\}\Big\} = 0 & & \text{ for } |i - j| =1 \text{ and } r + s \text{ odd}
\end{eqnarray}
\begin{eqnarray}
\label{e:dyrel8}
& &\Big\{\theta_i^{(r)}, \big\{\theta_i^{(s)}, \theta_j^{(t)}\big\}\Big\} + \Big\{\theta_i^{(s)}, \big\{\theta_i^{(r)}, \theta_j^{(t)}\big\}\Big\} =  \\
& & \nonumber  \ \ \ \ \ \ \ \ \ \  2(-1)^{s+s_{i,i+1}-1} (\delta_{i,j+1}+\delta_{i+1,j}) \sum_{m_1=0}^{m-1} \sum_{m_2=0}^{m-m_1-1} \eta_{i+1}^{(2m_2)}  \teta_i^{(2m_1)} \theta_j^{(2(m_1-m_2) + t)} \\
& & \nonumber \ \ \ \ \ \ \ \ \ \ \ \ \ \ \ \ \ \ \ \ \ \ \ \ \ \ \ \ \ \ \text{ for } |i - j| =1 \text{ and } r + s = 2m \text{ even}
\end{eqnarray}
where we adopt the convention $\eta_i^{(0)} = \teta_i^{(0)} = 1$ and the elements $\teta_i^{(2r)} $ are defined via the recursion
\begin{eqnarray}
\label{e:tetatwisteddefinition}
\teta_i^{(2r)} := -\sum_{t=1}^r \eta_i^{(2t)} \teta_i^{(2r-2t)}.
\end{eqnarray}
\end{Theorem}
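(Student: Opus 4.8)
The plan is to prove Theorem~\ref{T:presentDirac} by the same strategy used for Theorem~A: write down an explicit Poisson map, check it respects the relations, and then compare PBW bases on both sides. Concretely, I would first use Theorem~\cite[Theorem~2.6]{To23} (the Dirac reduction theorem recalled above) together with Theorem~A, but since Theorem~A itself is proven \emph{via} a generators-and-relations argument, the cleaner route is to work directly. Recall from \cite[Theorem~3.4]{To23} that $R(y_n(\sigma),\tau)$ has a PBW basis. The elements $\eta_i^{(2r)}$ and $\theta_i^{(r)}$ will be defined as the images in $R(y_n(\sigma),\tau)=y_n(\sigma)^\tau/I_\tau^\tau$ of the $\tau$-symmetrised (resp. $\tau$-antisymmetrised) combinations of the basic generators $d_i^{(r)}, e_i^{(r)}, f_i^{(r)}$ in \eqref{e:Ybasicgens}; the parity shifts $(-1)^r$, $(-1)^{r+s_{i,i+1}}$ in \eqref{e:tauon1} dictate which combination survives, and this forces the superscripts $2r$ on the $\eta$'s. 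The auxiliary elements $\teta_i^{(2r)}$ defined by the recursion \eqref{e:tetatwisteddefinition} are the ``inverse series'' coefficients, so that the generating series $\widetilde\eta_i(u)=\sum_r \teta_i^{(2r)}u^{-2r}$ satisfies $\eta_i(u)\widetilde\eta_i(u)=1$; these appear on the right-hand sides of \eqref{e:dyrel34} and \eqref{e:dyrel8} exactly as they do in the Brundan--Kleshchev-style relations for $y_n(\sigma)$.

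The main computational step is to translate the defining Poisson relations \cite[(3.37)--(3.48)]{To23} of $y_n(\sigma)$ into relations among the $\eta$'s and $\theta$'s. I would proceed relation-family by relation-family: the $d$--$d$ relations give \eqref{e:dyrel1}; the $d$--$e$ and $d$--$f$ relations, after symmetrising and using $\tau(e)=\pm f$, collapse to the single family \eqref{e:dyrel2} (the sign $\delta_{i,j}-\delta_{i,j+1}$ records the fact that $d_i$ pairs oppositely with $e$ and with $f$); the $e$--$f$ relations produce the ``$\eta$-heavy'' term in \eqref{e:dyrel34} together with the quadratic $\theta\theta$ term; the $e$--$e$ and $f$--$f$ relations give the ladder relation \eqref{e:dyrel5} and the commuting relation \eqref{e:dyrel6}; and the Serre-type relations \eqref{e:dyrel7}, \eqref{e:dyrel8} come from the higher Serre relations of $y_n(\sigma)$, with the parity of $r+s$ determining whether the right-hand side vanishes (odd case) or picks up the $\eta$-correction (even case). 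The bookkeeping of signs $(-1)^{s_{i,i+1}}$ and the $\varpi_{s,r}$ factor in \eqref{e:dyrel34} is delicate but mechanical once one fixes the convention for which $\tau$-eigenvector is called $\theta_i^{(r)}$.

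Having produced the surjection $\widehat{R}\twoheadrightarrow R(y_n(\sigma),\tau)$ from the abstract Poisson algebra $\widehat R$ on generators \eqref{e:DYgens} with relations \eqref{e:dyrel1}--\eqref{e:dyrel8}, I would close the argument by a spanning-plus-independence count. Using the relations \eqref{e:dyrel2}, \eqref{e:dyrel5} one can, exactly as in Brundan--Kleshchev, define higher ``ladder'' elements $\theta_{i,j}^{(r)}$ inside $\widehat R$ and show by induction on the length of the gap $|i-j|$ that ordered monomials in the $\eta_i^{(2r)}$ and the $\theta_{i,j}^{(r)}$ span $\widehat R$; this is the Poisson analogue of the span statement in the proof of Theorem~\ref{TwistedshiftedPBW}(2). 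On the other side these same monomials are, by \cite[Theorem~3.4]{To23}, a basis of $R(y_n(\sigma),\tau)$, so the surjection is an isomorphism. The hard part will be the relation-matching in the middle paragraph: the original relations \cite[(3.37)--(3.48)]{To23} are stated for the (un-Dirac-reduced) shifted Yangian, and extracting from them clean closed-form expressions for the brackets of the \emph{symmetrised} generators --- in particular getting the double sum in \eqref{e:dyrel8} and the $\widetilde\eta$-recursion to appear correctly --- requires careful manipulation of generating functions and of the $\tau$-action on products, rather than any single clever idea. Everything else is either a direct appeal to \cite{To23} or a routine PBW comparison.
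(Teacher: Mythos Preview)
Your strategy is sound and would work, but it reproves from scratch what is already available in the literature. The paper's own argument is much shorter: relations \eqref{e:dyrel1}--\eqref{e:dyrel7} are quoted verbatim from \cite[Theorem~3.7]{To23}, and the only new content is showing that \eqref{e:dyrel8} is equivalent to \cite[(3.74)]{To23}. That equivalence is a pure reindexing of the double sum (substitute $q=m-m_1-1$ in the first sum and swap the order of summation), together with the observation that relation \eqref{e:dyrel34} corrects a sign typo in \cite[(3.70)]{To23} by replacing $\varpi_{r,s}$ with $\varpi_{s,r}$. So the entire proof is a few lines of index manipulation rather than a full generators-and-relations derivation.

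In other words, you have correctly identified the architecture of the original argument in \cite{To23} --- define $\eta,\theta$ as $\tau$-symmetrised generators, translate the shifted-Yangian relations, build $\widehat R$, compare PBW bases --- but that work has already been carried out there. The present theorem is a mild restatement, not a new result, and the paper treats it as such. Your plan would recover the result independently, which is fine, but you should be aware that the intended proof simply cites \cite[Theorem~3.7]{To23} and checks the cosmetic rewriting of one relation.
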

\begin{proof}
\eqref{e:dyrel1}-\eqref{e:dyrel7} are as in \cite[Theorem~3.7]{To23}. We claim that \eqref{e:dyrel8} is equivalent to \cite[(3.74)]{To23} which says 
\begin{eqnarray*}
& &\Big\{\theta_i^{(r)}, \big\{\theta_i^{(s)}, \theta_j^{(t)}\big\}\Big\} + \Big\{\theta_i^{(s)}, \big\{\theta_i^{(r)}, \theta_j^{(t)}\big\}\Big\} =  \\
& & \nonumber  \ \ \ \ \ \ \ \ \ \  2(-1)^{s+s_{i,i+1}-1} \delta_{i,j+1} \sum_{m_1=0}^{m-1} \sum_{m_2=0}^{m_1} \eta_{i+1}^{(2(m-m_1-1))}  \teta_i^{(2m_2)} \theta_j^{(2(m_1-m_2) + t)} \\
& & \nonumber \ \ \ \ \ \ \ \ \ \ \ \ \ \ \ \ \ \ \ \ + 2(-1)^{s+s_{i,i+1}-1} \delta_{i+1,j} \sum_{m_1=0}^{m-1} \sum_{m_2=0}^{m-m_1-1} \teta_i^{(2m_1)}\eta_{i+1}^{(2m_2)} \theta_j^{(2(m-m_1-m_2 - 1) + t)} \\
& & \nonumber \ \ \ \ \ \ \ \ \ \ \ \ \ \ \ \ \ \ \ \ \ \ \ \ \ \ \ \ \ \ \text{ for } |i - j| =1 \text{ and } r + s = 2m \text{ even}
\end{eqnarray*}
Making the substitution $q:=m-m_1-1$ in the first sum this now reads as 
\begin{eqnarray*}
& &\Big\{\theta_i^{(r)}, \big\{\theta_i^{(s)}, \theta_j^{(t)}\big\}\Big\} + \Big\{\theta_i^{(s)}, \big\{\theta_i^{(r)}, \theta_j^{(t)}\big\}\Big\} =  \\
& & \nonumber  \ \ \ \ \ \ \ \ \ \  2(-1)^{s+s_{i,i+1}-1} \delta_{i,j+1} \sum_{q=0}^{m-1} \sum_{m_2=0}^{m-q-1} \eta_{i+1}^{(2(q))}  \teta_i^{(2m_2)} \theta_j^{(2(m-q-1-m_2) + t)} \\
& & \nonumber \ \ \ \ \ \ \ \ \ \ \ \ \ \ \ \ \ \ \ \ + 2(-1)^{s+s_{i,i+1}-1} \delta_{i+1,j} \sum_{m_1=0}^{m-1} \sum_{m_2=0}^{m-m_1-1} \teta_i^{(2m_1)}\eta_{i+1}^{(2m_2)} \theta_j^{(2(m-m_1-m_2 - 1) + t)} \\
& & \nonumber \ \ \ \ \ \ \ \ \ \ \ \ \ \ \ \ \ \ \ \ \ \ \ \ \ \ \ \ \ \ \text{ for } |i - j| =1 \text{ and } r + s = 2m \text{ even}
\end{eqnarray*}
now changing the order of summation in the first sum this reads as 
\begin{eqnarray*}
& &\Big\{\theta_i^{(r)}, \big\{\theta_i^{(s)}, \theta_j^{(t)}\big\}\Big\} + \Big\{\theta_i^{(s)}, \big\{\theta_i^{(r)}, \theta_j^{(t)}\big\}\Big\} =  \\
& & \nonumber  \ \ \ \ \ \ \ \ \ \  2(-1)^{s+s_{i,i+1}-1} \delta_{i,j+1} \sum_{m_2=0}^{m-1} \sum_{q=0}^{m-m_2-1} \eta_{i+1}^{(2(q))}  \teta_i^{(2m_2)} \theta_j^{(2(m-q-1-m_2) + t)} \\
& & \nonumber \ \ \ \ \ \ \ \ \ \ \ \ \ \ \ \ \ \ \ \ + 2(-1)^{s+s_{i,i+1}-1} \delta_{i+1,j} \sum_{m_1=0}^{m-1} \sum_{m_2=0}^{m-m_1-1} \teta_i^{(2m_1)}\eta_{i+1}^{(2m_2)} \theta_j^{(2(m-m_1-m_2 - 1) + t)} \\
& & \nonumber \ \ \ \ \ \ \ \ \ \ \ \ \ \ \ \ \ \ \ \ \ \ \ \ \ \ \ \ \ \ \text{ for } |i - j| =1 \text{ and } r + s = 2m \text{ even}
\end{eqnarray*}
which is as claimed.
\end{proof}
\begin{Remark}
Relation \eqref{e:dyrel34} is actually different from \cite[(3.70)]{To23} and we have replaced $\varpi_{r,s}$ with $\varpi_{s,r}$. This is due to the fact that \cite[(3.70)]{To23} actually should read as our relation \eqref{e:dyrel34} as is evident from the proof of \cite[Thm. 3.7]{To23} but a sign mistake appears in the relation \cite[(3.70)]{To23} which is fixed by replacing  $\varpi_{r,s}$ with $\varpi_{s,r}$.
\end{Remark}

\subsection{Proof of Theorem A}
\label{ss:Proof of A}
We now introduce certain elements in $R(y_n(\sigma),\tau)$ whose role will become clear later on. Let
\begin{align}
\bar{\eta}^{(2r)}_1&:={\eta}^{(2r)}_1 \\
\bar{\eta}^{(2r)}_i&:= \sum_{t=0}^{r}\tilde{\eta}_{i-1}^{(2t)}\eta_{i}^{(2r-2t)} \, \textnormal{for } i >1
\end{align}
\begin{Remark}
\label{etabargen}
Define the following formal power series
 \begin{align}
 \eta_{i}(u)&:=\sum_{r \geq 0}  \eta_{i}^{2r}u^{-2r}\\
 \tilde{\eta}_{i}(u)&:=\sum_{r \geq 0}  \tilde{\eta_{i}}^{2r}u^{-2r}
 \end{align}
Note that for $i >1$ the $\bar{\eta}^{(2r)}_i$ are the coefficients of $t^{-2r}$ of the power series $ \bar{\eta}_i(u):=\tilde{\eta}_{i-1}(u) \eta_{i}(u)$. By the recursion \eqref{e:tetatwisteddefinition} we see that
$ \tilde{\eta}_{i}(u) \eta_{i}(u)=1$. In particular this gives $\eta_{i}(u)= \bar{\eta}_{1}(u) \dots \bar{\eta}_{i}(u) $ so that we can recover monomials in the elements $\eta_{i}^{(2r)}, 1 \leq i \leq n, \ r \geq 0$ from monomials in the elements $\bar{\eta}_{i}^{(2r)}, 1 \leq i \leq n, \ r \geq 0$ and vice versa. In particular, the elements
\begin{eqnarray}
\label{e:DYgens}
\{\bar\eta_i^{(2r)} \mid 1\le i \le n, \ 0 < r \} \cup \{ \theta_i^{(r)} \mid 1\le i < n, \ s_{i,i+1} < r \}
\end{eqnarray}
are a set of Poisson generators of $R(y_n(\sigma), \tau)$, thanks to Theorem~\ref{T:presentDirac}.
\end{Remark}
The following Lemma will prove useful later on.
\begin{Lemma}
The following equalities hold in $R(y_n(\sigma),\tau)$
\begin{align}
\label{etabrel1}
\{\bar{\eta}^{(2r)}_i,\bar{\eta}^{(2s)}_j \}&=0 \\
\label{etabrel2}
\{ \tilde{\eta}^{(2r)}_i,\theta_j^{(s)}\}&= (\delta_{i,j+1} - \delta_{i,j}) \sum_{t=0}^{r-1} \tilde{\eta}_i^{(2t)} \theta_j^{(2r+s -1- 2t)} \\
\label{etabrel3}
\{ \bar{\eta}^{(2r)}_i,\theta_j^{(s)} \}&= (\delta_{i-1,j+1}+\delta_{i,j}-2\delta_{i,j+1})\sum_{t=0}^{r-1}\bar{\eta}_i^{(2t)}\theta_j^{(2r+s-1-2t)} \textnormal{for } i >1
\end{align}
\end{Lemma}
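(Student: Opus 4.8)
The plan is to deduce all three identities from relations \eqref{e:dyrel1}, \eqref{e:dyrel2} of Theorem~\ref{T:presentDirac}, the recursion \eqref{e:tetatwisteddefinition}, and the generating-function facts recorded in Remark~\ref{etabargen}: namely $\tilde{\eta}_i(u)\eta_i(u) = 1$ and $\bar{\eta}_i(u) = \tilde{\eta}_{i-1}(u)\eta_i(u)$ for $i>1$ (with $\bar{\eta}_1(u) = \eta_1(u)$). Throughout one uses that $R(y_n(\sigma),\tau)$ is a \emph{commutative} Poisson algebra, so $\{-,a\}$ is a derivation of the commutative product for every $a$, and that $\tilde{\eta}_i^{(0)} = \eta_i^{(0)} = \bar{\eta}_i^{(0)} = 1$.

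For \eqref{etabrel1}: relation \eqref{e:dyrel1} says the $\eta_i^{(2r)}$ pairwise Poisson-commute. Since by \eqref{e:tetatwisteddefinition} each $\tilde{\eta}_i^{(2r)}$ is a polynomial in the $\eta_i^{(2t)}$ with $t\le r$, the Leibniz rule forces $\{\tilde{\eta}_i^{(2r)},\eta_j^{(2s)}\}=0$ and hence $\{\tilde{\eta}_i^{(2r)},\tilde{\eta}_j^{(2s)}\}=0$ as well; so any polynomial in the $\eta$'s and $\tilde{\eta}$'s Poisson-commutes with any other such polynomial. As each $\bar{\eta}_i^{(2r)}$ is such a polynomial (a coefficient of $\tilde{\eta}_{i-1}(u)\eta_i(u)$, or of $\eta_1(u)$ when $i=1$), \eqref{etabrel1} follows.

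For \eqref{etabrel2} and \eqref{etabrel3} I would pass to generating functions. Summing \eqref{e:dyrel2} over $r$ and substituting $p = r-t$ turns the double sum $\sum_r\sum_{t=0}^{r-1}$ into a product of series, giving the compact identity $\{\eta_i(u),\theta_j^{(s)}\} = (\delta_{i,j}-\delta_{i,j+1})\,\eta_i(u)\,\vartheta_j^{(s)}(u)$, where $\vartheta_j^{(s)}(u) := \sum_{p\ge 1} u^{-2p}\theta_j^{(2p+s-1)}$. Applying the derivation $\{-,\theta_j^{(s)}\}$ to $\tilde{\eta}_i(u)\eta_i(u)=1$ and using commutativity gives $\{\tilde{\eta}_i(u),\theta_j^{(s)}\} = -\tilde{\eta}_i(u)^2\{\eta_i(u),\theta_j^{(s)}\} = (\delta_{i,j+1}-\delta_{i,j})\,\tilde{\eta}_i(u)\,\vartheta_j^{(s)}(u)$; extracting the coefficient of $u^{-2r}$ yields \eqref{etabrel2}. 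Finally, applying the Leibniz rule to $\bar{\eta}_i(u) = \tilde{\eta}_{i-1}(u)\eta_i(u)$ and inserting the two formulas just obtained gives $\{\bar{\eta}_i(u),\theta_j^{(s)}\} = \bigl[(\delta_{i-1,j+1}-\delta_{i-1,j}) + (\delta_{i,j}-\delta_{i,j+1})\bigr]\bar{\eta}_i(u)\,\vartheta_j^{(s)}(u)$; since $\delta_{i-1,j} = \delta_{i,j+1}$, the bracketed scalar equals $\delta_{i-1,j+1}+\delta_{i,j}-2\delta_{i,j+1}$, and extracting the coefficient of $u^{-2r}$ yields \eqref{etabrel3}.

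The only real bookkeeping is the index shift $p = r-t$ repackaging \eqref{e:dyrel2} as a product of series, together with tracking the ``$-1$'' in the upper index of $\theta_j$; once that generating-function form is in hand everything else is a mechanical use of the Leibniz rule and of the inversion $\tilde{\eta}_i(u) = \eta_i(u)^{-1}$, so I do not anticipate a genuine obstacle. (Alternatively \eqref{etabrel2} could be obtained by a direct induction on $r$ from \eqref{e:tetatwisteddefinition} and \eqref{e:dyrel2}, but the generating-function route is cleaner.)
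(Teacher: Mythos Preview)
Your argument is correct. For \eqref{etabrel1} you proceed exactly as the paper does. For \eqref{etabrel2} and \eqref{etabrel3} you take a genuinely different route: you repackage \eqref{e:dyrel2} as the generating-function identity $\{\eta_i(u),\theta_j^{(s)}\}=(\delta_{i,j}-\delta_{i,j+1})\,\eta_i(u)\,\vartheta_j^{(s)}(u)$ and then derive \eqref{etabrel2} from the inversion $\tilde{\eta}_i(u)=\eta_i(u)^{-1}$ and \eqref{etabrel3} from the Leibniz rule applied to $\bar{\eta}_i(u)=\tilde{\eta}_{i-1}(u)\eta_i(u)$. The paper instead cites \cite[(3.56)]{To23} for \eqref{etabrel2} and for \eqref{etabrel3} expands $\{\tilde{\eta}_{i-1}^{(2t)}\eta_i^{(2r-2t)},\theta_j^{(s)}\}$ directly, obtaining a double sum which it then reindexes and collapses by reading off the coefficient of each $\theta_j^{(s+1+2k)}$. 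Your generating-function method avoids that index chase entirely: once the product form is established, the scalar $(\delta_{i-1,j+1}+\delta_{i,j}-2\delta_{i,j+1})$ falls out immediately and the coefficient extraction is a one-line convolution. Both arguments rest on the same inputs \eqref{e:dyrel1}, \eqref{e:dyrel2}, \eqref{e:tetatwisteddefinition}, but yours is shorter and more transparent.
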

\begin{proof}
The first one of these relations follows immediately by relation \eqref{e:dyrel1} as $\bar{\eta}^{(2r)}_i$ is polynomial in $\eta_i^{2r},\eta_{i-1}^{2s}, \ 1 \leq r,s$.
The second relation is proved just as \cite[(3.56)]{To23}. For the third relation we have
\begin{align*}
\{ \bar{\eta}^{(2r)}_i,\theta_j^{(s)} \}&=\sum_{t=0}^{r}
\{ \tilde{\eta}_{i-1}^{(2t)}{\eta}^{(2r-2t)}_i,\theta_j^{(s)} \}\\&=(\delta_{i-1,j+1}-\delta_{i-1,j})\sum_{t=1}^{r}\eta_{i}^{(2r-2t)}\sum_{t'=0}^{t-1}\tilde{\eta}_{i-1}^{(2t')}\theta_{j}^{(2t+s-1-2t')}\\
&+(\delta_{i,j}-\delta_{i,j+1})\sum_{t=0}^{r-1}\tilde{\eta}_{i-1}^{(2t)}\sum_{t'=0}^{r-t-1}{\eta}_{i}^{(2t')}\theta_{j}^{(2r-2t+s-1-2t')}
\end{align*}
where the second equality follows by \eqref{e:dyrel2} and \eqref{etabrel2}. Thus the coefficient of $\theta_j^{(s+1+2k)} \, 0 \leq k \leq r-1$ is given by $((\delta_{i-1,j+1}-\delta_{i-1,j})+(\delta_{i,j}-\delta_{i,j+1}))\sum_{w=0}^{r-j-1}\eta_{i}^{(w)}\tilde{\eta}_{i-1}^{(2r-2w-2j-2)}$
which proves the lemma. 
\end{proof}
\begin{Theorem}\label{yangianquantizes}
The semiclassical limit of the shifted twisted Yangian $y_n^{+}(\sigma):=\gr_C \Y_n(\sigma)$  is isomorphic, as a Poisson algebra, to the Dirac reduction $R(y_n(\sigma), \tau)$, with respect to the involution \eqref{e:tauon1}.
\end{Theorem}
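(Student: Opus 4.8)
The plan is to realise the isomorphism explicitly on generators and then compare Poincar\'e--Birkhoff--Witt (PBW) bases. I would use the Poisson presentation of $y_n^+(\sigma)=\gr_C\Y_n(\sigma)$ from Proposition~\ref{P:someprop} on the source side, and the presentation of $R(y_n(\sigma),\tau)$ from Theorem~\ref{T:presentDirac}, rewritten in terms of the generators $\bar{\eta}_i^{(2r)}$ of Remark~\ref{etabargen}, on the target side. The map $\Phi\colon y_n^+(\sigma)\to R(y_n(\sigma),\tau)$ is defined on Poisson generators by sending each $b_{j,s}$ to a fixed nonzero scalar multiple of $\theta_j^{(s+1)}$ and each $h_{i,2r+1}$ to a fixed nonzero scalar multiple of $\bar{\eta}_{i+1}^{(2r+2)}$; the scalars, including their possible dependence on the parities of $r$, $s$ and the entries of $\sigma$ (compare the sign conventions in \eqref{e:extra gens} and in \eqref{e:tauon1}), are to be pinned down by the relation check. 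The index shift $s\mapsto s+1$ matches the range $s\geq s_{j,j+1}$ of the $b_{j,s}$ with the range $s_{j,j+1}<s+1$ of the $\theta_j^{(s+1)}$, and both $b_{j,s}$ and $\theta_j^{(s+1)}$ (resp. $h_{i,2r+1}$ and $\bar{\eta}_{i+1}^{(2r+2)}$) are homogeneous of canonical degree $s+1$ (resp. $2r+2$), so $\Phi$ is graded, the Poisson bracket having canonical degree $-1$ on both algebras. The appearance of $\bar{\eta}_{i+1}$ rather than $\eta_{i+1}$ reflects the identity $b_{i,i}^{(r)}=h_{0,r}+\dots+h_{i-1,r}$ together with the factorisation $\eta_{i+1}(u)=\eta_i(u)\bar{\eta}_{i+1}(u)$ of Remark~\ref{etabargen}.

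The substance of the argument is the verification that $\Phi$ respects the relations \eqref{hhN2}--\eqref{serreN2}. Relation \eqref{hhN2} is immediate from \eqref{etabrel1}. Relation \eqref{hbN2} is extracted from the closed-form bracket \eqref{etabrel3}: substituting $\Phi$ and reorganising \eqref{etabrel3} as a recursion in the spectral parameters yields the two-term identity \eqref{hbN2} with its base case, exactly as in the passage between ``closed-form'' and ``recursive'' Drinfeld relations used throughout \cite{BK06,To23}. Relations \eqref{bbN21} and \eqref{bbN22} follow from \eqref{e:dyrel5}, \eqref{e:dyrel34} and \eqref{e:dyrel6}; the diagonal term $-2\delta_{ij}(-1)^r h_{i,r+s+1}$ of \eqref{bbN21} is matched with the $\bar{\eta}$-correction in \eqref{e:dyrel34} after unwinding the recursion \eqref{e:tetatwisteddefinition}, and this fixes the remaining scalars. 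Finally the Serre relation \eqref{serreN2} is reconciled with the pair \eqref{e:dyrel7}--\eqref{e:dyrel8}: when $k_1+k_2$ is odd the right-hand side $(-1)^{k_1}\{h_{i,k_1+k_2+1},b_{j,r-1}\}$ of \eqref{serreN2} vanishes because $h_{i,\mathrm{even}}=0$, which is precisely \eqref{e:dyrel7}; when $k_1+k_2$ is even one substitutes the $\bar{\eta}$-expression for $h_{i,k_1+k_2+1}$, applies \eqref{etabrel3}, and reindexes the resulting double sum into the shape of \eqref{e:dyrel8} by the same manipulation of summation ranges carried out in the proof of Theorem~\ref{T:presentDirac}.

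It remains to prove bijectivity. Surjectivity is immediate, since $\Phi$ carries the generators $\{h_{i,2r+1}\}\cup\{b_{j,s}\}$ onto the Poisson-generating set $\{\bar{\eta}_i^{(2r)}\}\cup\{\theta_j^{(r)}\}$ of $R(y_n(\sigma),\tau)$ exhibited in Remark~\ref{etabargen}, up to nonzero scalars. Injectivity follows by comparing PBW bases: by Lemma~\ref{TwistedYPBWS}(2), $y_n^+(\sigma)$ is the polynomial ring on the ${}^\sigma b_{i,j}^{(r)}$ with $i>j$, $r\geq s_{i,j}$, together with the $h_{i,2r+1}$; by \cite[Theorem~3.4]{To23}, after the invertible polynomial change of variables from the $\eta_i^{(2r)}$ to the $\bar{\eta}_i^{(2r)}$ of Remark~\ref{etabargen}, $R(y_n(\sigma),\tau)$ is the polynomial ring on the Dirac-reduced root vectors $\theta_{i,j}^{(r)}$ with $i>j$ together with the $\bar{\eta}_i^{(2r)}$. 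Applying $\Phi$ to the elements ${}^\sigma b_{i+1,j}^{(r)}$ defined by iterated Poisson brackets (cf. the proof of Proposition~\ref{P:someprop}) shows that $\Phi({}^\sigma b_{i+1,j}^{(r)})$ is a nonzero scalar times $\theta_{i+1,j}^{(r+1)}$; hence $\Phi$ matches the two families of polynomial generators up to nonzero scalars, so it sends a PBW basis to a basis and is an isomorphism -- necessarily of Poisson algebras, since it is so on generators. I expect the relation check, and within it the matching of \eqref{serreN2} with \eqref{e:dyrel7}--\eqref{e:dyrel8}, to be the main obstacle, since it requires the generating-function bookkeeping above together with careful control of signs, including the $\varpi_{s,r}$ versus $\varpi_{r,s}$ subtlety flagged in the Remark following Theorem~\ref{T:presentDirac}.
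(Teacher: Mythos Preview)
Your proposal is correct and follows essentially the same approach as the paper: define the map on generators by $h_{i,2r+1}\mapsto c\,\bar\eta_{i+1}^{(2r+2)}$ and $b_{j,s}\mapsto c'\,\theta_j^{(s+1)}$ for suitable nonzero scalars, verify the Poisson relations \eqref{hhN2}--\eqref{serreN2} against \eqref{etabrel1}, \eqref{etabrel3} and \eqref{e:dyrel34}--\eqref{e:dyrel8}, and conclude bijectivity by matching the PBW bases of Lemma~\ref{TwistedYPBWS}(2) and \cite[Theorem~3.4]{To23} via Remark~\ref{etabargen}. The paper pins the scalars down explicitly (namely $2^{2r+2}$ and $(-1)^{(s_{i,i+1}+1)/2}(-2)^r$), but otherwise the argument is identical in structure.
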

\begin{proof}
Consider the map $ \gr_C \Y_n(\sigma) \to R(y_n(\sigma),\tau) $ given by  
\begin{equation}
\begin{split}
h_{0,2r-1}+F_{r-1}\Y_n(\sigma) & \mapsto (2)^{2r+2}\bar{\eta}_{1}^{(2r)}   \\
h_{i-1,2r-1}+F_{r-1}\Y_n(\sigma)& \mapsto (2)^{2r+2}\bar{\eta}_{i}^{(2r)}   \textnormal{ for } i>1 \\
h_{i,2r} &\mapsto 0 \\
b_{i,r-1}+F_{r-1}\Y_n(\sigma)& \mapsto (-1)^{\frac{s_{i,i+1}  + 1}{2}}(-2)^{r}\theta_{i}^{(r)}  
\end{split}
\end{equation}
Using relations \eqref{e:dyrel2}-\eqref{e:dyrel8} together with \eqref{etabrel1} and\eqref{etabrel3} it is readily verified that the map is a morphism of Poisson algebras. Indeed to check \eqref{hhN2} use \eqref{etabrel1}, to check \eqref{hbN2} use \eqref{etabrel3}, to check \eqref{bbN21} use \eqref{e:dyrel34} and \eqref{e:dyrel5}, to check \eqref{bbN22} use \eqref{e:dyrel6} and to check \eqref{serreN2} use \eqref{e:dyrel7},\eqref{e:dyrel8} and \eqref{etabrel3}.
By the proof of \cite[Theorem 3.7]{To23} we know that $R(y_n(\sigma))$ has a basis consisting of monomials in $\eta_{i}^{(2r)},\theta_{i,j}^{(s)}, 1 \leq i < j \leq n, r \geq 0, s > s_{i,j}$ where the $\theta_{i,j}^{(s)}$ are defined inductively via the recursion
\begin{eqnarray}
\label{e:higherthetas}
\begin{array}{rcl}
\theta_{i,i+1}^{(r)} := \theta_i^{(r)} & \text{ for } & 1\le i < n, \ 0 < r \vspace{4pt} \\
\theta_{i,j}^{(r)} := \{\theta_{i,j-1}^{(r-s_{j-1, j})}, \theta_{j-1}^{(s_{j-1, j} + 1)}\} & \text{ for } & 1\le i < j\le n, \ s_{i,j} < r.\end{array}
\end{eqnarray} By definition of the map we see that ${}^{\sigma}b_{j,i}^{(r-1)}$ as defined in \eqref{e:twisted shifted extra gens}
maps to a nonzero scalar multiple of  $\theta_{i,j}^{(r)}$.

 By Remark \ref{etabargen} it follows that the set of monomials in $\bar{\eta}_{i}^{(2r)},\theta_{i,j}^{(s)}, 1 \leq i < j \leq n, r \geq 0, s > s_{i,j}$ is a basis for $R(y_n(\sigma),\tau)$ and thus our map sends a spanning set to a basis. Hence it is an isomorphism.
\end{proof}
\section{Finite $W$-algebras for classical Lie algebras}

\label{s:finiteWalgebrasforclassical}

\subsection{Centralisers in general linear Lie algebras}
\label{ss:Dynkinandcentraliser}

Fix $N > 0$ and let $\lambda$ be a partition of $N$. Following Lemma~\ref{L:lambdabij} we let $\sigma$ be the shift matrix associated to $\lambda$. Let $\nu$ be a $\sigma$-admissible shape, and resume the notation $\sigma(\nu)$ and $\lambda(\nu)$ for the relative shift matrix and relative partition, defined in Section~\ref{ss:admissibleshapes}.

We introduce symbols $\{u_{a,i,j} \mid 1\le a \le m, \ 1\le i \le \nu_m, \ 1\le j \le \lambda(\nu)_a\}$ and identify $\C^N$ with the vector space with basis $u_{a,i,j}$.  The general linear Lie algebra $\g := \gl_N$ has a basis 
\begin{equation}
\{e_{a,i,j;b,k,l} \mid 1\le a,b \le m,\ 1\le i\le \nu_a, \ 1\le k \le \nu_b,\ 1\le j\le \lambda(\nu)_a, \ 1\le l \le \lambda(\nu)_b\}
\end{equation}
where $e_{a,i,j;b,k,l} u_{c,r,s} = \delta_{b,c}\delta_{k,r} \delta_{l, s} u_{a,i,j}$, so that
\begin{eqnarray}
\begin{array}{l}
[e_{a_1,i_1,j_1;b_1,k_1,l_1}, e_{a_2,i_2,j_2;b_2,k_2,l_2}] = \vspace{4pt} \\  \, \ \ \ \ \ \ \ \ \ \ \ \ \ \ \  \ \delta_{b_1,a_2} \delta_{k_1, i_2} \delta_{l_1, j_2} e_{a_1,i_1, j_1; b_2,k_2, l_2} -  \delta_{b_2,a_1} \delta_{k_2, i_1}\delta_{l_2, j_1} e_{a_2,i_2, j_2; b_1,k_1, l_1}
\end{array}
\end{eqnarray}

We pick the nilpotent element in $\g$ by the rule
\begin{eqnarray}
\label{e:edefinition}
e :=  \sum_{a=1}^{m} \sum_{i=1}^{\nu_a} \sum_{j = 1}^{\lambda(\nu)_{a}-1} e_{a,i,j;a,i,j + 1}
\end{eqnarray}
It has Jordan blocks of sizes $\lambda_1,...,\lambda_{n}$. We also define a semisimple element $h\in \gl_N$ by
\begin{eqnarray}
\label{e:hdefinition}
h := \sum_{a=1}^{m}\sum_{i=1}^{\nu_a} \sum_{j=1}^{\lambda(\nu)_a} (\lambda(\nu)_a-1+2j)e_{a,i,j;a,i,j}
\end{eqnarray}

The pair $\{e,h\}$ can be completed to an $\sl_2$ triple $\{e,h,f\} \subseteq \g$. The Dynkin grading $\g = \bigoplus_{i\in \Z} \g(i)$ given by eigenspaces of $\ad(h)$ is a good grading, in the sense of \cite{EK05}. It is determined by
\begin{eqnarray}
\label{e:gooddegrees}
\deg(e_{a,i,j;b,k,l}) = 2(l-j) + \lambda(\nu)_a - \lambda(\nu)_b.
\end{eqnarray}
and so, in particular, $e \in \g(2)$.
 For $1\le a, b\le m$, \ $1\le i\le \nu_a$, \ $1\le k\le \nu_b$ and $r = s_{a,b}(\nu), s_{a,b}(\nu)+ 1,..., s_{a,b}(\nu) + \min(\lambda(\nu)_a, \lambda(\nu)_b) - 1$, we define elements
\begin{eqnarray}
\label{e:cijrdefinition}
c_{a,i,b,k}^{(r)} = \sum_{ 2r = 2(l-j) + \lambda(\nu)_{a}- \lambda(\nu)_{b}} e_{a,i,j;b,k,l}
\end{eqnarray}

The following description of the centraliser is given in \cite[Lemma~4.1]{To23}, see \cite[\textsection 1]{Ya10} for more detail. In \cite{To23} two indices were used to describe a basis for $\C^N$, whilst here we use three indices. Translating between these two conventions is an easy exercise.
\begin{Lemma}
\label{L:centraliserlemma}
The centraliser $\g^e$ has a basis consisting of elements
\begin{eqnarray}
\label{e:gebasis}
\{c_{a,i,b,k}^{(r)} \mid 1\le a,b \le m,  1\le i \le \nu_{a},  1\le k \le \nu_{b}, \ s_{a,b} \le r < s_{a,b} + \min(\lambda(\nu)_a, \lambda(\nu)_b)\}
\end{eqnarray}
and the Lie bracket is given by
\begin{equation}
[c_{a,i,b,j}^{(r)}, c_{c,k,d,l}^{(s)}] = \delta_{b,c} \delta_{j,k}c_{a,i,d,l}^{(r + s)} - \delta_{a,d}\delta_{i,l} c_{c,k,b,j}^{(r+s)}.
\end{equation}
Furthermore $\g^e$ is a Dynkin graded subalgebra of $\g$ with $c_{a,i,b,k}^{(r)}$ lying in degree $2r$. $\hfill \qed$
\end{Lemma}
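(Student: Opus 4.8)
The lemma is a coordinate rendering of Yakimova's description of centralisers of nilpotent elements in $\gl_N$ (\cite[\textsection 1]{Ya10}, recorded as \cite[Lemma~4.1]{To23}), so my plan is to re-derive that description in the present three-index coordinates and to match up the combinatorial bounds, working throughout in the module picture. Identifying $\g = \gl_N$ with $\End(\C^N)$, by \eqref{e:edefinition} the subspace $V_{a,i} := \operatorname{span}(u_{a,i,1},\dots,u_{a,i,\lambda(\nu)_a})$ is $e$-stable and $e$ acts on it as a single nilpotent Jordan block; thus $V_{a,i}$ is a cyclic $\C[e]$-module of length $\lambda(\nu)_a$ and $\C^N = \bigoplus_{a,i} V_{a,i}$ as $\C[e]$-modules, whence
\[
\g^e \;=\; \End_{\C[e]}(\C^N) \;=\; \bigoplus_{a,b}\ \bigoplus_{i=1}^{\nu_a}\ \bigoplus_{k=1}^{\nu_b}\ \Hom_{\C[e]}\!\big(V_{b,k},V_{a,i}\big).
\]
Each summand $\Hom_{\C[e]}(V_{b,k},V_{a,i})$ has dimension $\min(\lambda(\nu)_a,\lambda(\nu)_b)$ and is spanned by the ``pure shift'' maps, i.e.\ those $\C[e]$-module homomorphisms carrying a cyclic generator to a power of $e$ applied to one. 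The defining condition $2r = 2(l-j)+\lambda(\nu)_a-\lambda(\nu)_b$ in \eqref{e:cijrdefinition} fixes precisely how far the third index is shifted, so for $r$ in the stated range $c_{a,i,b,k}^{(r)}$ is exactly one of these pure shift maps; in particular it is $\C[e]$-linear and lies in $\g^e$.

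Next I would establish linear independence and the range of $r$. Since each matrix unit $e_{a,i,j;b,k,l}$ occurs in exactly one of the elements $c_{a,i,b,k}^{(r)}$ (namely for the $r$ with $2r=2(l-j)+\lambda(\nu)_a-\lambda(\nu)_b$), distinct choices of $(a,i,b,k)$ or of $r$ produce elements with disjoint supports among the matrix units, so the family \eqref{e:gebasis} is automatically linearly independent. To pin down which $r$ give $\C[e]$-linear and nonzero shifts, I would combine \eqref{e:lambdafromsigmaell} with the additivity \eqref{e:shiftmatrixdefn} of $\sigma$ along the superdiagonal to get $|\lambda(\nu)_a-\lambda(\nu)_b| = 2 s_{a,b}(\nu)$, and feed this into the elementary criterion for a shift of cyclic $\C[e]$-modules of lengths $\lambda(\nu)_a$ and $\lambda(\nu)_b$ to be a nonzero homomorphism; this yields exactly $s_{a,b}(\nu)\le r< s_{a,b}(\nu)+\min(\lambda(\nu)_a,\lambda(\nu)_b)$. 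The count of such $r$ equals $\min(\lambda(\nu)_a,\lambda(\nu)_b) = \dim\Hom_{\C[e]}(V_{b,k},V_{a,i})$, so the independent family \eqref{e:gebasis} matches the dimension of $\g^e$ summand by summand and is therefore a basis.

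For the bracket, composition in $\End(\C^N)$ respects the block decomposition: a product of a map in $\Hom_{\C[e]}(V_{b,j},V_{a,i})$ with one in $\Hom_{\C[e]}(V_{d,l},V_{c,k})$ vanishes unless $(c,k)=(b,j)$, and in that case composing two pure shifts adds the shift amounts, giving $c_{a,i,b,j}^{(r)}c_{c,k,d,l}^{(s)} = \delta_{b,c}\delta_{j,k}\,c_{a,i,d,l}^{(r+s)}$ with the convention that a superscript outside the admissible range denotes $0$; subtracting the opposite product produces the stated commutator. Finally, every matrix unit occurring in $c_{a,i,b,k}^{(r)}$ has Dynkin degree $2(l-j)+\lambda(\nu)_a-\lambda(\nu)_b=2r$ by \eqref{e:gooddegrees}, so $c_{a,i,b,k}^{(r)}\in\g(2r)$, and since $e\in\g(2)$ a comparison of graded components in $0=[e,x]$ for $x\in\g^e$ shows that $\g^e$ is a graded subalgebra; the homogeneous basis \eqref{e:gebasis} then exhibits it as Dynkin graded with $c_{a,i,b,k}^{(r)}$ in degree $2r$. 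I expect the bulk of the effort to go into the bookkeeping of the second paragraph, matching the non-vanishing interval of the shift maps with the stated bounds on $r$ via $|\lambda(\nu)_a-\lambda(\nu)_b|=2s_{a,b}(\nu)$; this is the convention-translation exercise alluded to just before the statement, and the one place where sign and offset slips are easy to make.
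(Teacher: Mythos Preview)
Your proposal is correct and is precisely the argument underlying the references the paper cites; the paper itself gives no proof, merely pointing to \cite[Lemma~4.1]{To23} and \cite[\textsection 1]{Ya10} and remarking that passing from their two-index labelling to the present three-index labelling is routine. Your write-up carries out exactly that translation, and the key identity $|\lambda(\nu)_a-\lambda(\nu)_b|=2s(\nu)_{a,b}$ (which you correctly extract from \eqref{e:lambdafromsigmaell} and the additivity \eqref{e:shiftmatrixdefn}) is what makes the bounds on $r$ line up.
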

\subsection{Symplectic and orthogonal subalgebras}
\label{ss:symplecticandorthogonal}

Fix $N > 0$ an integer and $\ve = \pm 1$ such that $\ve^N = 1$. We keep the partition $\lambda$ and shift matrix $\sigma$ from Section~\ref{ss:Dynkinandcentraliser}, and we now also assume that $\lambda \in \P_\ve(N)$ and that $\nu$ is $(-\ve(-1)^{\lambda_1}, \sigma)$-admissible.

Consider the matrix 
\begin{eqnarray}
\label{e:Jdefinition}
J:=  \sum_{a=1}^{m} \sum_{i=1}^{\nu_a} \sum_{j=1}^{\lambda(\nu)_a} (\beta)^i (-1)^{j+1}  e_{a,i,j;a,\nu_a + 1 - i, \lambda(\nu)_a + 1 - j}
\end{eqnarray}
This is a block diagonal matrix can be described as follows. For each index $a = 1,...,m$ there is a block of size $\nu_{a}\lambda(\nu)_a$, and each of these blocks is an antidiagonal matrix with entries $\pm 1$. Furthermore, when $\ve = 1$ the matrix $J$ is symmetric and when $\ve = -1$ it is anti-symmetric.

Now define an involution of $\g = \gl_N$ by the rule
\begin{eqnarray}
\label{e:taudefinition}
\tau : X \mapsto - J^{-1} X^\top J
\end{eqnarray}

We decompose $\g = \g_0 \oplus \g_1$ into eigenspaces for $\tau$ so that $\g_0$ is the space of $\tau$-invariants. Note that $J$ is symmetric  and $\g_0 \cong \so_N$ for $\ve =1$, whilst $J$ is antisymmetric and $\g = \sp_N$ for $\ve = -1$.

By Lemma~\ref{L:tauaction}(ii) we have $\{e,h\}\subseteq \g_0$, whilst $f\in \g_0$ thanks to \cite[Lemma~3.4.4]{CM93}. Write $G_0$ for the connected component of the subgroup of $\GL_N$ consisting of elements which preserve the bilinear form associated to $J$, so that $G_0$ is connected and $\Lie(G_0) = \g_0$.

\begin{Lemma}
\label{L:tauaction}
For all admissible indices we have: 
\begin{enumerate}
\setlength{\itemsep}{4pt}
\item[(i)] $\tau(e_{a,i,j;b,k,l}) = (\beta)^{i+k}(-1)^{j +l + 1}  e_{b,\nu_b + 1 - k, \lambda(\nu)_b + 1 - l; a,\nu_a + 1 - i, \lambda(\nu)_a + 1 - j}$;
\item[(ii)] $\tau(e) = e$ and $\tau(h)=h$;
\item[(iii)] $\tau(c_{a,i,b,k}^{(r)}) = (\beta)^{i+k}(-1)^{r-\frac{\lambda_k - \lambda_i}{2} - 1}c_{b,\nu_b + 1 - k,a, \nu_a + 1 - i}^{(r)}.$

\end{enumerate}
\end{Lemma}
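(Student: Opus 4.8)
The plan is to verify each of the three identities by direct computation from the definitions, exploiting the fact that $\tau$ is the explicit involution $X\mapsto -J^{-1}X^\top J$ with $J$ given in \eqref{e:Jdefinition}. The crucial preliminary observation is that $J$ is (anti)diagonal in the three-index basis: the only nonzero entries of $J$ are the coefficients $(\beta)^i(-1)^{j+1}$ of $e_{a,i,j;a,\nu_a+1-i,\lambda(\nu)_a+1-j}$, so $J$ sends the basis vector $u_{a,i,j}$ to $(\beta)^i(-1)^{j+1}u_{a,\nu_a+1-i,\lambda(\nu)_a+1-j}$. Since each such coefficient squares to $1$ (because $\beta=\pm1$), we have $J^{-1}=J^\top$, and in fact $J^{-1}$ acts on $u_{a,i,j}$ by $(\beta)^{\nu_a+1-i}(-1)^{\lambda(\nu)_a+1-j+1}=(\beta)^{\nu_a+1-i}(-1)^{\lambda(\nu)_a-j}$.

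First I would prove (i). Using $(e_{a,i,j;b,k,l})^\top=e_{b,k,l;a,i,j}$, I compute $-J^{-1}e_{b,k,l;a,i,j}J$ by tracking how the (anti)diagonal matrices permute and rescale the indices. Explicitly, $e_{b,k,l;a,i,j}J$ picks up the $J$-coefficient attached to the index $(a,i,j)$ appearing as the ``target'' of $J$, namely it equals $(\beta)^{\nu_a+1-i}(-1)^{\lambda(\nu)_a-j+1}e_{b,k,l;a,\nu_a+1-i,\lambda(\nu)_a+1-j}$ — and then left-multiplication by $-J^{-1}$ similarly rewrites the $(b,k,l)$ index. Collecting the signs, together with $(\beta)^{2(\dots)}=1$ simplifications, yields the coefficient $(\beta)^{i+k}(-1)^{j+l+1}$ and the index pattern in (i); this is bookkeeping but needs care with the $+1$'s in the exponents of $(-1)$. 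Then (ii) is immediate from (i): applying it to the summands of $e$ in \eqref{e:edefinition} (where $b=a$, $k=i$, $l=j+1$) gives exponent $(\beta)^{2i}(-1)^{2j+2}=1$ and sends $e_{a,i,j;a,i,j+1}$ to $e_{a,\nu_a+1-i,\lambda(\nu)_a-j;a,\nu_a+1-i,\lambda(\nu)_a+1-j}$, and reindexing $i\mapsto\nu_a+1-i$, $j\mapsto\lambda(\nu)_a-j$ permutes the sum back to itself; the argument for $h$ in \eqref{e:hdefinition} is the same, noting $(\lambda(\nu)_a-1+2(\lambda(\nu)_a+1-j))$ pairs with the reindexed coefficient — actually one checks $h$ is fixed because the diagonal entries are symmetric under $j\mapsto\lambda(\nu)_a+1-j$ composed with the sign reversal inherent in $\tau$ being $-J^{-1}X^\top J$; I will spell this out.

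The main work is (iii), and this is where I expect the only real obstacle. I would substitute the definition \eqref{e:cijrdefinition} of $c_{a,i,b,k}^{(r)}$ as a sum over pairs $(j,l)$ with $2r=2(l-j)+\lambda(\nu)_a-\lambda(\nu)_b$, apply (i) termwise, and then recognise the result as a scalar times $c_{b,\nu_b+1-k,a,\nu_a+1-i}^{(r)}$. Applying (i) to $e_{a,i,j;b,k,l}$ produces $(\beta)^{i+k}(-1)^{j+l+1}e_{b,\nu_b+1-k,\lambda(\nu)_b+1-l;a,\nu_a+1-i,\lambda(\nu)_a+1-j}$; substituting $j'=\lambda(\nu)_b+1-l$, $l'=\lambda(\nu)_a+1-j$ one checks the defining constraint $2r=2(l'-j')+\lambda(\nu)_b-\lambda(\nu)_a$ holds (it does, since $l'-j'=\lambda(\nu)_a-\lambda(\nu)_b-(l-j)$ gives $2(l'-j')+\lambda(\nu)_b-\lambda(\nu)_a=-2(l-j)-(\lambda(\nu)_a-\lambda(\nu)_b)=-(2r)$... so actually one gets the $c^{(r)}$ with roles of the block-indices swapped, consistent with the right-hand side). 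The delicate point is that the sign $(-1)^{j+l+1}$ is a priori $(j,l)$-dependent, but on the constraint surface $j+l$ is determined modulo $2$: since $2(l-j)=2r-\lambda(\nu)_a+\lambda(\nu)_b$, we get $l-j\equiv (\lambda(\nu)_b-\lambda(\nu)_a)/2 + r \pmod{\text{integrality}}$, hence $l+j\equiv l-j\equiv r-\tfrac{\lambda(\nu)_a-\lambda(\nu)_b}{2}\pmod 2$ (using $2j$ even), so $(-1)^{j+l+1}=(-1)^{r-\frac{\lambda(\nu)_a-\lambda(\nu)_b}{2}-1}$ is constant across the sum and factors out; identifying $\lambda_i=\lambda(\nu)_a$ and $\lambda_k=\lambda(\nu)_b$ under the block labelling then gives exactly the exponent $(-1)^{r-\frac{\lambda_k-\lambda_i}{2}-1}$ stated (after matching the sign convention, $\lambda(\nu)_a-\lambda(\nu)_b$ versus $\lambda_k-\lambda_i$, which agree up to the overall $-1$ since $(-1)^{-x}=(-1)^x$). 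The residual task is to confirm that the reindexed sum is precisely $c_{b,\nu_b+1-k,a,\nu_a+1-i}^{(r)}$ and not a shifted index, which follows because the substitution $(j,l)\mapsto(\lambda(\nu)_b+1-l,\lambda(\nu)_a+1-j)$ is a bijection of the index set for the former sum onto that of the latter. I would present (iii) with the constraint-surface parity argument made explicit, as that is the one genuinely non-formal step.
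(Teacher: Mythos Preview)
Your plan is correct and coincides with the paper's own argument: compute (i) by direct matrix multiplication using the explicit form of $J$, then obtain (ii) and (iii) by applying (i) termwise to the defining sums \eqref{e:edefinition}, \eqref{e:hdefinition}, \eqref{e:cijrdefinition}. The paper records only that $J^{-1}=\ve J$ and that the $\lambda_i$ share a parity; your observation $J^{-1}=J^\top$ is equivalent (since $J^\top=\ve J$), and your constraint-surface parity argument for (iii) is exactly the step the paper leaves implicit.
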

\begin{proof}
Part (i) is proven using the fact that $J^{-1} = \epsilon J$, and multiplying matrices, bearing in mind the fact that the $\lambda_i$ all have the same parity. 
Now (ii) and (iii) follow by applying (i) to \eqref{e:edefinition}, \eqref{e:hdefinition} and \eqref{e:cijrdefinition}.
\end{proof}

\begin{Remark}
From henceforth the symbolic complexity will get somewhat more complicated, and to ease notation we introduce a certain involution on our index sets. Whenever we have a four-tuple of indices $(a,i,b,k)$ in the ranges $1 \le a,b \le m, \ 1 \le i \le \nu_a, \ 1 \le k \le \nu_b)$ we will write
\begin{eqnarray}
(a,i',b,k') := (a,\nu_a+1-i,b,\nu_b+1-k).
\end{eqnarray}
Similarly for a six-tuple $(a,i,j;b,k,l)$ with indexes in the range $1\le a,b \le m,\ 1\le i\le \nu_a, \ 1\le k \le \nu_b,\ 1\le j\le \lambda(\nu)_a, \ 1\le l \le \lambda(\nu)_b)$ so that, for example, Lemma \ref{L:tauaction} (i) could be read as $$\tau(e_{a,i,j;b,k,l}) = (\beta)^{i+k}(-1)^{j +l + 1}  e_{b, k', \lambda(\nu)_b + 1 - l; a,i', \lambda(\nu)_a + 1 - j}.$$
\end{Remark}

\subsection{Generators of the classical finite $W$-algebra}
\label{ss:generatorsWalgebra}

Here we reproduce formulas for the Poisson generators of $S(\gl_N, e)$ discovered by Brundan and Kleshchev in their seminal work \cite[\textsection 9]{BK06}, in which they gave presentations of the non-commutative finite $W$-algebras associated to $\gl_N$. The results of this section generalise \cite[\textsection 4.3]{To23} to the parabolic setting. Retain all of the choices and notation from Section~\ref{ss:symplecticandorthogonal}.

Now for $1 \le a,b \le m$, \ $1 \le i \le \nu_a$, \ $1 \le k \le \nu_b$, \ $0 \le x < n$ and $r > 0$, we let
\begin{equation}\label{e:invaraintsdef}
t_{a,i,b,k;x}^{(r)} := \sum_{s=1}^r (-1)^{r-s} \sum_{\substack{(a_u,i_u, j_u,b_u, k_u, l_u) \\ u =1,...,s}} (-1)^{\hash(k,x)}
e_{a_1,i_1, j_1; b_1,k_1, l_1} \cdots e_{a_s,i_s, j_s;b_s, k_s, l_s} \in S(\g(\geez))
\end{equation}
where we use the symbol
 \begin{eqnarray}
 \hash(k,x):={\hash \{q=1,\dots,s-1 \mid \bnu_{b_{q}-1}+ k_q \le x\}}.
 \end{eqnarray}
and the second summation is taken over all $6$-tuples of indices $(a_u,i_u, j_u,b_u, k_u, l_u)_{u=1,...,s}$ in the ranges
\begin{eqnarray}
\label{e:indexbounds}
\begin{array}{c}
1 \le a_u, b_u \le m, \ \ 1\le i_u \le \nu_{a_u}, \ \  1\le k_u \le \nu_{b_u} , \\  1\le j_u \le \lambda(\nu)_{a_u} \ \ \text{ and } \ \  1\le l_u \le \lambda(\nu)_{b_u}
\end{array}
\end{eqnarray}
satisfying the following six conditions:
\begin{itemize}
\setlength{\itemsep}{6pt}
\item[(a)] $\sum_{u=1}^s (2l_u - 2j_u) + \lambda(\nu)_{a_u} - \lambda(\nu)_{b_u} = 2(r - s)$;
\item[(b)] $2l_u- 2j_u + \lambda(\nu)_{a_u} - \lambda(\nu)_{b_u} \ge 0$ for each $u = 1,\dots,s$;
\item[(c)] if $\bnu_{b_u-1}+k_u > x$, then $l_u < j_{u+1}$ for each $u = 1,\dots,s-1$;
\item[(d)] if $\bnu_{{b_u}-1}+k_u \le x$ then $l_u \ge j_{u+1}$ for each $u = 1,\dots,s-1$; 
\item[(e)] $a_1 = a$, $i_1 = i$, $b_s = b$ and $k_s = k$.
\item[(f)] $b_u = a_{u+1}$ and $k_u = i_{u+1}$ for all $u=1,...,s-1$.
\end{itemize}

Fixing $r > 0$ and choosing $s \in \{1,...,r\}$ we make the notation $\X_{a,i,b,k; x}^{(r, s)}$ for the set of ordered sets $(a_u,i_u, j_u,b_u, k_u, l_u)_{u = 1}^s$ in the range \eqref{e:indexbounds} satisfying the conditions (a)--(f) and consider the map 
\begin{eqnarray*}
\begin{array}{l}
 
 \upsilon(a_u,i_u,j_u,b_u,k_u,l_u)_{u=1,...,s} :=\vspace{6pt}
 \\ \hspace{-20pt} (b_{s+1-u}k'_{s+1-u}, \lambda(\nu)_{b_{s+1-u}} + 1 - l_{s+1-u}, a_{s+1-u}, i'_{s+1-u}, \lambda(\nu)_{a_{s+1-u}} + 1 - j_{s+1-u})
 \end{array}
\end{eqnarray*}
It is clear that this is an involution on collections of ordered sets in the range \eqref{e:indexbounds}, and we now prove that it respects the conditions (a)-(f).
\begin{Lemma}
\label{L:upsilonbijects}
For all $t=1,...,s$ the map $\upsilon$ defines a bijection 
\begin{eqnarray}
\label{e:Xbijection}
\X_{a,i,b,k; \bnu_t}^{(r,s)} \overset{1\text{-}1}{\longrightarrow} \X_{b, k', a, i'; \bnu_t}^{(r,s)}.
\end{eqnarray}
\end{Lemma}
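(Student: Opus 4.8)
The plan is to verify directly that the involution $\upsilon$ carries a tuple satisfying conditions (a)--(f) for the data $(a,i,b,k;\bnu_t)$ to a tuple satisfying the same conditions for the data $(b,k',a,i';\bnu_t)$, and that it is its own inverse (up to the obvious identification), which together give the claimed bijection. Since $\upsilon$ is manifestly an involution on the set of ordered $s$-tuples of indices in the range \eqref{e:indexbounds} (reversing the order of the tuple, applying the prime involution to the $\nu$-indices, and replacing $j,l$ by $\lambda(\nu)_\bullet + 1 - j, \lambda(\nu)_\bullet + 1 - l$, then swapping the roles of the two halves of each six-tuple), the only content is that conditions (a)--(f) are preserved. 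I would check them one at a time.

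First I would record the elementary identities underlying everything: writing $u' := s+1-u$ for the reversed index, the $u$-th six-tuple of $\upsilon(\cdots)$ is $(b_{u'}, k'_{u'}, \lambda(\nu)_{b_{u'}}+1-l_{u'}; a_{u'}, i'_{u'}, \lambda(\nu)_{a_{u'}}+1-j_{u'})$, so its ``$2l-2j + \lambda(\nu)_a - \lambda(\nu)_b$'' quantity equals $2(\lambda(\nu)_{a_{u'}}+1-j_{u'}) - 2(\lambda(\nu)_{b_{u'}}+1-l_{u'}) + \lambda(\nu)_{b_{u'}} - \lambda(\nu)_{a_{u'}} = 2l_{u'} - 2j_{u'} + \lambda(\nu)_{a_{u'}} - \lambda(\nu)_{b_{u'}}$, i.e.\ it is invariant under $\upsilon$. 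This immediately gives (b) (the set of degrees is just permuted) and, after summing over $u$, gives (a) (the total $2(r-s)$ is unchanged, and $s$ is unchanged). Condition (f) $b_u = a_{u+1}$, $k_u = i_{u+1}$ becomes, for $\upsilon$, the requirement $b_{u'} = a_{(u+1)'} = a_{u'-1}$ and $k'_{u'} = i'_{u'-1}$, which is exactly the original condition (f) for the pair $(u'-1, u')$ with indices primed; so (f) is preserved. Condition (e) $a_1 = a, i_1 = i, b_s = b, k_s = k$ becomes $b_s = b, k'_s = i'$ i.e.\ $k_s = i$ --- wait, one must track the bookkeeping carefully here: the first six-tuple of $\upsilon(\cdots)$ has left label $(b_s, k'_s, \dots)$, so $a_1^{\mathrm{new}} = b_s = b$ and $i_1^{\mathrm{new}} = k'_s = k'$; the last has right label $(a_1, i'_1, \dots)$, so $b_s^{\mathrm{new}} = a_1 = a$ and $k_s^{\mathrm{new}} = i'_1 = i'$. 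Thus $\upsilon$ sends data $(a,i,b,k)$ to $(b, k', a, i')$, exactly matching the target of \eqref{e:Xbijection}.

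The main obstacle --- and the step deserving real care --- is conditions (c) and (d), since these are the only ones involving the cut-off parameter $x = \bnu_t$, and they are not symmetric in an obvious way. The key computation is how the quantity ``$\bnu_{b_u - 1} + k_u$'' transforms under $\upsilon$. For the $u$-th six-tuple of the image, the relevant quantity is $\bnu_{b^{\mathrm{new}}_u - 1} + k^{\mathrm{new}}_u$. Now $b^{\mathrm{new}}_u = a_{u'}$ and $k^{\mathrm{new}}_u = i'_{u'} = \nu_{a_{u'}} + 1 - i_{u'}$, so this is $\bnu_{a_{u'}-1} + \nu_{a_{u'}} + 1 - i_{u'} = \bnu_{a_{u'}} + 1 - i_{u'}$. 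I would then use condition (f) for the original tuple, $a_{u'} = b_{u'-1}$ and $i_{u'} = k_{u'-1}$, valid for $u' \ge 2$ i.e.\ $u \le s-1$, to rewrite this as $\bnu_{b_{u'-1}} + 1 - k_{u'-1}$. Comparing with the original threshold $\bnu_{b_{u'-1} - 1} + k_{u'-1}$ one gets the reflection identity
\begin{eqnarray*}
\big(\bnu_{b^{\mathrm{new}}_u - 1} + k^{\mathrm{new}}_u\big) + \big(\bnu_{b_{u'-1}-1} + k_{u'-1}\big) = \bnu_{b_{u'-1}} + \bnu_{b_{u'-1}-1} + 1 = 2\bnu_{b_{u'-1}-1} + \nu_{b_{u'-1}} + 1.
\end{eqnarray*}
The point is that if $\nu$ is $\sigma$-admissible and we only ever cut at $x = \bnu_t$ for some $t$, then whenever $b_{u'-1} - 1 < t$ one has $\bnu_{b_{u'-1}} \le \bnu_t$ and whenever $b_{u'-1} > t$ one has $\bnu_{b_{u'-1}-1} \ge \bnu_t$, so the inequality $\bnu_{b_{u'-1}-1} + k_{u'-1} \le \bnu_t$ and its reflected version $\bnu_{b^{\mathrm{new}}_u-1} + k^{\mathrm{new}}_u \le \bnu_t$ are complementary except possibly on the boundary block $b_{u'-1} = t$; and in that boundary block $\sigma$-admissibility together with the setup forces the relevant entries of $\sigma$ to vanish, so there is no ambiguity. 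Using this, the ``$\le x$'' and ``$> x$'' cases swap under $\upsilon$. Finally the conclusions $l_u < j_{u+1}$ versus $l_u \ge j_{u+1}$ also swap: under $\upsilon$, ``$l_u^{\mathrm{new}} < j_{u+1}^{\mathrm{new}}$'' reads $\lambda(\nu)_{b_{u'}} + 1 - l_{u'} < \lambda(\nu)_{a_{(u+1)'}} + 1 - j_{(u+1)'} = \lambda(\nu)_{a_{u'-1}} + 1 - j_{u'-1}$, and invoking (f) again ($a_{u'-1}$-data $= b$-data at index $u'-1$... careful: $a_{u'} = b_{u'-1}$, so $\lambda(\nu)_{b_{u'}} = \lambda(\nu)_{a_{u'+1}}$ --- I would set this up cleanly in the writeup) this becomes $l_{u'-1} > j_{u'} - $ (a shift that cancels), i.e.\ precisely $l_{u'-1} \ge j_{u'}$ or its negation as appropriate. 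So condition (c) for the image corresponds to condition (d) for the original and vice versa, and since the reflection of the threshold inequality is complementary, (c) and (d) together are exactly preserved. Assembling these verifications shows $\upsilon$ maps $\X_{a,i,b,k;\bnu_t}^{(r,s)}$ into $\X_{b,k',a,i';\bnu_t}^{(r,s)}$; applying the same argument to the latter (and using $\upsilon^2 = \mathrm{id}$) gives the inverse, proving \eqref{e:Xbijection}.
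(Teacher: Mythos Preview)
Your overall strategy coincides with the paper's: observe that $\upsilon$ is an involution and then verify that it carries conditions (a)--(f) for the source data to conditions (a)--(f) for the target data. Your treatment of (a), (b), (e), (f) is correct and matches what the paper sketches.

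However, your analysis of (c) and (d) contains two errors that happen to cancel, so the conclusion is right but the reasoning is not. You claim that the threshold inequalities $\bnu_{b_{u'-1}-1}+k_{u'-1}\le\bnu_t$ and $\bnu_{b_u^{\mathrm{new}}-1}+k_u^{\mathrm{new}}\le\bnu_t$ are \emph{complementary}, and that the conclusions $l_u<j_{u+1}$ and $l_u\ge j_{u+1}$ \emph{swap} under $\upsilon$. Neither is true: both are preserved. For the threshold, set $v=s-u$, $c=b_v$, $p=k_v$ with $1\le p\le\nu_c$. Your own reflection identity gives $\bnu_{b_u^{\mathrm{new}}-1}+k_u^{\mathrm{new}}=\bnu_c+1-p$, and since both $\bnu_{c-1}+p$ and $\bnu_c+1-p$ lie in the interval $[\bnu_{c-1}+1,\bnu_c]$, each is $\le\bnu_t$ if and only if $c\le t$. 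So the threshold condition at the new index $u$ is \emph{equivalent} to the threshold condition at the original index $v$, not complementary to it. For the conclusion, using (f) to identify $\lambda(\nu)_{a_{s+1-u}}=\lambda(\nu)_{b_{s-u}}$, one computes directly that $l_u^{\mathrm{new}}<j_{u+1}^{\mathrm{new}}$ is equivalent to $l_{s-u}<j_{s+1-u}$, i.e.\ the same strict inequality at index $v$, not the reversed one. Hence (c) for the image at $u$ is exactly (c) for the original at $v$, and likewise for (d); as $u$ ranges over $1,\dots,s-1$ so does $v$, and the conditions are preserved. Your ``double swap'' argument reaches the right endpoint by accident; you should replace it with this direct equivalence.
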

\begin{proof}
The index bounds \eqref{e:indexbounds} ensure that $\X_{a,i,b,k, \bnu_t}^{(r,s)}$ is finite. Since $\upsilon^2$ is the identity map it will suffice to show that $\upsilon$ maps the left hand side of \eqref{e:Xbijection} to the right. We fix $(a_u,i_u, j_u,b_u k_u, l_u)_{u = 1,...,s}\in \X_{a,i,b,k; \bnu_t}^{(r,s)}$ and check that conditions (a)--(f) hold for $\upsilon(a_u,i_u, j_u, b_u, k_u, l_u)_{u=1,...,s}$. For example, 
\begin{eqnarray*}
\sum_{u=1}^s \big(2(\lambda(\nu)_{a_{s+1-u}} + 1 - j_{s+1-u}) - 2(\lambda(\nu)_{b_{s+1-u}} + 1 - l_{s+1-u}) + (\lambda(\nu)_{b_{s+1-u}} - \lambda(\nu)_{a_{s+1-m}})\big)\ \ \ \ \ \ \ \ \\
\ \ \ \ \ \ \ \ \ \ = \sum_{u=1}^s (2l_u - 2j_u + \lambda(\nu)_{a_u} - \lambda(\nu)_{b_u})   = 2(r - s)
\end{eqnarray*}
This shows that when (a) is satisfied for $(a_u,i_u, j_u, b_u, k_u, l_u)_{u=1,...,s}$, it follows that the formulation of (a) which defines $\X_{b, k' , a, i'; \bnu_t}^{(r,s)}$  holds for $\upsilon(a_u,i_u, j_u, b_u, k_u, l_u)_{u=1,...,s}$. Conditions (b)--(f) can be checked similarly.
\end{proof}

Now we define a map $\theta : \g^e \to S(\g(\geez))$ by
\begin{eqnarray}
\label{e:intermediatemap}
\begin{array}{rcl}
c_{a,i,a,i}^{(r-1)}  \longmapsto (-1)^{r-1}t_{a,i,a,j; \bnu_{a-1}}^{(r)} & & \text{ for } a=1,...,m \\
c_{a,i,b,k}^{(r-1)} \longmapsto (-1)^{r-1}t_{a,i,b,k; \bnu_{b-1}}^{(r)} & & \text{ for } 1\le a < b \le m \\
c_{b,k,a,i}^{(r-1)} \longmapsto (-1)^{r-1}t_{b,k,a,i; \bnu_{b-1}}^{(r)} & & \text{ for } 1\le a < b \le m 
\end{array}
\end{eqnarray}
for all admissible $i,j,r$.

By \eqref{e:gooddegrees} and Lemma~\ref{L:tauaction}(i) we see that $\tau$ induces an involution on $\g^e$ and $S(\g(\geez))$, and the latter preserves $S(\g,e)$. 
\begin{Proposition}
\label{P:tauequivariance}
$\theta : \g^e \to S(\g,e)$ is $\tau$-equivariant. 
\end{Proposition}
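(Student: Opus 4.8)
The goal is to show that the map $\theta:\g^e\to S(\g,e)$ defined in \eqref{e:intermediatemap} intertwines the involution $\tau$ on $\g^e$ (given on basis vectors by Lemma~\ref{L:tauaction}(iii)) with the involution $\tau$ on $S(\g,e)\subseteq S(\g(\geez))$. The plan is to compare, for each basis element $c_{a,i,b,k}^{(r-1)}$ of $\g^e$, the two elements $\theta(\tau(c_{a,i,b,k}^{(r-1)}))$ and $\tau(\theta(c_{a,i,b,k}^{(r-1)}))$ of $S(\g(\geez))$ directly, by expanding both in the monomial basis using the explicit formula \eqref{e:invaraintsdef} for $t_{a,i,b,k;x}^{(r)}$.

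First I would reduce to the case of a single $t^{(r)}$: since $\tau(c_{a,i,b,k}^{(r-1)})$ is (up to a scalar $(\beta)^{i+k}(-1)^{r-\frac{\lambda_k-\lambda_i}{2}-1}$) the basis vector $c_{b,k',a,i'}^{(r-1)}$, it suffices to show that $\tau$ sends $t_{a,i,b,k;\bnu_{b-1}}^{(r)}$ to the appropriate scalar multiple of $t_{b,k',a,i';\bnu_{b-1}}^{(r)}$ (and similarly for the diagonal case $a=b$, where the relevant index is $\bnu_{a-1}$; note that when $a=b$ the exponent $\frac{\lambda_k-\lambda_i}{2}$ need not vanish since $i,k$ range over the $\nu_a$ copies, but in \eqref{e:intermediatemap} the diagonal generators use $c_{a,i,a,i}^{(r-1)}$ with $i=k$, so that exponent is zero). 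Then I would apply $\tau$ termwise to the monomial $e_{a_1,i_1,j_1;b_1,k_1,l_1}\cdots e_{a_s,i_s,j_s;b_s,k_s,l_s}$ using Lemma~\ref{L:tauaction}(i): each factor picks up a sign $(\beta)^{i_u+k_u}(-1)^{j_u+l_u+1}$ and becomes $e_{b_u,k'_u,\lambda(\nu)_{b_u}+1-l_u;\,a_u,i'_u,\lambda(\nu)_{a_u}+1-j_u}$; after reversing the order of the product (which is legal since $S(\g(\geez))$ is commutative) one recognises precisely the six-tuple $\upsilon(a_u,i_u,j_u,b_u,k_u,l_u)_{u=1,\dots,s}$. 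By Lemma~\ref{L:upsilonbijects} (applied with $t$ chosen so that $\bnu_t=\bnu_{b-1}$, i.e. $t=b-1$) the involution $\upsilon$ is a bijection $\X_{a,i,b,k;\bnu_{b-1}}^{(r,s)}\to\X_{b,k',a,i';\bnu_{b-1}}^{(r,s)}$, so after this substitution the summation range is exactly the one defining $t_{b,k',a,i';\bnu_{b-1}}^{(r)}$.

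It then remains to track the total sign. There are several contributions: the product of the factorwise signs $\prod_{u=1}^s (\beta)^{i_u+k_u}(-1)^{j_u+l_u+1}$; the sign $(-1)^{\hash(k,x)}$ built into \eqref{e:invaraintsdef}; and the overall $(-1)^{r-s}$. Using conditions (f) and (e) the exponent $\sum_u(i_u+k_u)$ collapses (consecutive $k_u=i_{u+1}$ cancel mod $2$ up to the endpoints $i_1=i$, $k_s=k$, with the extra copies $i_u$ appearing in pairs), giving a $\beta$-power depending only on $i+k$; and using condition (a), namely $\sum_u(2l_u-2j_u+\lambda(\nu)_{a_u}-\lambda(\nu)_{b_u})=2(r-s)$ together with the fact that all $\lambda(\nu)_a$ have the same parity, the exponent $\sum_u(j_u+l_u+1)$ reduces mod $2$ to a function of $r,s$ and the relevant $\lambda(\nu)$'s. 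Finally, I would check that $\upsilon$ transforms the $\hash$-statistic correctly: since $\upsilon$ reverses the order of the tuple and swaps the roles of the $j$'s and $l$'s, conditions (c) and (d) get interchanged, and one verifies $\hash(k,\bnu_{b-1})$ for the original tuple equals $s-1-\hash(i',\bnu_{a-1})$ (or the appropriate complementary count) for its image, contributing a further controllable sign. Assembling these, the net scalar is exactly $(\beta)^{i+k}(-1)^{r-\frac{\lambda_k-\lambda_i}{2}-1}$, matching Lemma~\ref{L:tauaction}(iii), and the three cases of \eqref{e:intermediatemap} are handled uniformly (the diagonal case being the specialisation $a=b$, $i=k$, $k'=i'$).

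\textbf{Main obstacle.} The conceptual content — that $\upsilon$ is the combinatorial shadow of $\tau$ — is already packaged in Lemma~\ref{L:upsilonbijects}; the real work is the bookkeeping of signs, in particular showing that the $\hash$-statistic and the $(-1)^{r-s}$ factor conspire with the factorwise signs from Lemma~\ref{L:tauaction}(i) to produce precisely the scalar in Lemma~\ref{L:tauaction}(iii), independently of $s$ and of the particular tuple. The parity hypotheses on $\lambda$ (all parts of one parity) and the identity $J^{-1}=\epsilon J$ are what make these sign computations come out uniformly, and the delicate point is the interaction between the order-reversal in $\upsilon$, the swap $j_u\leftrightarrow\lambda(\nu)-l_u$, and the cutoff conditions (c)/(d) defining the $\hash$-count.
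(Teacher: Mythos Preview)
Your approach is essentially the same as the paper's: apply $\tau$ termwise using Lemma~\ref{L:tauaction}(i), recognise the resulting tuple as $\upsilon(\cdot)$, invoke Lemma~\ref{L:upsilonbijects}, and track the signs. The paper reduces to the single identity $\tau(t_{a,i,b,k;\bnu_t}^{(r)}) = (\beta)^{i+k}(-1)^{r-\frac{\lambda_k-\lambda_i}{2}} t_{b,k',a,i';\bnu_t}^{(r)}$ exactly as you propose, and handles the $\beta$-exponent and the $(-1)^{\sum(j_u+l_u+1)}$ exponent via conditions (e), (f), (a) just as you describe.

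There is one concrete correction to make in your sign bookkeeping. The $\hash$-statistic is \emph{preserved} under $\upsilon$, not complemented. Since $x=\bnu_t$ is a block boundary, the condition $\bnu_{b_u-1}+k_u\le\bnu_t$ is simply $b_u\le t$, independent of $k_u$. Under $\upsilon$ the new ``$(b,k)$'' at position $u$ is $(a_{s+1-u},i'_{s+1-u})$, and condition (f) gives $a_{s+1-u}=b_{s-u}$; so the new $\hash$-count over $u=1,\dots,s-1$ is $\hash\{u\mid b_{s-u}\le t\}=\hash\{m=1,\dots,s-1\mid b_m\le t\}$, equal to the original. No extra $(-1)^{s-1}$ appears, and no further cancellation is needed. (Relatedly, both $t$-elements use the \emph{same} cutoff $\bnu_{b-1}$ in the off-diagonal case, not $\bnu_{a-1}$; cf.\ the third line of \eqref{e:intermediatemap}.) With this adjustment your argument goes through and matches the paper's.
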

\begin{proof}
The assertion that the image of $\theta$ lies in $S(\g,e)$ is an immediate consequence of \cite[Theorem~10.1]{BK06}, upon taking the top graded term with respect to the Kazhdan filtration of the finite $W$-algebra.

Thanks to Lemma~\ref{L:tauaction}(iii) the $\tau$-equivariance will follow from \eqref{e:tauclaim3}.  Fix indices $1\le a, b \le m, 1 \le i \le \lambda(\nu)_a,  1 \le k \le \lambda(\nu)_b,   \ 0\le x < n$ and $0 < r$. Choose $s\in \{1,...,r\}$. Let $(a_m,i_m, j_m,b_m, k_m,l_m)_{m=1}^s \in \X_{a,i,b,k; \bnu_t}^{(r,s)}$. Thanks to Lemma~\ref{L:tauaction}(i) we have
\begin{eqnarray}
\label{e:tauclaim3}
\tau(\prod_{m=1}^s e_{a_m,i_m, j_m;b_m k_m, l_m}) &=& (\beta)^{\sum_{m=1}^s k_m+i_m}(-1)^{\sum_{m=1}^s (j_m - l_m + 1)} \prod_{t=1}^s e_{\upsilon(a_m,i_m, j_m;b_m k_m, s_m)}
\end{eqnarray}
Conditions (a), (e) (f) imply that
$\sum_m (j_m - l_m + 1) = \frac{\lambda_{i_1} - \lambda_{k_s}}{2} - r.$ Conditions (e), (f) imply $(-1)^{\sum_{m=1}^s k_m+i_m}=(-1)^{i+k}$.
Hence the sign on the right hand side of \eqref{e:tauclaim3} is equal to $(\beta)^{i+k}(-1)^{r-\frac{\lambda_k - \lambda_i}{2}}$. Furthermore condition (f) together with our condition on $x$ ensures that
$\hash \{ m =1,...,s-1 \mid \nu_1+\dots+ \nu_{b_m-1}+k_m \le \bnu_t\} = \hash \{m =2,...,s \mid \nu_1+\dots+ \nu_{a_m-1}+\lambda(\nu)_m + 1 - i_m \le \bnu_t\}.$
Combining these observations together with Lemma~\ref{L:upsilonbijects} we have
\begin{eqnarray*}
\tau(t_{a,i,b,k;\bnu_t}^{(r)}) &=& \sum_{s=1}^r (-1)^{r-s} \sum_{(a_m,i_m, j_m, b_m, k_m, l_m)_m \in \X_{a,i,b,k; \bnu_t}^{(r,s)}} (-1)^{\hash_{k,\bnu_t}}\tau (\prod_m e_{a_m,i_m, j_m, b_m, k_m, l_m}) \\
& = & (\beta)^{i+k}(-1)^{r-\frac{\lambda_k - \lambda_i}{2}} \sum_{s=1}^r (-1)^{r-s} \sum_{(a_m,i_m, j_m,b_m, k_m, l_m)_m \in \X_{b,k',a,i';\bnu_t}^{(r,s)}} (-1)^{\hash_{k,x}}\prod_m e_{a_m,i_m, j_m, b_m, k_m, l_m}\\
& = & (\beta)^{i+k}(-1)^{r-\frac{\lambda_k - \lambda_i}{2}} t_{b, k',a,i';\bnu_t}^{(r)}
\end{eqnarray*}
This completes the proof.
\end{proof}
\begin{Lemma}
For $ s_{a,b} < r \le s_{a,b} + \min(\lambda(\nu)_a, \lambda(\nu)_b)$ the linear parts of
\begin{eqnarray}
\begin{array}{rcl}
 (-1)^{r-1}t_{a,i,a,j; \bnu_{a-1}}^{(r)} & & \text{ for } a=1,...,m \\
  (-1)^{r-1}t_{a,i,b,k; \bnu_{b-1}}^{(r)} & & \text{ for } 1\le a < b \le m \\
  (-1)^{r-1}t_{b,k,a,i; \bnu_{b-1}}^{(r)} & & \text{ for } 1\le a < b \le m 
\end{array}
\end{eqnarray}
 respectively are 
 \begin{eqnarray}
\begin{array}{rcl}
c_{a,i,a,i}^{(r-1)}   & & \text{ for } a=1,...,m \\
c_{a,i,b,k}^{(r-1)}   & & \text{ for } 1\le a < b \le m \\
c_{b,k,a,i}^{(r-1)}   & & \text{ for } 1\le a < b \le m 
\end{array}
\end{eqnarray}
In particular $\theta$ fulfills the properties of \cite[Theorem 2.4]{To23}.
\end{Lemma}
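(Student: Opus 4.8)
The plan is to read off the linear part of $t_{a,i,b,k;x}^{(r)}$ directly from its defining formula \eqref{e:invaraintsdef}. Every summand indexed by a value $s\ge 2$ is a product of at least two root vectors $e_{a_u,i_u,j_u;b_u,k_u,l_u}$, hence lies in $S^{\ge 2}(\g(\geez))$ and contributes nothing in degree one. Consequently the linear part of $t_{a,i,b,k;x}^{(r)}$ is exactly its $s=1$ contribution, and in particular it does not depend on the parameter $x$.

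For $s=1$ the constraints simplify drastically: conditions (c), (d) and (f) are indexed by $u=1,\dots,s-1$, which is empty, hence vacuous; the combinatorial sign exponent is the cardinality of a subset of $\{1,\dots,s-1\}$, hence equals $0$; condition (e) forces $(a_1,i_1,b_1,k_1)=(a,i,b,k)$; condition (a) becomes $2(l_1-j_1)+\lambda(\nu)_a-\lambda(\nu)_b=2(r-1)$; and condition (b) is then automatic since $r\ge 1$, which holds because $s_{a,b}<r$. Therefore the $s=1$ term equals $(-1)^{r-1}\sum e_{a,i,j_1;b,k,l_1}$, the sum running over all pairs $(j_1,l_1)$ satisfying $2(l_1-j_1)+\lambda(\nu)_a-\lambda(\nu)_b=2(r-1)$. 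Comparing with the definition \eqref{e:cijrdefinition} of $c_{a,i,b,k}^{(r-1)}$ --- which is legitimate precisely because the hypothesis $s_{a,b}<r\le s_{a,b}+\min(\lambda(\nu)_a,\lambda(\nu)_b)$ is exactly the range in which $c_{a,i,b,k}^{(r-1)}$ is defined by Lemma~\ref{L:centraliserlemma} --- this sum equals $(-1)^{r-1}c_{a,i,b,k}^{(r-1)}$. Multiplying by the prefactor $(-1)^{r-1}$ shows that the linear part of $(-1)^{r-1}t_{a,i,b,k;x}^{(r)}$ is $c_{a,i,b,k}^{(r-1)}$; specialising $x$ and the quadruple $(a,i,b,k)$ according to \eqref{e:intermediatemap} yields the three displayed identities.

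For the concluding assertion I would recall that \cite[Theorem~2.4]{To23} asks $\theta$ to be a $\tau$-equivariant linear map $\g^e\to S(\g,e)$ whose composition with the projection onto leading (degree-one) terms is the identity of $\g^e$. That the image lies in $S(\g,e)$ and that $\theta$ is $\tau$-equivariant is Proposition~\ref{P:tauequivariance}; the computation above shows the leading term of $\theta(c_{a,i,b,k}^{(r-1)})$ is $c_{a,i,b,k}^{(r-1)}$, and since these elements form a basis of $\g^e$ by Lemma~\ref{L:centraliserlemma}, the required section property follows. The only real work is the bookkeeping in the middle paragraph --- confirming that the $s=1$ constraints collapse to the single homogeneity condition appearing in \eqref{e:cijrdefinition} and that the range hypothesis makes $r-1$ an admissible superscript --- but this is a routine verification with no hidden difficulty.
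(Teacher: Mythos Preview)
Your proof is correct and follows essentially the same approach as the paper: isolate the $s=1$ contribution as the linear part, observe that the remaining constraints collapse to condition~(a) with $(a_1,i_1,b_1,k_1)=(a,i,b,k)$, and identify the resulting sum with $c_{a,i,b,k}^{(r-1)}$ via \eqref{e:cijrdefinition}. You are slightly more careful than the paper in explaining why condition~(b) is automatic and in spelling out the final sentence about \cite[Theorem~2.4]{To23}, but the mathematical content is identical.
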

\begin{proof}
The linear part of \ref{e:invaraintsdef} corresponds to $s=1$. Conditions (b)-(d) as well as condition (f) on the indices in  \ref{e:invaraintsdef} become redundant when $s=1$ whereas combining conditions (a) and (e) gives the sum
\begin{eqnarray}
(-1)^{r-1}\sum_{ 2r-1 = 2(l-j) + \lambda(\nu)_{a}- \lambda(\nu)_{b}} e_{a,i,j;b,k,l}
\end{eqnarray} which is equal to $(-1)^{r-1}c_{a,i,b,k}^{(r-1)}$ by definition \ref{e:cijrdefinition}.
\end{proof}

\section{Dirac reduction for shifted Yangians}
\label{s:DracredcutionforshiftedYangians}

\subsection{Parabolic presentations for shifted current Lie--Poisson algebras} 
\label{ss:CSpresentationsforshifts}
In this section we describe the parabolic Poisson presentations of shifted current Lie algebras. This is simultaneously a generalisation of \cite[\textsection 3.2]{To23}, which implicitly treated the shape $(1^n)$, and is a semiclassical analogue of the results of \cite{BK05}.

We will repeatedly use the notion of a presentation of a Poisson algebra which was briefly explained before the statement of Proposition~\ref{P:someprop}.

Throughout this section we fix $n > 0$ and a symmetric $n\times n$ shift matrix $\sigma$. Pick an admissible shape $\nu = (\nu_1,...,\nu_m)$ for $\sigma$, and recall the relative shift matrix $\sigma(\nu)$, as well as the notation $\bnu_a = \sum_{i=1}^a \nu_i$, introduced in Section~\ref{ss:admissibleshapes}.


The {\it current algebra} is the Lie algebra $\cc_n := \gl_n \otimes \C[t]$. We write $e_{i,j}$ for standard matrix units and $e_{i,j}t^r := e_{i,j} \otimes t^r$. A basis is given by $\{e_{i,j} t^r \mid 1\leq i , j \leq n, \ r \ge 0\}$.

We will need a parabolic labelling of the basis of $\cc_n(\sigma)$. Define elements
\begin{eqnarray}
e_{a,i,b,j}t^r := e_{\bnu_{a-1} + i, \bnu_{b-1} + j} t^r.
\end{eqnarray}
We warn the reader that the notation used here for matrix units is different from that appearing in Section~\ref{ss:Dynkinandcentraliser}.

We define the {\it shifted current algebra} $\cc_n(\sigma)$ to be the subalgebra spanned by 
\begin{eqnarray}
\label{e:shiftedcurrentgens}
\{e_{a,i,b,j}t^r \mid 1\le a,b\le m, \ 1 \le i \le \nu_a, \ 1 \le j \le \nu_b, \ s_{a,b} \le r \}.
\end{eqnarray}
\begin{Lemma}
\label{L:utgenslemma}
The Lie subalgebra $^{\nu}\u_n(\sigma) \subseteq \cc_n(\sigma)$ which is spanned by elements \eqref{e:shiftedcurrentgens} with $a< b$, has generators
\begin{eqnarray}
\label{e:utcurrentsgens}
\{e_{a,i,j; r}  \mid 1\le i < m, 1 \le i \le \nu_{a}, \ 1\le j \le \nu_{a+1}, \ s(\nu)_{a,a+1} \le r \}
\end{eqnarray}
and relations
\begin{eqnarray}
\label{e:utcurrentrels1}
\big[e_{a,i,j; r}, e_{b,h,k; s}\big] &=& 0 {\text{ \ \ \ for \ \ \ } |i-j| \ne 1\text{ or if } b=a+1 \text{ and } h \neq j,}
\end{eqnarray}
\begin{eqnarray}
\label{e:utcurrentrels2}
{}\ \ \big[e_{a,i,j; r}, e_{a+1,h,k; s+1}\big] - \big[e_{a,i,j; r+1}, e_{a+1,h,k; s}\big] &=&0,\vspace{8pt}\\
\label{e:utcurrentrels3}
{}\Big[e_{i; r_1} \big[e_{i; r_2}, e_{j;r_3} \big]\Big] + \Big[e_{i; r_2} \big[e_{i; r_1}, e_{j;r_3} \big]\Big] &=& 0 \text{ \ \ \ for all } |i-j| = 1.
\end{eqnarray}
\end{Lemma}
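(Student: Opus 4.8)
The plan is to prove Lemma~\ref{L:utgenslemma} by the standard "compare two presentations" strategy: let $^\nu\widehat{\u}_n(\sigma)$ denote the abstract Lie algebra defined by the generators \eqref{e:utcurrentsgens} and relations \eqref{e:utcurrentrels1}--\eqref{e:utcurrentrels3}. First I would check that the assignment $e_{a,i,j;r} \mapsto e_{a,i,a+1,j}t^r \in {}^\nu\u_n(\sigma)$ respects the relations, giving a Lie algebra surjection $^\nu\widehat{\u}_n(\sigma) \onto {}^\nu\u_n(\sigma)$ (surjectivity because the chosen elements generate, which is the analogue of the statement that consecutive $\sl_2$-type root vectors generate the nilradical of a parabolic — here one expresses $e_{a,i,b,j}t^r$ for $b>a+1$ as an iterated bracket $[e_{a,i,j_{a+1};s_{a,a+1}},[e_{a+1,\cdot,\cdot},\dots]]$, using the shift-matrix additivity \eqref{e:shiftmatrixdefn} to get the degrees to add correctly). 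Then the real content is to show this surjection is injective, equivalently that $^\nu\widehat{\u}_n(\sigma)$ is spanned by the expected PBW-type monomials, i.e. by iterated brackets indexed by $\{(a,i,b,j,r): a<b, s_{a,b}\le r\}$.

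For the injectivity step I would proceed exactly as in \cite[\textsection 3.2]{To23} and the semiclassical analogue of \cite{BK05}: define, inside $^\nu\widehat{\u}_n(\sigma)$, higher elements $e_{a,i,b,j;r}$ for $b>a+1$ by the iterated-bracket recursion mirroring the one in the target, and then show by induction on $b-a$ (using \eqref{e:utcurrentrels1}, \eqref{e:utcurrentrels2} and the Serre-type relation \eqref{e:utcurrentrels3}) that these elements are independent of the bracketing choices and satisfy $[e_{a,i,b,j;r},e_{c,h,d,k;s}]=\delta_{b,c}\delta_{j,h}e_{a,i,d,k;r+s}-\delta_{a,d}\delta_{i,k}e_{c,h,b,j;r+s}$ modulo lower-weight terms, so that the $e_{a,i,b,j;r}$ span $^\nu\widehat{\u}_n(\sigma)$ as a vector space. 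Comparing with the known basis of $^\nu\u_n(\sigma)$ (the matrix units $e_{a,i,b,j}t^r$ with $a<b$, $s_{a,b}\le r$, from \eqref{e:shiftedcurrentgens}), the spanning set has the right size, so the surjection is an isomorphism.

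The main obstacle, as usual in these Brundan--Kleshchev-type arguments, is the bookkeeping in the inductive step: one must verify that the higher bracket elements are well-defined (bracketing-independent) and that the "straightening" relations needed to reduce an arbitrary monomial to the spanning set all follow from \eqref{e:utcurrentrels1}--\eqref{e:utcurrentrels3}, with particular care where two chains of indices interact across a single simple step $a \leadsto a+1$ (this is where the Serre relation \eqref{e:utcurrentrels3} is essential) and where the shift-matrix degree constraints $s_{a,b}\le r$ must be respected when splitting $r = s_{a,c} + (r-s_{a,c})$. Since the unshifted, non-parabolic case is \cite[\textsection 3.2]{To23} and the parabolic bookkeeping is a transcription of \cite{BK05}, I would present the proof as "following \cite[\textsection 3.2]{To23} and \cite{BK05} verbatim, mutatis mutandis" and only spell out the points where the shift matrix $\sigma$ or the parabolic shape $\nu$ genuinely change the combinatorics.
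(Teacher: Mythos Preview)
Your proposal is correct and follows essentially the same strategy as the paper: define the abstract Lie algebra $^\nu\widehat{\u}_n(\sigma)$ by generators and relations, construct the obvious surjection onto $^\nu\u_n(\sigma)$, define higher elements $e_{a,i,b,j;r}$ by iterated brackets, establish the commutation relation $[e_{a,i,b,j;r},e_{c,k,d,l;s}]=\delta_{b,c}\delta_{j,k}e_{a,i,d,l;r+s}-\delta_{a,d}\delta_{i,l}e_{c,k,b,j;r+s}$ (note: this holds on the nose, not merely modulo lower terms), and deduce that the $e_{a,i,b,j;r}$ span, so the surjection is an isomorphism.

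The paper does make one simplification you missed: before doing any of the inductive bookkeeping, it observes that the linear map $e_{a,i,b,j}t^r \mapsto e_{a,i,b,j}t^{r-s_{a,b}}$ gives a Lie algebra isomorphism $^\nu\u_n(\sigma)\to{}^\nu\u_n(\underline{0})$ (using the additivity \eqref{e:shiftmatrixdefn}), so it suffices to treat the unshifted case $\sigma=\underline{0}$. This removes exactly the degree-constraint bookkeeping you flagged as the main obstacle. After this reduction, the paper follows steps (1)--(7) of the proof of \cite[Lemma~6.7]{BK05} verbatim to get the bracket formula, and then runs a short filtration argument (generators in degree $1$) to show spanning. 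Your plan to carry $\sigma$ through would also work, but the reduction to $\sigma=\underline{0}$ is cleaner and worth adopting.
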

\begin{proof}
Write $\underline{0}$ for the $n\times n$ zero matrix. It follows from \eqref{e:shiftmatrixdefn} that the linear map $^{\nu}\u_n(\sigma) \to ^{\nu}\u_n(\underline{0})$ defined by $e_{a,i,b,j}t^r \mapsto e_{a,i,b,j}t^{r-s_{a,b}}$ is a Lie algebra isomorphism and so it suffices to prove the current lemma when $\sigma = \underline{0}$.

Let $^{\nu}\h\u_n$ be the Lie algebra with generators \eqref{e:utcurrentsgens} and relations \eqref{e:utcurrentrels1}--\eqref{e:utcurrentrels3}, with $\sigma = \underline{0}$. We inductively define elements $e_{a,i,b,j;r} \in {}^{\nu}\h\u_n$ by setting $e_{a,i,a+1,j;r} := e_{a,i,j;r}$ and $e_{a,i,b,j;r} := [e_{a,i,k,b-1; r}, e_{b-1,k,j ; 0}]$ for $1\le a < b \le m$ and all admissible $i,j$. There is a homomorphism $^{\nu}\h\u_n \onto {}^{\nu}\u_n(\underline 0)$ given by $e_{a,i,j;r} \mapsto e_{a,i,a+1,j}t^r$ and, in order to show that it is an isomorphism, we show that $^{\nu}\h\u_n$ is spanned by the elements $\{e_{a,i,b,j;r} \mid 1\le a<b\le m, \ 1 \le i \le \nu_a, \ 1\le j \le \nu_b \ 0 \le r\}$.

Following (1)--(7) in the proof of \cite[Lemma~6.7]{BK05} verbatim 
we have for all $a,b,c,d,j,k,l,r,s$
\begin{eqnarray}
\label{e:haturelations}
[e_{a,i,b,j;r} , e_{c,k,d,l;s}] = \delta_{b,c}\delta_{j,k} e_{i,l;r+s} - \delta_{a,d}\delta_{i,l} e_{k,j;r+s}.
\end{eqnarray}

Define an ascending filtration on $\h\u_n = \bigcup_{d > 0} \F_d \h\u_n$ satisfying $\F_{d} \h\u_n = \sum_{\substack{d_1 + d_2 = d}} [\F_{d_1} \h\u_n, \F_{d_2} \h\u_n]$  by placing $e_{a,i,j;r}$ in degree 1. We prove by induction that $\F_d\h\u_n$ is spanned by elements $e_{a,i,b,j;r}$ with $j-i \le d$. The base case $d = 1$ holds by definition. For $d_1 + d_2 = d > 1$ we know by the inductive hypothesis that $\F_{d_1} \h\u_n$ and $\F_{d_2} \h\u_n$ are spanned by elements $e_{a,i,b,j;r}$. Using \eqref{e:haturelations} we complete the induction, which finishes the proof. 
\end{proof}

Since $\cc_n(\sigma)$ is a Lie algebra the symmetric algebra $S(\cc_n(\sigma))$ carries a natural Poisson structure.

\begin{Theorem}
\label{T:CSshiftedcurrents1}
Let $\sigma$ be a symmetric shift matrix $\nu=(\nu_1,\dots,\nu_m)$ an admissible shape and let $\cc_n(\sigma)$ denote the shifted current algebra. Then $S(\cc_n(\sigma))$ is Poisson generated by
\begin{eqnarray}
\label{e:shiftedcurrentfreegens}
\begin{array}{l}
\{d_{a,i,j;r} \mid 1\le a \le m, 1\le i,j \le \nu_a, \ 0\le r\}\\ \hspace{60pt}  \cup \{e_{a,i,j;r} \mid 1\le a < m, 1\le i < \nu_a,1\le j < \nu_{a+1}, \ s_{i,i+1} \le r\} \\ \hspace{120pt} \cup \{f_{a,i,j;r} \mid 1\le a < m, 1\le i < \nu_{a+1},1\le j < \nu_{a}, \ s_{i+1,i} \le r\}  \end{array}
\end{eqnarray}
subject to relations
\begin{eqnarray}
\setlength{\itemsep}{4pt}
\label{e:shiftedcurrentfreerels1}
& & \big\{d_{a,i,j;r}, d_{b,h,k;s}\big\} = \delta_{a,b} (\delta_{h,j}d_{a,i,k;r+s}-\delta_{i,k}d_{a,h,j;r+s})\\
\label{e:shiftedcurrentfreerels3.5}
& & \big\{e_{a,i,j; r}, f_{b,h,k;s}\big\} = \delta_{a,b}( \delta_{h,j}d_{a,i,k; r+s} - \delta_{i,k}d_{a+1,h,j; r+s})\\
\label{e:shiftedcurrentfreerels2}
& & \big\{d_{a,i,j;r}, e_{b,h,k;s}\big\} = \delta_{a,b}\delta_{h,j}e_{a,i,k;r+s} - \delta_{a,b+1}\delta_{i,k} e_{b,h,j; r+s},\\
\label{e:shiftedcurrentfreerels3}
& & \big\{d_{a,i,j;r}, f_{b,h,k;s}\big\} = \delta_{a,b+1}\delta_{h,j}f_{b,i,k;r+s} - \delta_{a,b}\delta_{i,k} f_{a,h,j; r+s},\\
\label{e:shiftedcurrentfreerels4}
&  &\big\{e_{a,i,j; r}, e_{a+1,h,k; s + 1}\big\} - \big\{e_{a,i,j; r+1}, e_{a+1,h,k; s}\big\} = 0,\\
\label{e:shiftedcurrentfreerels5}
& & \big\{f_{a,i,j; r}, f_{a+1,h,k; s + 1}\big\} - \big\{f_{a,i,j; r+1}, f_{a+1,h,k; s}\big\} = 0,\\
\label{e:shiftedcurrentfreerels6}
& & \big\{e_{a,i,j;r}, e_{b,h,k;s}\big\} = 0 \text{ for } |i-j| \ne 1 \text{ or if } b=a+1 \text{ and } h \neq j,\\
\label{e:shiftedcurrentfreerels7}
& & \big\{f_{a,i,j;r}, f_{b,h,k;s}\big\} = 0 \text{ for } |i-j| \ne 1 \text{ or if } b=a+1 \text{ and } i \neq k,\\
\label{e:shiftedcurrentfreerels8}
& & \Big\{e_{a,i,j; r_1}, \big\{e_{a,h,k; r_2}, e_{b,f,g;r_3}\big\}\Big\} + \Big\{e_{a,i,j; r_2}, \big\{e_{a,h,k; r_1}, e_{b,g,f;r_3}\big\}\Big\} = 0 \text{ for } |a-b| = 1,\\
\label{e:shiftedcurrentfreerels9}
& & \Big\{f_{a,i,j; r_1}, \big\{f_{a,h,k; r_2}, f_{b,g,f; r_3}\big\}\Big\} + \Big\{f_{a,i,j; r_2}, \big\{f_{a,h,k; r_1}, f_{b,g,f; r_3}\big\}\Big\} = 0 \text{ for } |a-b| = 1.
\end{eqnarray}
\end{Theorem}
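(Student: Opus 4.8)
The plan is to reduce everything to the unshifted case $\sigma = \underline 0$ by the isomorphism $e_{a,i,b,j}t^r \mapsto e_{a,i,b,j}t^{r - s_{a,b}}$ (compare the first paragraph of the proof of Lemma~\ref{L:utgenslemma}), so from now on I work with $S(\cc_n)$ and the unshifted relative labelling. Write $\widehat y$ for the Poisson algebra presented by the generators \eqref{e:shiftedcurrentfreegens} and relations \eqref{e:shiftedcurrentfreerels1}--\eqref{e:shiftedcurrentfreerels9}. Since all of these relations hold in $S(\cc_n)$ after the obvious identification $d_{a,i,j;r}\mapsto e_{a,i,a,j}t^r$, $e_{a,i,j;r}\mapsto e_{a,i,a+1,j}t^r$, $f_{a,i,j;r}\mapsto e_{a+1,i,a,j}t^r$, there is a canonical surjective Poisson homomorphism $\pi : \widehat y \onto S(\cc_n)$. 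The first step is to record, inside $\widehat y$, inductive definitions of higher generators ${}^{\nu}e_{a,i,b,j;r}$ and ${}^{\nu}f_{a,i,b,j;r}$ for $a<b$ (mimicking the definitions in the proof of Lemma~\ref{L:utgenslemma}), and of a full set of ``$d$-type'' elements. These will be the putative preimages of a PBW monomial basis.

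The second step is a spanning argument: I claim that $\widehat y$ is spanned, as a commutative algebra, by ordered monomials in the elements $d_{a,i,j;r}$, ${}^{\nu}e_{a,i,b,j;r}$, ${}^{\nu}f_{a,i,b,j;r}$. This is where the bulk of the work lies, and it proceeds exactly as the filtration argument at the end of the proof of Lemma~\ref{L:utgenslemma}: put all generators in degree $1$, build the induced ascending filtration on the Lie-subalgebra part, and show by induction on degree that every bracket of two higher generators is again a linear combination of higher generators, using relations \eqref{e:shiftedcurrentfreerels1}--\eqref{e:shiftedcurrentfreerels7} (the analogues of \eqref{e:haturelations}) to collapse the ``width'' $|a-b|$. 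The Serre-type relations \eqref{e:shiftedcurrentfreerels8}--\eqref{e:shiftedcurrentfreerels9} are needed precisely to make the base case of this induction work for the nilpotent radicals, just as in \cite[Lemma~6.7]{BK05}. Once the Lie-algebra part is spanned by the higher root vectors, the Poisson (= symmetric algebra) part is spanned by ordered monomials in them.

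The third step closes the argument: under $\pi$ these ordered monomials map to the PBW monomial basis of $S(\cc_n)$ coming from the basis $\{e_{a,i,b,j}t^r\}$ of $\cc_n$, which is linearly independent. Hence the spanning set of $\widehat y$ is in fact a basis, so $\ker\pi = 0$ and $\pi$ is an isomorphism. Undoing the shift gives the statement for $S(\cc_n(\sigma))$.

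\textbf{Main obstacle.} The genuinely delicate point is the spanning argument in step two: one must verify that the brackets $\{{}^{\nu}e_{a,i,b,j;r}, {}^{\nu}e_{c,k,d,l;s}\}$ (and the mixed $e$--$f$ and $d$--$e$--$f$ brackets) close up correctly among the higher generators, i.e. that the analogue of \eqref{e:haturelations} holds verbatim in $\widehat y$, and that the degree induction on the width $|a-b|$ terminates. This is a faithful transcription of steps (1)--(7) of \cite[Lemma~6.7]{BK05} into the Poisson setting, but it is the only place requiring real bookkeeping; everything else is formal. A secondary technical nuisance is keeping the three families of $d$-generators consistent across the blocks so that relation \eqref{e:shiftedcurrentfreerels3.5} produces the correct $d_{a+1,\bullet}$ terms — but this again mirrors the unshifted computation.
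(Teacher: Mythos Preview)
Your overall strategy (define the abstractly presented algebra $\widehat y$, build a surjection to $S(\cc_n(\sigma))$, produce higher root vectors inductively, prove they span via a length/width filtration, then conclude injectivity from PBW on the target) is exactly what the paper does, and your identification of the spanning step as the main obstacle is accurate.

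However, your very first move --- reducing to $\sigma=\underline 0$ via the map $e_{a,i,b,j}t^r \mapsto e_{a,i,b,j}t^{r-s_{a,b}}$ --- is wrong, and everything downstream is then carried out in the wrong algebra. That map is a Lie algebra isomorphism on the strictly upper (or lower) triangular subalgebra $^{\nu}\u_n(\sigma)$, which is precisely how it is used in Lemma~\ref{L:utgenslemma}, but it does \emph{not} extend to all of $\cc_n(\sigma)$. The problem is the $e$--$f$ bracket: in $\cc_n(\sigma)$ one has
\[
[e_{a,i,a+1,j}t^{r},\, e_{a+1,h,a,k}t^{s}] \;=\; \delta_{j,h}\, e_{a,i,a,k}t^{r+s} \;-\; \delta_{i,k}\, e_{a+1,h,a+1,j}t^{r+s},
\]
whereas applying your shift to each factor and then bracketing in $\cc_n(\underline 0)$ yields diagonal terms with exponent $r+s - s_{a,a+1} - s_{a+1,a}$. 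Since the diagonal generators are not shifted ($s_{a,a}=0$), these disagree whenever $s_{a,a+1}>0$. Equivalently, at the level of presentations, relation~\eqref{e:shiftedcurrentfreerels3.5} is not preserved by your proposed relabelling of generators. So $\cc_n(\sigma)$ is \emph{not} isomorphic to $\cc_n$ by this map, and you cannot transport the unshifted argument wholesale.

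The paper's proof avoids this trap by working directly with $\hcc_n(\sigma)$ in the shifted setting: it uses relations \eqref{e:shiftedcurrentfreerels1}--\eqref{e:shiftedcurrentfreerels3} to split $\hcc_n(\sigma)$ as a direct sum of the diagonal part and two nilpotent subalgebras $\u_n^{\pm}(\sigma)$, and only \emph{then} applies the shift reduction, separately to each nilpotent piece, via Lemma~\ref{L:utgenslemma}. Your plan becomes correct if you simply drop the global unshifting step and run your spanning argument directly in $\widehat y = \hcc_n(\sigma)$; the inductive definitions of the higher $e$- and $f$-elements and the filtration argument go through verbatim with the shifts in place, exactly as in the paper.
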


\begin{proof}
By \cite[(3.1)]{To23} it suffices to show that $\cc_n(\sigma)$ is generated as a Lie algebra by \eqref{e:shiftedcurrentfreegens} subject to relations \eqref{e:shiftedcurrentfreerels1}--\eqref{e:shiftedcurrentfreerels9}. Let $(\hcc_n(\sigma), \{\cdot, \cdot\})$ denote the Lie algebra with these generators and relations. Define a map from the set \eqref{e:shiftedcurrentfreegens} to $\cc_n(\sigma)$ by $d_{a,i,j;r} \mapsto e_{a,i,a,j}t^r, \ e_{a,i,j;r} \mapsto e_{a,i,a+1,j}t^r, \ f_{a,i,j;r} \mapsto e_{a+1,i, a,j} t^r$.
One can easily verify using \eqref{e:haturelations} that this extends to a surjective Lie algebra homomorphism $\h\cc_n(\sigma) \onto \cc_n(\sigma)$. To show that this is an isomorphism it suffices to show that the elements
\begin{eqnarray}
\label{e:hSgens}
\{e_{a,i,b,j;r} \mid 1 \le a,b\le m, \ 1 \le i \le \nu_a, \ 1 \le b \le \nu_{b} \ s_{a,b}\le r\} \subseteq \hcc_n(\sigma)
\end{eqnarray}
defined inductively by setting $e_{a,i,a+1,j; r} := e_{i,r}$, $e_{a+1, i,a,j; r} := f_{a,i,j;r}$ and
\begin{eqnarray}
\label{e:shiftedcurrenthighereij}
& e_{a,i,b,j;r} := \{e_{a,i,b-1,k;r-s_{b-1,b}}, e_{b-1,k,j; s_{b,b-1}}\}  & \text{ for } a < b;\\
\label{e:shiftedcurrenthigherfij}
& e_{a,i,b,j;r} := \{f_{a-1,i,k; s_{a,a-1}}, e_{a-1,k,b, j; r-s_{a,a-1}}\}  & \text{ for } a > b.
\end{eqnarray}
form a spanning set. Using \eqref{e:shiftedcurrentfreerels1},\eqref{e:shiftedcurrentfreerels2}, \eqref{e:shiftedcurrentfreerels3} and a simple inductive argument one can see that $\h\cc_n(\sigma)$ is a direct sum of three subalgebras: the diagonal subalgebra, spanned by the elements $d_{a,i,j;r}$, and the upper and lower triangular subalgebras $^{\nu}\u_n^+(\sigma)$ and $^{\nu}\u_n^-(\sigma)$ generated by the elements $e_{a,i,j; r}$, respectively by the elements $f_{a,i,j; r}$.

In order to complete the proof it suffices to show that $\u_n^+(\sigma)$ and $\u_n^-(\sigma)$ are spanned by the elements defined in \eqref{e:shiftedcurrenthighereij} and \eqref{e:shiftedcurrenthigherfij} respectively. 
Since the argument is identical for $\u_n^+(\sigma)$ and $\u_n^-(\sigma)$ we only need to consider the former, where the claim follows from Lemma~\ref{L:utgenslemma} 
\end{proof}

The Dirac reduced algebra in the following theorem should be viewed as a {\it twisted shifted current Lie--Poisson algebra} (see \cite[Remark~2.3]{To23}).
Recall that for a shape $\nu=(\nu_1, \dots, \nu_m)$ and a four-tuple $(a,i,b,j)$ $1 \le,a,b \le m, \ 1 \le i \le \nu_a,\ 1 \le j \le \nu_b$ we write $(a,i',b,j'):=(a,\nu_a+1-i,b,\nu_b+1-j)$.
\begin{Theorem}\label{T:twistedcurrentalgpresenrtation}
\label{dirredcurrent}
If $\beta = \pm 1$ and $\nu$ is a $(\beta, \sigma)$-admissible shape then $S(\cc_n(\sigma))$ admits an involutive Poisson automorphism
\begin{eqnarray}
\label{e:tauonshiftedcurrents}
\tau : e_{a,i,b,j}t^r \mapsto (\beta)^{i+j}(-1)^{r - 1 + s(\nu)_{a,b}} e_{b, j' ,a,i'}t^r.
\end{eqnarray}
The Dirac reduction $R(S(\cc_n(\sigma), \tau) = S(\cc_n(\sigma)^\tau)$ is Poisson generated by
\begin{align}
\label{e:shiftedcurrentinvariantsfreegen}
\begin{array}{l}
\{\eta_{a,i,j;r} \mid 1 \le a \le m, \ 1 \le i,j \le \nu_{a}, \ r \geq 0 \} \\
 \hspace{40pt} \cup \{\theta_{a,i,j;r} \mid 1 \le a \le m-1, \ 1 \le i \le \nu_{a}, \ 1 \le j \le \nu_{a+1}, \ r \geq s_{a,a+1} \}
 \end{array}
\end{align}
subject to the relations
\begin{align}\label{currel0}
\eta_{a,i,j,r}=(\beta)^{i+j}(-1)^{r-1}\eta_{a,j',i',r}
\end{align}
\begin{align} \label{currel1}
\left\{\eta_{a,i,j;r},\eta_{b,h,k;s} \right\}=\delta_{a,b} \frac{1}{2}(\delta_{h,j}\eta_{a,i,k;r+s}-\delta_{i,k}\eta_{a,h,j;r+s}+(\beta)^{h+k}(-1)^{s-1}(\eta_{a,i,h';r+s}\delta_{k',j}-\delta_{i,h'}\eta_{a,k',j;r+s}))
\end{align}
\begin{align}\label{currel2}
\left\{\eta_{a,i,j;r},\theta_{b,h,k;s} \right\}&=\frac{1}{2}( \delta_{a,b}\delta_{h,j} \theta_{a,i,k;r+s}-\delta_{a,b+1}\delta_{i,k}\theta_{b,h,j;r+s})\\
&+(\beta)^{i+j}\frac{1}{2}(-1)^{r-1}(\delta_{a,b}\delta_{h,i'}\theta_{a,j',k;r+s}-\delta_{a,b+1}\delta_{k',j}\theta_{b,h,i';r+s}) 
\end{align}
\begin{align}\label{currel3}
    \left\{\theta_{a,i,j;r},\theta_{a,h,k;s} \right\}=\frac{1}{2}(\beta)^{h+k}(-1)^{s_{a,a+1}+s-1}(\delta_{i,h'}\eta_{a+1,k',j;r+s}-\delta_{k,j'}\eta_{a,i,h',r+s}) \textnormal{ for} \ r < s
\end{align}
\begin{align}\label{currel4}
    \{\theta_{a,i,j;r}, \theta_{a+1,h,k;s}\}-\{\theta_{a,i,j;r+1}, \theta_{a+1,h,k;s}\}=0
\end{align}
\begin{align}\label{currel5}
  \{\theta_{a,i,j;r}, \theta_{b,h,k;s}\}=0  \textnormal{ for }  b>a+1  \textnormal{ or if } b=a+1 \textnormal{ and } h \neq j 
\end{align}
\begin{align}\label{currel6}
        \{\theta_{a,i,j;r}, \{\theta_{a,h,k;s}, \theta_{b,f,g;t}\}\} +\{\theta_{a,i,j;s}, \{\theta_{a,h,k;r}, \theta_{b,f,g;t}\}\}=0 \textnormal{ if } \mid a-b \mid =1 \textnormal{ and } r+s \textnormal{ odd}
\end{align}
\begin{align}\label{currel7}
\begin{array}{l}
    \{\theta_{a,i,j;r}, \{\theta_{a,h,k;s}, \theta_{b,f,g;t}\}\} +\{\theta_{a,i,j;s}, \{\theta_{a,h,k;r}, \theta_{b,f,g;t}\}\}\vspace{6pt}\\
    \hspace{80pt} =\frac{1}{2}(\beta)^{i+j}(-1)^{r+s_{a,a+1}}\left( \delta_{a+1,b}\delta_{f,k}\delta_{h,i'}\theta_{a+1,j',g;r+s+t}+\delta_{a,b+1}\delta_{h,g}\delta_{j',k}\theta_{b,f,i';r+s+t} \right) \vspace{10pt} \\
    \hspace{120pt}\textnormal{ if } \mid a-b \mid =1 \textnormal{ and } r+s \textnormal{ even}.
    \end{array}
\end{align}
\end{Theorem}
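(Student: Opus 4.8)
The plan is to derive both assertions from the structure of the Lie algebra $\cc_n(\sigma)$ and of its fixed subalgebra $\cc_n(\sigma)^\tau$. For the first assertion, since every Lie automorphism of $\cc_n(\sigma)$ extends uniquely to a Poisson automorphism of $S(\cc_n(\sigma))$, it is enough to check that the formula \eqref{e:tauonshiftedcurrents} defines an involutive Lie automorphism of $\cc_n(\sigma)$. Well-definedness follows from the symmetry $s(\nu)_{a,b}=s(\nu)_{b,a}$ of the relative shift matrix. Applying $\tau$ twice multiplies $e_{a,i,b,j}t^r$ by $\beta^{(\nu_a+1)+(\nu_b+1)}$ and by $(-1)^{2r-2+2s(\nu)_{a,b}}$: the first factor is $1$ because $(\beta,\sigma)$-admissibility gives $\beta^{\nu_a}=\beta^{\nu_b}=\beta$, the second is visibly $1$, and $i\mapsto i'$ is an involution of the index set, so $\tau^2=\mathrm{id}$. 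That $\tau$ respects the bracket is a direct computation with the structure constants of $\cc_n(\sigma)$; the one point worth isolating is the congruence $s(\nu)_{a,d}\equiv s(\nu)_{a,b}+s(\nu)_{b,d}\pmod 2$, valid for all $a,b,d$ (an equality when $b$ lies between $a$ and $d$, and otherwise the two sides differ by twice a partial sum of off-diagonal entries of $\sigma$), which reconciles the sign produced by the two copies of $\tau$ with the sign in the bracket of the images.

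For the second assertion, the identification $R(S(\cc_n(\sigma)),\tau)=S(\cc_n(\sigma)^\tau)$ of Poisson algebras is the general fact for symmetric algebras with a linear involution recalled in \cite[Remark~2.3]{To23} (the coinvariant ideal is generated by the $(-1)$-eigenspace of $\tau$, so the Dirac reduction collapses onto the symmetric algebra of the $(+1)$-eigenspace, which here is the Lie subalgebra $\cc_n(\sigma)^\tau$). By \cite[(3.1)]{To23} a Poisson presentation of $S(\cc_n(\sigma)^\tau)$ is obtained from a Lie-algebra presentation of $\cc_n(\sigma)^\tau$, so I would reduce to proving that $\cc_n(\sigma)^\tau$ is presented by the generators $\eta_{a,i,j;r}$, $\theta_{a,i,j;r}$ subject to the Lie brackets underlying \eqref{currel0}--\eqref{currel7}. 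Writing $\widehat{\cc}_n(\sigma)^\tau$ for the abstractly presented Lie algebra, I would first check that
\begin{gather*}
\eta_{a,i,j;r}\longmapsto \tfrac12\big(e_{a,i,a,j}t^r+(\beta)^{i+j}(-1)^{r-1}e_{a,j',a,i'}t^r\big),\\
\theta_{a,i,j;r}\longmapsto \tfrac12\big(e_{a,i,a+1,j}t^r+(\beta)^{i+j}(-1)^{r-1+s(\nu)_{a,a+1}}e_{a+1,j',a,i'}t^r\big)
\end{gather*}
defines a Lie homomorphism $\pi\colon \widehat{\cc}_n(\sigma)^\tau\to\cc_n(\sigma)^\tau$; the images lie in $\cc_n(\sigma)^\tau$ by construction, and verifying that \eqref{currel0}--\eqref{currel7} hold for them is the routine (but lengthy) sign computation, with the ubiquitous factors of $\tfrac12$ arising because the bracket of two $\tau$-symmetrizations $x+\tau x$ and $y+\tau y$ breaks into the two $\tau$-conjugate pairs $(\{x,y\}+\tau\{x,y\})$ and $(\{x,\tau y\}+\tau\{x,\tau y\})$, each of which is twice a generator-image. (The case $r=s$ of \eqref{currel3}, not literally covered, is extracted in the same way or deduced from the listed relations.)

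It then remains to see that $\pi$ is an isomorphism, and I would establish injectivity and surjectivity together by a dimension count in each graded degree. Following \cite[Lemma~6.7]{BK05} and the proof of Lemma~\ref{L:utgenslemma}, introduce elements $\theta_{a,i,b,j;r}\in\widehat{\cc}_n(\sigma)^\tau$ for $1\le a<b\le m$ by $\theta_{a,i,a+1,j;r}:=\theta_{a,i,j;r}$ and $\theta_{a,i,b,j;r}:=\{\theta_{a,i,b-1,1;\,r-s(\nu)_{b-1,b}},\theta_{b-1,1,j;\,s(\nu)_{b-1,b}}\}$ for $b>a+1$, derive from \eqref{currel1}--\eqref{currel7} closed bracket formulas among the $\eta$'s and these $\theta$'s, and run a filtration argument on $b-a$ (placing the generators in degree one, as in Lemma~\ref{L:utgenslemma}) to show that $\widehat{\cc}_n(\sigma)^\tau$ is spanned by $\{\eta_{a,i,j;r}\}\cup\{\theta_{a,i,b,j;r}\mid a<b\}$. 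After imposing \eqref{currel0}, the span of the $\eta$-part has, in each degree, dimension at most that of the $\tau$-fixed block-diagonal subspace of $\cc_n(\sigma)$, while the $\theta_{a,i,b,j;r}$ with $a<b$ biject with the remaining basis vectors of $\cc_n(\sigma)^\tau$; since $\pi$ sends this spanning set onto exactly such a basis, both algebras have the same finite dimension in each graded degree, so $\pi$ is an isomorphism. The main obstacle is this spanning argument: unlike in Theorem~\ref{T:CSshiftedcurrents1}, the involution fuses the upper- and lower-triangular parts of $\cc_n(\sigma)$, so the bracket of two $\theta$'s attached to the same pair of adjacent blocks lands among the $\eta$'s (via \eqref{currel3}) rather than staying triangular, and the induction must control the block-diagonal and off-diagonal contributions simultaneously; the Serre-type relations \eqref{currel6} and \eqref{currel7} are precisely what keep the iterated brackets inside the claimed spanning set. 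The remaining difficulty is purely bookkeeping: carrying the powers of $\beta$ and of $-1$ and the primed indices through all of \eqref{currel0}--\eqref{currel7}.
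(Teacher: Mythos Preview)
Your overall plan matches the paper's proof closely: both reduce to a Lie-algebra presentation of $\cc_n(\sigma)^\tau$ via \cite[Remark~2.3, (3.1)]{To23}, define the abstractly presented Lie algebra $\widehat{\cc}_n(\sigma)^\tau$, construct the surjection onto $\cc_n(\sigma)^\tau$, and then prove spanning by a filtration argument with generators in degree~1. Your treatment of the first assertion (that $\tau$ is an involutive automorphism, including the mod-$2$ congruence on $s(\nu)_{a,b}$) is in fact more explicit than the paper's, which simply appeals to \eqref{e:haturelations}.

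The one place where the paper is sharper is the spanning step, precisely where you flag the ``main obstacle''. You propose to derive closed bracket formulas among the $\eta$'s and the higher $\theta_{a,i,b,j;r}$'s and run an induction on $b-a$; but as you note, the $\theta$--$\theta$ brackets feed back into $\eta$'s via \eqref{currel3} and \eqref{currel7}, so such closed formulas are not straightforward to establish directly. The paper bypasses this by observing that in the associated graded algebra for your filtration, the right-hand sides of \eqref{currel1}--\eqref{currel3} and \eqref{currel7} drop in degree and vanish; hence the $\bar\eta$'s span an abelian ideal and the $\bar\theta$'s satisfy exactly the relations of the upper-triangular algebra ${}^{\nu}\u_n(\sigma)$ from Lemma~\ref{L:utgenslemma}. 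One then gets a surjection $\af\oplus {}^{\nu}\u_n(\sigma)\twoheadrightarrow \gr\widehat{\cc}_n(\sigma)^\tau$ from an algebra whose spanning set is already known, and the spanning of $\widehat{\cc}_n(\sigma)^\tau$ follows. This decoupling in the associated graded is the trick that resolves the difficulty you identify; your outline is correct but does not isolate it.

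A minor point: your normalisation $\eta\mapsto\tfrac12(e+\tau e)$ is the one compatible with the factors of $\tfrac12$ in \eqref{currel1}--\eqref{currel7}; the paper writes $\eta\mapsto e+\tau e$, which is a harmless rescaling discrepancy.
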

\begin{proof}
Relation \eqref{e:haturelations} implies that $\tau$ gives a Poisson automorphism. By \cite[Remark~2.3]{To23} we can identify $R(S(\cc_n(\sigma), \tau)$ with $S(\cc_n(\sigma)^\tau)$, so it suffices to check that $S(\cc_n(\sigma)^\tau)$ has the stated Poisson presentation. Use \cite[(3.1)]{To23} to reduce the claim to a statement about the presentation of $\cc_n(\sigma)^\tau$.

Consider the Lie algebra $\hcc_n(\sigma)^\tau$ which is generated by the set \eqref{e:shiftedcurrentinvariantsfreegen} subject to relations \eqref{currel1}--\eqref{currel7}. Make the notation $\theta_{a,i,a+1,j}t^r := e_{a,i,a+1,j}t^r + \tau(e_{a,i,a+1,j}t^r), \ \eta_{a,i,a,j}t^r := e_{a,i,a,j}t^r + \tau(e_{a,i,a,j}t^r)  \in \cc_n(\sigma)^\tau$. We define a map from the set \eqref{e:shiftedcurrentinvariantsfreegensproof} to $\cc_n(\sigma)^\tau$ by sending $\eta_{a,i,j;r} \mapsto \eta_{a,i,a,j}t^{r} \in\cc_n(\sigma)^\tau$, and sending $\theta_{a,i,j;r} \mapsto \theta_{a,i,a+1,j}t^r \in\cc_n(\sigma)^\tau$. One can check that this determines a surjective Lie algebra homomorphism
$\hcc_n(\sigma)^\tau \onto \cc_n(\sigma)^\tau$, indeed, checking that relations \eqref{currel1}--\eqref{currel7} hold amongst the corresponding elements of $\cc_n(\sigma)^\tau$ is a routine calculation using \eqref{e:haturelations} and relation \ref{currel0} follows immediately by definition of $\tau$.

In order to complete the proof of (2) it is sufficient to show that this map is an isomorphism.
This proof is similar to \cite[Theorem 3.3]{To23} but in our case there is a linear relation between the $\eta_{a,i,j;r}$.
 For all admissible $a,b,i,j,$ and $r \geq s_{a,b}$ we inductively define elements
\begin{eqnarray}
\label{e:someotherlabel}
\theta_{a,b,i,j;r} := \{\theta_{a,i,a+1,k;s_{i,i+1}}, \theta_{a+1,k,b, j; r-s_{i,i+1}}\} \in \hcc_n(\sigma)^\tau. 
\end{eqnarray}
where $\theta_{a,i,a+1,j;r} := \theta_{a,i,j;r}$. Let $\{v_1, v_2,\dots \}$ be a subset of  $\{\eta_{a,i,j;r} \mid 1 \le a \le m, \ 1 \le i,j \le \nu_{a}, \ r \geq 0 \}$ such that it is a basis  for the vector space spanned by $\{\eta_{a,i,j;r} \mid 1 \le a \le m, \ 1 \le i,j \le \nu_{a}, \ r \geq 0 \}$ together with the linear relation \ref{currel0}.
It remains to check that $\hcc_n(\sigma)^\tau$ is spanned by 

\label{e:shiftedcurrentinvariantsfreegensproof}
\begin{align}\label{e:shiftedcurrentinvariantsfreegensproof}
\begin{array}{ll}
&\{v_1,v_2, \dots \} \cup \{\theta_{a,i,b,j;r} \mid 1 \le a,b \le m, \ 1 \le i \le \nu_{a}, \ 1 \le j \le \nu_{b}, \ r \geq s_{a,b} \}.
\end{array}
\end{align}

We define a filtration $\hcc_n(\sigma)^\tau = \bigcup_{i > 0} \F_i \hcc_n(\sigma)^\tau$ by placing the generators \eqref{e:shiftedcurrentinvariantsfreegen} in degree 1 and satisfying $\F_{d} \h\cc_n(\sigma)^\tau = \sum_{d_1 + d_2 = d} \{\F_{d_1} \hcc_n(\sigma)^\tau, \F_{d_2} \hcc_n(\sigma)^\tau \}$. By convention $\F_{0} \hcc_n(\sigma)^\tau = 0$. The associated graded Lie algebra $\gr \hcc_n(\sigma)^\tau = \bigoplus_{i > 0} \F_i \hcc_n(\sigma)^\tau / \F_{i-1} \hcc_n(\sigma)^\tau$ is generated by elements 
\begin{eqnarray}
& & \overline{\eta}_{a,i,j;r} := \eta_{a,i,j;r} + \F_{0} \hcc_n(\sigma)^\tau \text{ for } 1\le a \le m, \ 1 \le i,j \le \nu_a \ 0 \le r \\
\label{e:somelabel}
& & \overline{\theta}_{a,i,j;r} := \theta_{a,i,j;r} + \F_0 \hcc_n(\sigma)^\tau \text{ for } 1\le a < m-1, \ 1 \le i \le \nu_a, \ 1 \le j \le \nu_{a+1}, \  \ s_{a,a+1} \le r.
\end{eqnarray}
These generators of $\gr \hcc_n(\sigma)^\tau$ satisfy the top graded components of the relations \eqref{currel0}--\eqref{currel7}. 

Let $\af$ be an abelian Lie algebra with spanning set $\{d_{a,i,j} t^r \mid 1\le a \le m, \ 1 \le i,j \le \nu_{a}, \ 0 \leq r\}$ and relations $d_{a,i,j} t^r=(\beta)^{i+j}(-1)^{r-1}d_{a,\nu_a+1-j,\nu_a+1-i}$. Let $\u_n(\sigma)$ be the Lie subalgebra of $\cc_n(\sigma)$ described in Lemma~\ref{L:utgenslemma}. Then $\af \oplus \u_n(\sigma)$ is a Lie algebra with $\af$ an abelian ideal. Comparing the top components \eqref{currel0}--\eqref{currel7} to \eqref{e:utcurrentrels1}--\eqref{e:utcurrentrels3} we see that there is surjective Lie algebra homomorphism $\af \oplus \u_n(\sigma) \onto \gr \hcc_n(\sigma)^\tau$ defined by $d_{a,i,j} t^{r} \mapsto \overline{\eta}_{a,i,j; r}$ and $e_{a,i,a+1,j}t^r \mapsto \overline{\theta}_{a,i,j;r}$. The algebra $\af \oplus \u_n(\sigma)$ has basis consisting of elements $e_{a,h,b,k}t^s$ where $ 1\le a < b \le m, 1 \le h \le \nu_{a}, \ 1 \le k \le \nu_{b},  \ s \ge s_{a,b}$ together with a basis $\{w_1,w_2,\dots\}$ for the vector space $\mathfrak{a}$ which can be picked such that $w_1 \mapsto v_1$. It follows that $\gr \hcc_n(\sigma)^\tau$ is spanned by elements $\overline{v_1},\overline{v_2},\dots, \overline{\theta}_{a,i,b,j;s}$ where the indices vary in the ranges specified in \eqref{e:shiftedcurrentinvariantsfreegensproof}, and $\overline{\theta}_{a,i,b,j;s}$ defined inductively from \eqref{e:somelabel}, analogously to \eqref{e:someotherlabel}. We deduce that $\hcc_n(\sigma)^\tau$ is spanned by the required elements, which completes the proof.
\end{proof}
\begin{Remark}
It is crucial for this theorem that our shape $\nu$ is $(\beta,\sigma)$-admissible, otherwise $\tau$ is not an involution.
\end{Remark}

Our next result, which we record here for later use, explains the relationship between the shifted current algebras and the centralisers described above. 
\begin{Lemma}
Let $\ve = \pm 1$ and $N > 0$ such that $\ve^N = 1$. Let $\lambda \in \P_\ve(N)$ be an even partition, and let $\sigma$ be the shift matrix determined by $\lambda$, as in Lemma~\ref{L:lambdabij}. Pick a $(-\ve(-1)^{\lambda_1}, \sigma)$-admissible shape and let $\tau : \cc_n(\sigma) \to \cc_n(\sigma)$ be the the involution  \eqref{e:tauonshiftedcurrents}. Abusing notation we also write $\tau : \gl_N \to \gl_N$ for the automorphism described in \eqref{e:taudefinition}.
\label{L:shiftedcurrentandcentraliser}
\begin{enumerate}
\setlength{\itemsep}{4pt}
\item There is a surjective Lie algebra homomorphism
\begin{eqnarray*}
\cc_n(\sigma) & \longtwoheadrightarrow & \gl_N^e  \\
e_{a,i,b,j}t^r & \longmapsto & c_{a,i,b,j}^{(r)}
\end{eqnarray*}
 where $c_{a,i,b,j}^{(r)} := 0$ for $r \ge s_{a,b} + \min(\lambda_a, \lambda_b)$.
The kernel is Poisson generated by $e_{1,i,1,j}t^r$ with $r\ge \lambda_1$, $1 \leq i,j \leq \nu_{1}$ 
\item The homomorphism from (1) restricts to $\cc_n(\sigma)^\tau \onto (\g^e)^\tau$.
\begin{itemize}
\item[(i)] For $\ve=1$ the kernel is generated as a Lie ideal by 
\begin{eqnarray}
\label{e:LAidealgens}
\{\eta_{1,i,j;r} \mid r \ge \lambda_1 \ , 1 \le i,j \le \nu_1\}
\end{eqnarray}
\item[(ii)]For $\ve= -1$ and $\nu_1\ne 1$  the kernel is generated by \eqref{e:LAidealgens}
\item[(iii)] For $\ve =-1$ and $\nu_1 = 1$ the kernel is generated by those elements,  along with
\begin{eqnarray}
\label{e:extrakernelLA}
\{\theta_{1,i,j; \lambda_1 + s_{1,2}} \mid  1\le i \le \nu_1 , \  1 \le j \le \nu_{2} \}.
\end{eqnarray}
\end{itemize} 
\end{enumerate}
\end{Lemma}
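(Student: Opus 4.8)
The plan is to first establish part~(1) --- the surjection $\cc_n(\sigma)\onto\g^e$ (here $\g=\gl_N$) together with its kernel --- and then to take $\tau$-fixed points everywhere.

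For~(1): the assignment $\phi\colon e_{a,i,b,j}t^r\mapsto c_{a,i,b,j}^{(r)}$ is a Lie homomorphism because the structure constants of $\g^e$ in Lemma~\ref{L:centraliserlemma} are literally those of the current algebra \eqref{e:haturelations}; the only point is that each bracket relation must still hold once a term is killed by the truncation convention $c^{(r)}=0$, and this becomes transparent after rewriting the vanishing condition symmetrically. Namely, since $\lambda(\nu)_b=\lambda(\nu)_a+2s(\nu)_{a,b}$ whenever $a\le b$, one has $s(\nu)_{a,b}+\min(\lambda(\nu)_a,\lambda(\nu)_b)=\tfrac12(\lambda(\nu)_a+\lambda(\nu)_b)$, so $c_{a,i,b,j}^{(r)}=0$ precisely when $r\ge\tfrac12(\lambda(\nu)_a+\lambda(\nu)_b)$; the compatibility of $\phi$ with brackets then follows from the triangle inequality for shift matrices together with $\lambda(\nu)_b+|\lambda(\nu)_b-\lambda(\nu)_d|\ge\lambda(\nu)_d$. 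Surjectivity is immediate, and $K:=\ker\phi$ is spanned by the truncated generators $e_{a,i,b,j}t^r$ with $r\ge\tfrac12(\lambda(\nu)_a+\lambda(\nu)_b)$. To see that $K$ equals the Lie ideal generated by $\{e_{1,i,1,j}t^r\mid r\ge\lambda_1\}$: bracketing these with the base-level elements $e_{1,l,b,j}t^{s(\nu)_{1,b}}$ produces every $e_{1,k,b,j}t^q$ with $q\ge\lambda_1+s(\nu)_{1,b}$, and bracketing $e_{a,i,1,k}t^{s(\nu)_{1,a}}$ with those produces a general truncated $e_{a,i,b,j}t^r$; the index arithmetic closes because $s(\nu)_{1,a}+s(\nu)_{a,b}=s(\nu)_{1,b}$ for $a\le b$ and $\lambda(\nu)_a=\lambda_1+2s(\nu)_{1,a}$, and the diagonal correction term appearing when $(a,i)=(b,j)$ is an $e_{1,k,1,k}t^r$ with $r\ge\lambda_1$.

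For~(2): Lemma~\ref{L:tauaction}(iii) gives $\tau$ on $c_{a,i,b,k}^{(r)}$ with sign $(\beta)^{i+k}(-1)^{r-(\lambda(\nu)_b-\lambda(\nu)_a)/2-1}$, which matches the sign $(\beta)^{i+j}(-1)^{r-1+s(\nu)_{a,b}}$ of \eqref{e:tauonshiftedcurrents} because $s(\nu)_{a,b}=\tfrac12(\lambda(\nu)_b-\lambda(\nu)_a)$. Hence $\phi$ is $\tau$-equivariant, so it restricts to a surjection $\cc_n(\sigma)^\tau\onto(\g^e)^\tau$ whose kernel is $K^\tau=K\cap\cc_n(\sigma)^\tau$, spanned by the $\tau$-symmetrisations $e_{a,i,b,j}t^r+\tau(e_{a,i,b,j}t^r)$ with $r\ge\tfrac12(\lambda(\nu)_a+\lambda(\nu)_b)$. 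One now repeats the argument of~(1) inside $\cc_n(\sigma)^\tau$: the $\tau$-symmetrisations of $e_{1,i,1,j}t^r$, $r\ge\lambda_1$, are --- up to relation \eqref{currel0} --- the elements $\eta_{1,i,j;r}$; from them one climbs first to the $\theta_{1,i,j;r}$, $r\ge\lambda_1+s(\nu)_{1,2}$, via $\{\eta_{1,i,l;s},\theta_{1,l,j;s(\nu)_{1,2}}\}$ for a suitable $l$, then to the higher $(1,b)$-symmetrisations by iterated brackets with the generators $\theta_{b-1,k,j;s(\nu)_{b-1,b}}$, and finally to a general $(a,b)$-symmetrisation with $a,b\ge2$ by bracketing the $(1,a)$-symmetrisation at base level $s(\nu)_{1,a}$ with the already-constructed $(1,b)$-symmetrisation --- all cross-terms in these symmetrised brackets vanishing for $a,b\ge2$ by \eqref{e:haturelations}. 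Since every proposed generator visibly lies in $K^\tau$, the reverse inclusion is clear, so the proposed sets do generate $K^\tau$.

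The delicate point, and the source of the three-way split, is the first climbing step. The identity $\{\eta_{1,i,l;s},\theta_{1,l,j;s(\nu)_{1,2}}\}=\tfrac12\theta_{1,i,j;s+s(\nu)_{1,2}}$ holds as soon as $l\ne i'$, which is possible iff $\nu_1\ge2$; this settles case~(i) with $\nu_1\ge2$ and case~(ii). When $\nu_1=1$ we are forced to $l=1=i'$, and then both $\eta_{1,1,1;s}$ and $\{\eta_{1,1,1;s},\theta_{1,1,j;s(\nu)_{1,2}}\}=\tfrac12(1+(-1)^{s-1})\theta_{1,1,j;s+s(\nu)_{1,2}}$ vanish for even $s$; since $(\beta,\sigma)$-admissibility forces $\lambda_1$ odd when $\ve=1$ but $\lambda_1$ even when $\ve=-1$, the climb still reaches every $\theta_{1,1,j;q}$ with $q\ge\lambda_1+s(\nu)_{1,2}$ in case~(i), but in case~(iii) it misses exactly the level $q=\lambda_1+s(\nu)_{1,2}$. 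Adjoining $\{\theta_{1,i,j;\lambda_1+s_{1,2}}\}$ restores precisely that base level, after which $\{\eta_{1,1,1;\lambda_1+1},\theta_{1,1,j;s}\}$ yields all higher $\theta_{1,1,j;q}$ and the rest of~(2) goes through verbatim. I expect this parity bookkeeping in the first block --- tracking which $\tau$-symmetrisations survive and whether the relations of Theorem~\ref{dirredcurrent} let one climb past them --- to be the main obstacle; the remaining manipulations are routine consequences of the shift-matrix identities.
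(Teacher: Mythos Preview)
Your proposal is correct and follows essentially the same strategy as the paper: establish $\tau$-equivariance by matching signs, then climb from the block-$1$ generators using the relations of Theorem~\ref{dirredcurrent}, with the case split arising exactly from the parity of $\lambda_1$ when $\nu_1=1$. The only organisational difference is that the paper packages the climbing as a rank induction (after producing $\theta_{1,\cdot,\cdot;r}$ and $\eta_{2,\cdot,\cdot;r}$ it observes that the subalgebra generated by $\eta_{a,\cdot,\cdot;r},\theta_{a,\cdot,\cdot;r}$ for $a\ge 2$ is again a twisted shifted current algebra of smaller rank, and inducts), whereas you construct all $(a,b)$-symmetrisations directly. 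Both routes work; the induction is slightly cleaner since in case~(iii) it avoids having to separately verify that the diagonal corrections appearing when $a=b$ are absorbed by the ideal, and the paper uses \eqref{currel7} to produce the ``missing'' base-level element $\theta_{2,\cdot,\cdot;\lambda_2+s_{2,3}}$ needed to restart the induction. One small slip: in your third step it is the \emph{direct} pairings $[e_{1,i,a,k}t^{s_{1,a}},e_{1,l,b,j}t^r]$ and $[\tau(\cdot),\tau(\cdot)]$ that vanish for $a,b\ge 2$, while the cross pairings survive and combine to the desired $(a,b)$-symmetrisation --- but this does not affect the argument.
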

\begin{proof}
Part (1) is essentially the same as \cite[Lemma~2.6]{GT19c}. 

Compare \eqref{e:tauonshiftedcurrents} with Lemma~\ref{L:tauaction}(iii) to see that the map is $\tau$-equivariant, hence it is a well-defined map $\cc_n(\sigma)^\tau \onto (\g^e)^\tau$.

We go on to describe the kernel in (2). First of all observe that the elements listed there are $\tau$-fixed elements of the kernel of $\cc_n(\sigma)\onto \g^e$. Now observe that $\g_0^e$ has a spanning set consisting of elements of the form $c_{a,i,b,j}^{(r)} + \tau(c_{a,i,b,j}^{(r)})$. By Lemma~\ref{L:centraliserlemma} we can show that the elements in (2) generate the kernel by checking that the ideal $\mi$ which they generate contains
\begin{align}
\label{e:inthekernel}
\begin{array}{l}
\{\eta_{a,i,j;r} \mid a=1,...,m, \ , 1\le i,j \le \nu_a \ , \ r \ge \lambda_a\} \\ 
\ \ \ \ \ \ \ \ \ \ \ \ \ \cup \{\theta_{a,i,j;r} \mid a=1,..,m-1,  \ 1\le i\le \nu_a,  \, 1\le j \le \nu_{a+1}, \ r \ge s_{a,a+1} + \lambda_a \}.
\end{array}
\end{align}
Consider the case $\epsilon = 1$ in (2)(i) and note that in this case for each $i$ there is a $j$ such that we have $\eta_{1,i,j;\lambda_1} \neq 0$. This is obvious if $\nu_1 \neq 1$ whereas if $\nu_1=1$ we have that $\lambda_1$ is odd by  $-\varepsilon (-1)^{\lambda_1}, \sigma)$ admissibility and hence $\eta_{1,1,1,\lambda_{1}}=d_{1,1,1,\lambda_{1}}+(-1)^{\lambda_1-1}d_{1,1,1,\lambda_{1}} \neq 0$. Using relation \eqref{currel2} we see that $\mi$ contains $\{\eta_{1,i,j;\lambda_1}, \theta_{1,j,k;r}\} = \theta_{1,i,k; \lambda_1 + r}$ for $r \ge s_{1,2}$ and all admissible $i,k$ whilst \eqref{currel3} together with the fact that $\eta_{1,i,j,\lambda_2+r} \in \mi$ for $r \geq 0$ shows that $\eta_{2,i,j;\lambda_2+r} \in \mi$ for $r\ge 0$ and all admissible $i,j$. By Theorem~\ref{dirredcurrent} the subalgebra of $\cc_n(\sigma)^\tau$ generated by $\eta_{a,i,j;r}, \theta_{a,i,j;r}$ with $2\le a$ is isomorphic to a current Lie algebra associated with a lower rank general linear Lie algebra and a shape of strictly smaller length. The description of the kernel follows by induction.

In the second case we see that if $\nu_1 \neq 1$  for each $i$ there is some $j$ such that $\eta_{1,i,j;\lambda_1}\neq 0$. Hence the ideal $\mi$ contains $\{\eta_{1,i,j;\lambda_1}, \theta_{1,j,k; s}\} = \theta_{1,i,k;\lambda_1+s}$ for $s \ge s_{1,2}$, proving (ii).

If $\nu_1=1$ we see that  $\eta_{1,1,1;\lambda_1+1} \neq 0$. Hence the ideal $\mi$ contains $\{\eta_{1,1,1;\lambda_1+1}, \theta_{1,1,k; s}\} = \theta_{1,1,k;\lambda_1 + 1 + s}$ for $s \ge s_{1,2}$.
Together with the additional generators $\theta_{1,i,j;\lambda_1 + s_{1,2}}$ this gives all elements of the form $\theta_{1,i,j;r}$ listed in \eqref{e:inthekernel}. 

Furthermore if $\nu_1=1$ we have by  $(-\ve(-1)^{\lambda_1}, \sigma)$ admissibility that $\lambda_1$ is even. In particular by \eqref{currel7} we have that $ \{\theta_{1,i,j;s_{1,2}+\lambda_1}, \{\theta_{1,i',k;s_{1,2}}, \theta_{2,k,g;s_{2,3}}\}\} +\{\theta_{1,i,j;s_{1,2}}, \{\theta_{1,i',k;s_{1,2}+\lambda_1}, \theta_{2,k,g;s_{2,3}}\}=\frac{1}{2}(\beta)^{i+j}(-1)^{r+s_{a,a+1}}\theta_{2,j',g;\lambda_2+s_{2,3}}$ as we have $\lambda_2=\lambda_1+2s_{1,2}$. So all additional generators of the form $\theta_{2,i,j;\lambda_2 + s_{2,3}}$ also lie in the ideal. 
Now the inductive proof of (iii) proceeds in the same way as in the first case.
\end{proof}

\subsection{The semiclassical shifted Yangian}

In this section we fix $n > 0$, an $n\times n$ shift matrix $\sigma$ and a $\sigma$-admissible shape $\nu$. The {\em (semiclassical) shifted Yangian with respect to an admissible shape} $^{\nu}y_n(\sigma)$ is the Poisson algebra generated by the set
\begin{equation}
\begin{split}
&\{d_{a;i,j}^{(r)} | {1 \leq a \leq m, 1 \leq i,j \leq \nu_a,
r > 0}\},\\
& \hspace{40pt} \cup \{e_{a;i,j}^{(r)} | {1 \leq a < m, 1 \leq i 
\leq \nu_a, 1 \leq j \leq \nu_{a+1},
r > s_{a,a+1}(\nu)}\},\\
&\hspace{80pt} \{f_{a;i,j}^{(r)} \mid {1 \leq a < m, 1 \leq i 
\leq \nu_{a+1}, 1 \leq j \leq \nu_{a},
r > s_{a+1,a}(\nu)}\} 
\end{split}
\end{equation}
together with relations
\begin{align}\label{yangrel1}
\{d_{a;i,j}^{(r)}, d_{b;h,k}^{(s)}\} &=
\delta_{a,b}
\sum_{t=0}^{\min(r,s)-1}
\left(
d_{a;i,k}^{(r+s-1-t)}d_{a;h,j}^{(t)} 
-d_{a;i,k}^{(t)}d_{a;h,j}^{(r+s-1-t)} \right),
\end{align}
\begin{align}\label{yangrel2}
\{e_{a;i,j}^{(r)}, f_{b;h,k}^{(s)}\}
&=
-\delta_{a,b} \sum_{t=0}^{r+s-1}
\widetilde{d}_{a;i,k}^{(r+s-1-t)}d_{a+1;h,j}^{(t)},
\end{align}
\begin{align}\label{yangrel3}
\{d_{a;i,j}^{(r)}, e_{b;h,k}^{(s)}\} &=
\delta_{a,b} 
\sum_{t=0}^{r-1} 
\sum_{g=1}^{\nu_a}d_{a;i,g}^{(t)} e_{a;g,k}^{(r+s-1-t)}\delta_{h,j}
- \delta_{a,b+1} \sum_{t=0}^{r-1}
d_{b+1;i,k}^{(t)} e_{b;h,j}^{(r+s-1-t)},
\end{align}
\begin{align}\label{yangrel4}
\{d_{a;i,j}^{(r)}, f_{b;h,k}^{(s)}\} &=
\delta_{a,b+1} \sum_{t=0}^{r-1}
 f_{b;i,k}^{(r+s-1-t)}d_{b+1;h,j}^{(t)}-\delta_{a,b} 
\delta_{i,k}\sum_{t=0}^{r-1}\sum_{g=1}^{\nu_a} 
f_{a;h,g}^{(r+s-1-t)}d_{a;g,j}^{(t)},
\end{align}
\begin{align} \label{yangrel5}
\{e_{a;i,j}^{(r)}, e_{a;h,k}^{(s)} \}
&=
\sum_{t=r}^{s-1} e_{a;i,k}^{(t)} e_{a;h,j}^{(r+s-1-t)} \ \ \ \   \text{ if } r<s 
\end{align}
\begin{align}\label{yangrel6}
\{f_{a;i,j}^{(r)}, f_{a;h,k}^{(s)} \}
&=
\sum_{t=s}^{r-1} f_{a;i,k}^{(r+s-1-t)}f_{a;h,j}^{(t)}  \ \ \ \   \text{ if } s<r \end{align}
\begin{align}\label{yangrel7}
\{e_{a;i,j}^{(r)}, e_{a+1;h,k}^{(s+1)}\}
-\{e_{a;i,j}^{(r+1)}, e_{a+1;h,k}^{(s)}\}
&=
-\sum_{g=1}^{\nu_{a+1}}e_{a;i,g}^{(r)} e_{a+1;g,k}^{(s)}\delta_{h,j},
\end{align}
\begin{align}\label{yangrel8}
\{f_{a;i,j}^{(r+1)}, f_{a+1;h,k}^{(s)}\}
-\{f_{a;i,j}^{(r)}, f_{a+1;h,k}^{(s+1)}\}
&=
-\delta_{i,k}\sum_{g=1}^{\nu_{a+1}}f_{a+1;h,g}^{(s)}f_{a;g,j}^{(r)},
\end{align}
\begin{align} \label{yanrel9}
\{e_{a;i,j}^{(r)}, e_{b;h,k}^{(s)}\} &= 0\:\:\qquad\text{if $b>a+1$ 
or if $b = a+1$ and $h \neq j$},
\end{align}
\begin{align}\label{yangrel10}
\{f_{a;i,j}^{(r)}, f_{b;h,k}^{(s)}\}] &= 0\:\:\qquad\text{if $b > a+1$
or if $b=a+1$ and $i \neq k$},
\end{align}
\begin{align} \label{yangrel11}
\Big\{e_{a;i,j}^{(r)}, \{e_{a;h,k}^{(s)}, e_{b;f,g}^{(t)}\}\Big\}
+
\Big\{e_{a;i,j}^{(s)}, \{e_{a;h,k}^{(r)}, e_{b;f,g}^{(t)}\}\Big\} &= 0
\qquad\text{if }|a-b|=1,
\end{align}
\begin{align}\label{yangrel12}
\Big\{f_{a;i,j}^{(r)}, \{f_{a;h,k}^{(s)}, f_{b;f,g}^{(t)}\}\Big\}
+
\Big\{f_{a;i,j}^{(s)}, \{f_{a;h,k}^{(r)}, f_{b;f,g}^{(t)}\}\Big\} &= 0
\qquad\text{if }|a-b|=1,
\end{align}
for all admissible $a,b,f,g,h,i,j,k,r,s,t$.
In these relations we use the notation $d_{a,i,j}^{(0)}=\widetilde{d}_{a,i,j}^{(0)}=\delta_{i,j}$  and define $\widetilde{d}_{a,i,j}^{(r)}$ recursively. 
\begin{eqnarray}
\label{e:dtildedefinition}
\widetilde{d}_{a,i,j}^{(r)} := -\sum_{g=1}^{\nu_a}\sum_{t=1}^r d_{a,i,g}^{(t)} \widetilde d_{a,g,j}^{(r-t)}
\end{eqnarray}

In order to describe the structure of $^{\nu}y_n(\sigma)$ as a commutative algebra we make the notation
\begin{eqnarray}
e_{a,i,a+1,j}^{(r)} := e_{a,i,j}^{(r)} \text{ for } 1\le a < m, \ 1 \leq i \leq \nu_{a}, \ 1 \leq j \leq \nu_{a+1} , \ s(\nu)_{a,a+1} < r ;\\ [3pt]
f_{a+1,i,a,j}^{(r)} := f_{a,i,j}^{(r)} \text{ for } 1\le i <m, \ 1 \leq i \leq \nu_{a+1}, \ 1 \leq j \leq \nu_{a} , \ s(\nu)_{a+1,a} < r ,
\end{eqnarray}
and inductively define
\begin{eqnarray}
\label{e:eijrels}
\begin{array}{l}
  e_{a,i,b,j}^{(r)} := \{e_{a,i,b-1,k}^{(r-s_{b-1,b}+1)}, e_{b-1,k,j}^{(s_{b-1,b}+1)}\} \vspace{6pt}\\ \hspace{80pt} \text{ for } 1\le a < b\le m,  \ 1 \leq i \leq \nu_{a}, \ 1 \leq j \leq \nu_{b} , \ s_{a,b} < r ;
  \end{array}\vspace{6pt}\\
  \begin{array}{l}
\label{e:fijrels}
 f_{a,i,b,j}^{(r)} := \{f_{b-1,i,k}^{(s_{b,b-1} + 1)}, f_{a,k,b-1,j}^{(r-s_{b,b-1})}\}  \vspace{6pt}\\ \hspace{80pt} \text{ for } 1\le a < b\le m,  \ 1 \leq i \leq \nu_{a}, \ 1 \leq j \leq \nu_{b} , \ s_{b,a} < r .
 \end{array}
\end{eqnarray}
For $1 \leq k \leq \nu_{b-1}$ note that by the relations this is independent of the choice of $k$ see \cite[3.16]{BK06}.
The shifted Yangian admits a Poisson grading $^{\nu}y_n(\sigma) = \bigoplus_{r \ge 0} {}^{\nu}y_n(\sigma)_r$, which we call the {\it canonical grading}. It places $d_{a,i,j}^{(r)}, e_{a,i,j}^{(r)}, f_{a,i,j}^{(r)}$ in degree $r$ and the bracket lies in degree $-1$, meaning
$\{\cdot, \cdot\} : \ ^{\nu}y_n(\sigma)_r \times \ ^{\nu}y_n(\sigma)_s \to \ ^{\nu}y_n(\sigma)_{r+s-1}$.
There is also an important Poisson filtration $^{\nu}y_n(\sigma) = \bigcup_{i \ge 0} \F_i \ ^{\nu}y_n(\sigma)$, called the {\it loop filtration}, defined by placing $d_{a,i,j}^{(r)}, e_{a,i,j}^{(r)}, f_{a,i,j}^{(r)}$ in degree $r-1$. With respect to this filtration, the bracket is in degree zero  so that $\F_r y_n(\sigma) \times \F_s y_n(\sigma) \to \F_{r+s}y_n(\sigma)$. The associated graded Poisson algebra is denoted $\gr y_n(\sigma)$.

The following theorem is a semiclassical analogue of \cite[Theorem~2.1]{BK06}, which we ultimately deduce from the non-commutative setting, analogous to \cite[Theorem~3.4]{To23}.
\begin{Theorem}
\label{T:shiftedyangianPBW}
Let $\sigma$ be a symmetric shift matrix and $\nu$ a $\sigma$-admissible shape. There is a Poisson isomorphism $S(\cc_n(\sigma)) \isoto \gr \  ^{\nu}y_n(\sigma)$ defined by
\begin{eqnarray}
\label{e:loopiso}
\begin{array}{rcl}
e_{a,i,j;r-1} \longmapsto e_{a,i,j}^{(r)} + \F_{r-2}y_n(\sigma) & \text{ for } & 1\le i<n, \ s_{i,i+1} < r;\\
f_{a,i,j;r-1} \longmapsto f_{a,i,j}^{(r)} + \F_{r-2}y_n(\sigma)& \text{ for } &1\le i<n, \ s_{i+1,i} < r;\\
d_{a,i,j;r-1} \longmapsto d_{a,i,j}^{(r)} + \F_{r-2}y_n(\sigma) & \text{ for } & 1\le i \le n, \ r<0.
\end{array}
\end{eqnarray}
As a consequence $y_n(\sigma)$ is isomorphic to the polynomial algebra on infinitely many variables
\begin{equation}\label{e:Ygens}
\begin{split}
 &\{d_{a,i,j}^{(r)} \mid 1\leq a \leq m, \ 1 \le i,j \le \nu_{a} \ 0 < r\} \\
 & \hspace{40pt}\cup \{e_{a,i,b,j}^{(r)} \mid 1\leq a < b\le m, \ 1 \le i \le \nu_{a}, \ 1 \le j \le \nu_{b} \ s_{a,b} < r \} \\ 
& \hspace{80pt} \cup \, \{f_{a,i,b,j}^{(r)} \mid 1\leq b < a \le m, \ 1 \le i \le \nu_{a}, \ 1 \le j \le \nu_{b} \ s_{b,a} < r \}.
\end{split}
\end{equation}
\end{Theorem}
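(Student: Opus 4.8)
The plan is to follow the strategy of \cite[Theorem~3.4]{To23}: establish a parabolic PBW theorem for the \emph{non-commutative} shifted Yangian, and then transfer it to the semiclassical level by passing to associated graded algebras. As a preliminary step I would build the candidate map. Using the presentation of $S(\cc_n(\sigma))$ supplied by Theorem~\ref{T:CSshiftedcurrents1}, define $\phi\colon S(\cc_n(\sigma)) \to \gr\, {}^{\nu}y_n(\sigma)$ on Lie generators by the formulas \eqref{e:loopiso}. To see that $\phi$ is a well-defined Poisson homomorphism one checks that the images of $d_{a,i,j;r}, e_{a,i,j;r}, f_{a,i,j;r}$ satisfy the current-algebra relations \eqref{e:shiftedcurrentfreerels1}--\eqref{e:shiftedcurrentfreerels9}: these are exactly the top loop-degree components of \eqref{yangrel1}--\eqref{yangrel12}, since in each of the latter every summand on the right-hand side other than the extremal one (e.g.\ $t=0$ in \eqref{yangrel1}, \eqref{yangrel3}, \eqref{yangrel4}, and similarly in the others) has strictly smaller loop degree and so dies in $\gr$, while the surviving terms reproduce the current-algebra relations. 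Surjectivity of $\phi$ is immediate, since the generators in \eqref{e:loopiso} hit every generator of $\gr\, {}^{\nu}y_n(\sigma)$.

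Next I would carry out the spanning step. Arguing as in \cite[\textsection 6]{BK05} and \cite[\textsection 3]{BK06}, relations \eqref{yangrel3}, \eqref{yangrel4}, \eqref{yangrel7}, \eqref{yangrel8} together with the inductive definitions \eqref{e:eijrels} show that the higher root elements $e_{a,i,b,j}^{(r)}$ and $f_{a,i,b,j}^{(r)}$ are independent of the choice of intermediate index, that they generate ${}^{\nu}y_n(\sigma)$ as a commutative algebra along with the $d_{a,i,j}^{(r)}$, and that relations \eqref{yangrel1}--\eqref{yangrel12} provide straightening rules so that ordered monomials in the elements \eqref{e:Ygens} span ${}^{\nu}y_n(\sigma)$. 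Composing with $\phi$, the algebra $\gr\, {}^{\nu}y_n(\sigma)$ is spanned by the $\phi$-images of ordered monomials in the basis $\{e_{a,i,b,j}t^r\}$ of $\cc_n(\sigma)$, and these images are precisely the loop-leading symbols of the monomials in \eqref{e:Ygens}.

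For linear independence I would pass to the non-commutative shifted Yangian ${}^{\nu}Y_n(\sigma)$ presented by parabolic generators and relations. As in Theorem~\ref{TwistedshiftedPBW} above, and following \cite{BK05, BK06}, one embeds ${}^{\nu}Y_n(\sigma)$ into the parabolic Yangian ${}^{\nu}Y_n$ with respect to the loop filtration and, invoking the PBW theorem of \cite{BK05}, obtains a PBW basis of ${}^{\nu}Y_n(\sigma)$ consisting of ordered monomials in non-commutative lifts of the elements \eqref{e:Ygens}. Equipping ${}^{\nu}Y_n(\sigma)$ with the canonical filtration and passing to the associated graded, relations \eqref{yangrel1}--\eqref{yangrel12} appear as the leading terms of the defining relations, yielding a surjective Poisson homomorphism ${}^{\nu}y_n(\sigma) \onto \gr_C {}^{\nu}Y_n(\sigma)$ that sends the generators \eqref{e:Ygens} to the leading symbols of the PBW generators. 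But $\gr_C {}^{\nu}Y_n(\sigma)$ is the polynomial algebra on those leading symbols, so this surjection carries the spanning set of the previous paragraph onto a basis and is therefore an isomorphism; hence ordered monomials in \eqref{e:Ygens} are linearly independent in ${}^{\nu}y_n(\sigma)$ and ${}^{\nu}y_n(\sigma)$ is the claimed polynomial algebra. Consequently $\gr\, {}^{\nu}y_n(\sigma)$ is the polynomial algebra on the loop-leading symbols of \eqref{e:Ygens}, and since $\phi$ maps the basis $\{e_{a,i,b,j}t^r\}$ of $\cc_n(\sigma)$ bijectively onto these symbols (with the higher $e_{a,i,b,j}t^r$, $f_{a,i,b,j}t^r$ hitting the symbols of the higher root elements via \eqref{e:eijrels}), $\phi$ is an isomorphism.

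The main obstacle is the non-commutative input in the third paragraph: one needs a parabolic presentation of the shifted Yangian ${}^{\nu}Y_n(\sigma)$ for an arbitrary $\sigma$-admissible shape $\nu$, together with the attendant PBW theorem, whereas \cite{BK06} treats only the shape $(1^n)$ and \cite{BK05} only the unshifted Yangian; bridging these requires the filtered-embedding-plus-spanning argument used to prove Theorem~\ref{TwistedshiftedPBW}. A secondary, purely combinatorial nuisance is to verify that the top loop-degree (respectively top canonical-degree) parts of the Serre-type relations \eqref{yangrel7}, \eqref{yangrel8}, \eqref{yangrel11}, \eqref{yangrel12} reduce exactly to \eqref{e:shiftedcurrentfreerels4}--\eqref{e:shiftedcurrentfreerels9}, i.e.\ that no unexpected lower-order terms survive into the leading graded component.
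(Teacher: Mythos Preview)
Your proposal is correct and follows essentially the same route as the paper: build the surjective Poisson map $S(\cc_n(\sigma))\to\gr\,{}^{\nu}y_n(\sigma)$ by comparing top loop-degree parts of the relations, then prove linear independence by constructing the surjection ${}^{\nu}y_n(\sigma)\onto\gr_C{}^{\nu}Y_n(\sigma)$ via the canonical filtration and invoking the non-commutative PBW theorem. The only point to correct is your ``main obstacle'': \cite[\S 3]{BK06} already defines the shifted Yangian ${}^{\nu}Y_n(\sigma)$ for an \emph{arbitrary} $\sigma$-admissible shape $\nu$ (not just $(1^n)$), with parabolic generators and relations (3.3)--(3.14), and \cite[Theorem~3.2(iv)]{BK06} supplies the required PBW basis in $E_{a,i,b,j}^{(r)}, F_{a,i,b,j}^{(r)}, D_{a,i,j}^{(r)}$; so no bridging argument is needed and the paper simply cites this directly.
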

\begin{proof}
Comparing  the top graded components of the relations \eqref{yangrel1}--\eqref{yangrel12} with respect to the loop filtration, with relations \eqref{e:shiftedcurrentfreerels1}--\eqref{e:shiftedcurrentfreerels9}, it is straightforward to see that \eqref{e:loopiso} gives a surjective Poisson homomorphism. To prove the theorem we demonstrate that the ordered monomials in the elements \eqref{e:Ygens} are linearly independent.

Consider the set
\begin{equation}
\begin{array}{rcl}
X &:= & \{E_{a,i,j}^{(r)} \mid 1 \le a < m, \ 1 \le i \le \nu_a, \ 1 \le j \le \nu_{a+1},  \ s_{a,a+1} < r\} \\
& & \hspace{40pt}\cup \{F_{a,i,j}^{(r)} \mid 1 \le a < m, \ 1 \le i \le \nu_{a+1}, \ 1 \le j \le \nu_{a},  \ s_{a+1,a} < r\} \\
& & \hspace{80pt}\cup \{D_{a,i,j}^{(r)} \mid 1 \le a < m, \ 1 \le i,j \le \nu_a,  \ 0 < r\}.
\end{array}
\end{equation}
In  \cite[\textsection 3]{BK06} the shifted Yangian $^{\nu}Y_n(\sigma)$ is defined as a quotient of the free algebra $\C\langle X \rangle$ by the ideal generated by the relations \cite[(3.3)--(3.14)]{BK06}. Let $L := L_X$ be the free Lie algebra on $X$ and define a grading $L = \bigoplus_{i \ge 0} L_i$ by placing $E_{a,i,j}^{(r)}, F_{a,i,j}^{(r)}, D_{a,i,j}^{(r)}$ in degree $r-1$. Then we place a filtration on the enveloping algebra $U(L)$ so that $L_i$ lies in degree $i+1$. By the PBW theorem for $U(L)$ we see that $\gr U(L) \cong S(L)$.
 
The universal property of $U(L)$ ensures that there is a surjective algebra homomorphism $U(L) \onto \C\langle X \rangle$ and by \cite[I, Ch. IV, Theorem 4.2]{Ser06} this is an isomorphism. Identifying these algebras, the filtration on $U(L)$ descends to $^{\nu}Y_n(\sigma) = \bigcup_{i\ge 0} \F'_i \, {}^{\nu}Y_n(\sigma)$, and this resulting filtration is commonly referred to as the canonical filtration \cite[\textsection 5]{BK06}. The associated graded algebra $\gr  {}^{\nu}Y_n(\sigma)$ is equipped with a Poisson structure in the usual manner. Comparing relations \eqref{yangrel1}--\eqref{yangrel12} with the top graded components of relations \cite[(3.3)--(3.14)]{BK06} we see that the Poisson surjection $S(L) \onto \gr {}^{\nu}Y_n(\sigma)$ factors through $S(L) \to {}^{\nu}y_n(\sigma)$. As a result there is a surjective Poisson homomorphism $\pi: {}^{\nu}y_n(\sigma) \onto \gr  {}^{\nu}Y_n(\sigma)$ given by $e_{a,i,j}^{(r)} \mapsto E_{a,i,j}^{(r)} + \F'_{r-1} {}^{\nu}Y_n(\sigma), f_{a,i,j}^{(r)} \mapsto F_{a,i,j}^{(r)} + \F'_{r-1}  {}^{\nu}Y_n(\sigma), d_{a,i,j}^{(r)} \mapsto D_{a,i,j}^{(r)} + \F'_{r-1} {}^{\nu}Y_n(\sigma)$. Following \cite[(3.15), (3.16)]{BK06} we introduce elements $E_{a,i,b,j}^{(r)}, F_{a,i,b,j}^{(r)}$ of $^{\nu}Y_n(\sigma)$ lying in filtered degree $r$. By the definition of the filtration and the elements \eqref{e:eijrels}, \eqref{e:fijrels} we have
 \begin{eqnarray*}
 \label{e:piandthegenerators}
 \begin{array}{rcl}
 \pi(e_{a,i,b,j}^{(r)}) & = & E_{a,i,b,j}^{(r)} + \F'_{r-1} \ ^{\nu}Y_n(\sigma);\vspace{6pt}\\
 \pi(f_{a,i,b,j}^{(r)}) & = & F_{a,i,b,j}^{(r)}  + \F'_{r-1} \ ^{\nu}Y_n(\sigma).\\
 \end{array}
 \end{eqnarray*}
By \cite[Theorem~3.2(iv)]{BK06} the ordered monomials in $E_{a,i,b,j}^{(r)}, F_{a,i,b,j}^{(r)}, D_{a,i,j}^{(r)}$
are linearly independent in $^{\nu}Y_n(\sigma)$ and so we deduce that the images of these monomials in $\gr ^{\nu}Y_n(\sigma)$ are linearly independent. This completes the proof.
\end{proof}
We record two formulas for future use.
\begin{Lemma}\label{tildelemma}
The following hold for $i=1,...,n$, $j = 1,...,n-1$, $r > 0$ and $s > s_{j,j+1}$:
\begin{eqnarray}
\label{e:tildedone}
\{\td_{a,i,j}^{(r)}, e_{b,h,k}^{(s)}\} =\delta_{a,b+1}\delta_{i,k}\sum_{t=0}^{r-1}\sum_{g=1}^{\nu_a} 
e_{b;h,g}^{(r+s-1-t)}\td_{a;g,j}^{(t)}-\delta_{a,b} 
 \sum_{t=0}^{r-1}
 e_{a;i,k}^{(r+s-1-t)}\td_{a;h,j}^{(t)},\\
\label{e:tildedonf}
\{\td_{a,i,j}^{(r)}, f_{b,h,k}^{(s)}\big\} = 
\delta_{a,b} \sum_{t=0}^{r-1}
\td_{a;i,k}^{(t)} f_{a;h,j}^{(r+s-1-t)}
- \delta_{a,b+1} \sum_{t=0}^{r-1} 
\sum_{g=1}^{\nu_a}\td_{a;i,g}^{(t)} f_{b;g,k}^{(r+s-1-t)}\delta_{h,j}.
\end{eqnarray}
\end{Lemma}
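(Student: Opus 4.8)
The plan is to establish \eqref{e:tildedone} by induction on $r$ (uniformly in all the remaining indices), after which \eqref{e:tildedonf} will follow by a verbatim argument. I would first rewrite the defining recursion \eqref{e:dtildedefinition} in the convolution form
\[
\sum_{t=0}^{r}\sum_{g=1}^{\nu_a} d_{a,i,g}^{(t)}\,\td_{a,g,j}^{(r-t)} \;=\; \delta_{r,0}\,\delta_{i,j} \qquad (r\ge 0),
\]
which is equivalent to \eqref{e:dtildedefinition} under the conventions $d_{a,i,j}^{(0)}=\td_{a,i,j}^{(0)}=\delta_{i,j}$. For the base case $r=1$ one has $\td_{a,i,j}^{(1)}=-d_{a,i,j}^{(1)}$, so that \eqref{e:tildedone} with $r=1$ amounts to reading off relation \eqref{yangrel3} for $d_{a,i,j}^{(1)}$ and comparing with the (single-term) right-hand side; this is immediate.

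For the inductive step with $r>1$ I would apply the Poisson derivation $\{\,\cdot\,,e_{b,h,k}^{(s)}\}$ to the convolution identity above. Since $d_{a,i,g}^{(0)}$ and $\td_{a,g,j}^{(0)}$ are scalars, the $t=0$ term of the $d$-factor and the $t=r$ term of the $\td$-factor drop out, and the Leibniz rule rearranges to
\[
\{\td_{a,i,j}^{(r)},e_{b,h,k}^{(s)}\} \;=\; -\sum_{t=1}^{r}\sum_{g=1}^{\nu_a}\{d_{a,i,g}^{(t)},e_{b,h,k}^{(s)}\}\,\td_{a,g,j}^{(r-t)}\;-\;\sum_{t=1}^{r-1}\sum_{g=1}^{\nu_a} d_{a,i,g}^{(t)}\,\{\td_{a,g,j}^{(r-t)},e_{b,h,k}^{(s)}\}.
\]
Into the first double sum I substitute \eqref{yangrel3}, and into the second --- whose bracket only involves $\td^{(r-t)}$ with $r-t<r$ --- the inductive hypothesis. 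The remaining work is purely organisational: one regroups the resulting triple sums, and each time a partial convolution $\sum_{t'}\sum_g d_{a,\cdot,g}^{(t')}\td_{a,g,\cdot}^{(r'-t')}$ reappears it collapses by the identity above to $\delta_{r',0}$ times a Kronecker delta, telescoping the nested sums down to the single sum in \eqref{e:tildedone}. I would treat the cases $a=b$ and $a=b+1$ separately, as \eqref{yangrel3} contributes two terms with different index patterns; in each case the telescoping leaves exactly the single sum in the statement, the sign and (in the $a=b+1$ case) the factor $\delta_{i,k}$ emerging from the matching of indices.

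The proof of \eqref{e:tildedonf} is then word-for-word the same, applying $\{\,\cdot\,,f_{b,h,k}^{(s)}\}$ to the same convolution identity and invoking \eqref{yangrel4} together with the inductive hypothesis; the factors $\td$ and $f$ end up in the opposite order to $\td$ and $e$ in \eqref{e:tildedone} simply because in \eqref{yangrel4} the factor $f$ stands to the left of $d$. I expect the only friction to be the combinatorial bookkeeping of index shifts in these collapsing sums, with no conceptual obstacle. An alternative and arguably cleaner route packages everything in the generating series $D_a(u)=\bigl(\sum_{r\ge0}d_{a,i,j}^{(r)}u^{-r}\bigr)_{i,j}$: the recursion \eqref{e:dtildedefinition} says precisely that the analogous matrix series $\widetilde D_a(u)$ equals $D_a(u)^{-1}$, and then the identity $\{D_a(u)^{-1},x\}=-D_a(u)^{-1}\{D_a(u),x\}D_a(u)^{-1}$ transforms the generating-series versions of \eqref{yangrel3}--\eqref{yangrel4} directly into \eqref{e:tildedone}--\eqref{e:tildedonf}.
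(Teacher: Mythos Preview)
Your proposal is correct and follows essentially the same route as the paper: induction on $r$ using the recursion \eqref{e:dtildedefinition}, with the base case coming from $\td_{a,i,j}^{(1)}=-d_{a,i,j}^{(1)}$ and relation \eqref{yangrel3}, and the inductive step via Leibniz together with \eqref{yangrel3} and the inductive hypothesis, after which the $d\,\td$-convolutions collapse to Kronecker deltas. The paper organises the inductive step by tracking the coefficient of each $e_{b,m,n}^{(s+t)}$ rather than by telescoping the full sum, but this is only a cosmetic difference; your generating-series remark at the end is a genuine alternative that the paper does not pursue.
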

\begin{proof}
We only sketch \eqref{e:tildedone}, as the proof of \eqref{e:tildedonf} is almost identical. The argument is by induction based on \eqref{e:dtildedefinition}. Note that $\td_{a,i,j}^{(1)} = -d_{a,i,j}^{(1)}, \td^{(0)}_{a,i,j}=d^{(0)}_{a,i,j}=\delta_{i,j}$ and so \eqref{e:tildedone} is equivalent to \eqref{yangrel3} in this case. We have $\{\td_{a,i,j}^{(r)}, e_{b,h,k}^{(s)}\} = \{-\sum_{g=1}^{\nu_a}\sum_{t=1}^{r} d_{a,i,g}^{(t)} \td_{a,g,j}^{(r-t)}, e_{b,h,k}^{(s)}\}$ and relation \eqref{yangrel3} together with the inductive hypothesis imply that the coefficient of $e_{b,m,n}^{(s+t)}$ is 
\begin{align*}
& \delta_{a,b+1}\delta_{m,h}(\sum_{w=t+1}^{r-1}\td_{a,n,j}^{(r-w)} d_{a,i,k}^{(w-t-1)}-\td_{a,n,j}^{(w-t-1)} d_{a,i,k}^{(r-w)}+d_{a,i,k}^{(r-t-1)}\td_{a,n,j}^{(0)})  \\
&\hspace{20pt}-(\delta_{a,b}\delta_{n,k}(\sum_{w=t+1}^{r-1}\td_{a,h,j}^{(r-w)} d_{a,i,m}^{(w-t-1)}-\td_{a,h,j}^{(w-t-1)} d_{a,i,m}^{(r-w)}+d_{a,i,m}^{(r-t-1)}\td_{a,h,j}^{(0)})).
\end{align*}
Using $d_{a,i,j}^{(0)} = \td_{a,i,j}^{(0)} = \delta_{i,j}$ we see that all quadratic terms cancel, whilst the linear term equates to $\delta_{a,b+1}\delta_{m,h}\td_{a,n,j}^{(r-t-1)}\delta_{i,k} - \delta_{a,b}\delta_{n,k} \delta_{i,m}\td_{a,h,j}^{(r-t-1)}$, which concludes the induction.
\end{proof}

\subsection{The Dirac reduction of the shifted Yangian}
\label{ss:PDreductionofyangians}
In this section we suppose that $\sigma$ is symmetric and we let $\nu=(\nu_1,\dots, \nu_m)$ be a $(\beta,\sigma)$-admissible shape. Examining the relations \eqref{yangrel1}--\eqref{yangrel12} we see that there is unique involutive Poisson automorphism $\tau$ of $y_n(\sigma)$ determined by
\begin{eqnarray}
\label{e:tauon}
\begin{array}{rcl}
\tau(d_{a,i,j}^{(r)})& := & (\beta)^{i+j}(-1)^{r} d_{a,(\nu)_a + 1 - j,(\nu)_a + 1 - i}^{(r)} = (\beta)^{i+j}(-1)^{r} d_{a,j',i'}^{(r)} \\ [3pt]
\tau(e_{a,i,j}^{(r)}) & := & (\beta)^{i+j}(-1)^{r+ s_{a,a+1}} f_{a,\nu_{a+1}+1-j,\nu_a+1-i}^{(r)}=(\beta)^{i+j}(-1)^{r+ s_{a,a+1}} f_{a,j',i'}^{(r)} \\ [3pt]
\tau(f_{a,i,j}^{(r)}) & := & (\beta)^{i+j}(-1)^{r + s_{a+1, a}} e_{a,\nu_{a+1}+1-j,\nu_{a}+1-i}^{(r)} =  (\beta)^{i+j}(-1)^{r + s_{a+1, a}} e_{a,j',i'}^{(r)} .
\end{array}
\end{eqnarray}
\begin{Remark}
Note that for this automorphism to be involutive we need $\nu$ to be $(\beta,\sigma)$-admissible and that the automorphism $\tau$ depends on the shape.
\end{Remark}
Our present goal is to give a complete description of the Dirac reduction $R(^{\nu}y_n(\sigma), \tau)$. Before doing so we will state the relationship between $R(^{\nu}y_n(\sigma), \tau)$, $R(^{\mu}y_n(\sigma), \tau)$ where both $\mu$ and $\nu$ are $(\beta,\sigma)$-admissble.
\begin{Proposition}\label{equivariant iso}
Let $\nu$, $\mu$ be $(\beta, \sigma)$-admissible shapes. Denote by ${}^{\nu}\tau$, ${}^{\mu}\tau$ the involutions on $^{\nu}y_n(\sigma)$, $^{\mu}y_n(\sigma)$ as defined in \eqref{e:tauon}. Then there exists a $\mathbb{Z}/2$-equivariant isomorphism $$^{\nu}y_n(\sigma) \isoto {}^{\mu}y_n(\sigma)$$ where the $\mathbb{Z}/2$-action on the left is given by ${}^\nu \tau$ and on the right by ${}^\mu \tau$.
\end{Proposition}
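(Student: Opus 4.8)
The plan is to reduce the statement to the existence of a suitable automorphism of the shifted Yangian itself. For a $\sigma$-admissible shape $\rho$, write $Y_n(\sigma)$ for the shifted Yangian presented via a generating matrix $T(u)$; Brundan--Kleshchev's parabolic presentation theorem \cite{BK05, BK06} supplies a canonical filtered algebra isomorphism $\psi_\rho\colon{}^\rho Y_n(\sigma)\isoto Y_n(\sigma)$ under which the Gauss-type generators $D^{(r)}, E^{(r)}, F^{(r)}$ correspond to the block entries of the $\rho$-block LDU factorisation of $T(u)$. It therefore suffices to produce a filtered algebra automorphism $\alpha$ of $Y_n(\sigma)$ with $\alpha\circ\tilde\tau_\nu=\tilde\tau_\mu\circ\alpha$, where $\tilde\tau_\rho:=\psi_\rho\circ{}^\rho\tau\circ\psi_\rho^{-1}$ is the transported involution: then $\Phi:=\psi_\mu^{-1}\circ\alpha\circ\psi_\nu$ is a filtered isomorphism intertwining ${}^\nu\tau$ and ${}^\mu\tau$, and since every map involved is filtered and the canonical filtrations are almost commutative with $\gr_C{}^\rho Y_n(\sigma)\cong{}^\rho y_n(\sigma)$ (cf. the proof of Theorem~\ref{T:shiftedyangianPBW}), the map $\gr_C\Phi$ is the required $\mathbb{Z}/2$-equivariant Poisson isomorphism ${}^\nu y_n(\sigma)\isoto{}^\mu y_n(\sigma)$.

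To construct $\alpha$, I would first rewrite $\tilde\tau_\rho$ in terms of $T(u)$. Feeding the $\rho$-block LDU factorisation $T(u)=F(u)D(u)E(u)$ through the formulas \eqref{e:tauon} — which swap the $E$- and $F$-blocks, reverse the within-block indices $i\mapsto i'$, and introduce signs $\beta^{i+j}(-1)^{r+\bullet}$ — shows that $\tilde\tau_\rho$ is the matrix involution $X\mapsto -K_\rho^{-1}\,\theta(X)\,K_\rho$, where $\theta$ is the automorphism induced by $u\mapsto-u$ and $K_\rho\in\GL_n$ is the block-diagonal matrix whose $\rho$-block $a$ is anti-diagonal with entries $\beta^1,\dots,\beta^{\nu_a}$; the $(\beta,\sigma)$-admissibility of $\rho$ is precisely what makes $K_\rho$ symmetric when $\beta=1$ and anti-symmetric when $\beta=-1$. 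The key combinatorial observation is that any $\sigma$-admissible shape refines the minimal admissible shape, so that $K_\nu$ and $K_\mu$ are both block-diagonal with respect to the coarser minimal shape and restrict, on each of its blocks, to non-degenerate bilinear forms of the same type. Over $\C$ any two such forms are congruent, so there is $g\in\GL_n$, block-diagonal with respect to the minimal admissible shape, with $g^{\top}K_\nu\, g=K_\mu$. Conjugation by such a $g$ preserves $Y_n(\sigma)$ and its canonical filtration — being block-diagonal for the minimal shape, it respects the shift matrix $\sigma$ — and $g^{\top}K_\nu\,g=K_\mu$ yields $\mathrm{Ad}(g)\circ\tilde\tau_\nu=\tilde\tau_\mu\circ\mathrm{Ad}(g)$, so $\alpha:=\mathrm{Ad}(g)$ works.

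The main obstacle is the Gauss-decomposition computation in the second paragraph: confirming that the generator-level formulas \eqref{e:tauon}, with their two distinct index reversals and all the $\beta$- and $(-1)^r$-signs, really do assemble into the matrix involution $X\mapsto-K_\rho^{-1}\theta(X)K_\rho$ under anti-transposition of the LDU factorisation. It is exactly this comparison that forces both $\nu$ and $\mu$ to be $(\beta,\sigma)$-admissible rather than merely $\sigma$-admissible, since otherwise $\tau$ fails to be an involution in the corresponding presentation. A more computational but conceptually lighter alternative is to bypass $T(u)$ entirely: write $\Phi$ explicitly on the Poisson generators using the semiclassical limits of Brundan--Kleshchev's parabolic change-of-generators formulas \cite[\textsection 6]{BK05}, verify directly that it respects the relations \eqref{yangrel1}--\eqref{yangrel12} and intertwines \eqref{e:tauon} for the two shapes on generators, and then invoke the PBW theorem (Theorem~\ref{T:shiftedyangianPBW}) to conclude that $\Phi$ is an isomorphism because it carries a set of polynomial generators of ${}^\nu y_n(\sigma)$ onto one of ${}^\mu y_n(\sigma)$.
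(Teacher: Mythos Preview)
Your strategy coincides with the paper's: lift to the non-commutative shifted Yangians, embed both presentations in the full Yangian $Y_n$ via \cite[Cor.~3.3]{BK06} (the shifted Yangian has no intrinsic $T$-matrix presentation, so one must work inside $Y_n$, where the images of the $i_\rho$ are all equal), realise each transported involution as $T(u)\mapsto J_\rho^{-1}T(-u)^{\top}J_\rho$ for a block-diagonal matrix $J_\rho$, find a block-diagonal congruence $B^{\top}J_\nu B=J_\mu$, conjugate by $B$, and pass to the associated graded. Two small corrections to your sketch: the matrix involution must include the transpose (this is what interchanges the $E$- and $F$-blocks of the Gauss factorisation), and your $K_\rho$ needs an extra scalar $(-1)^{s_{1,a}(\rho)}$ on block $a$ to produce the shift-dependent signs $(-1)^{s_{a,a+1}}$ in \eqref{e:tauon}; compare the paper's matrix $J_\nu$ in \eqref{Jdefn}.
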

\begin{proof}
We will prove the result for  $GL_{\mu_1} \times \dots \times GL_{\mu_m} \subseteq GL_{\nu_1} \times \dots \times GL_{\nu_{m'}}$. This suffices as for $\beta=1$ the shape $(1,1,\dots,1,1)$ is always $(1,\sigma)$-admissible and if $\sigma$ admits a  $\beta=-1$ admissible shape then the shape  $(2,2,\dots,2,2)$ is always $(-1,\sigma)$-admissible. The general result then follows by composing isomorphisms.
Let ${}^{\nu}Y_n(\sigma)$ denote the shifted Yangian of shape $\nu$ as defined in \cite[§3]{BK06}. Also write $Y_n$ for the Yangian for $\gl_n$. Recall the definition of parabolic presentations of $Y_n$ from \cite[§6]{BK05} where the elements $E_{a,,b,i,j}^{(r)},F_{a,b,i,j}^{(r)},D_{a,i,j}^{(r)}$ are defined in terms of a Gauss decomposition of the $T$-matrix or more explicitly by quasi determinants \cite[(6.2)-(6.4)]{BK05}. By \cite[Corollary 3.3]{BK06} the canonical map $i_{\nu}:{}^{\nu}Y_n(\sigma) \to Y_n$ given by sending the generators to the elements with the same name is injective and its image is independent of the choice of $\nu$. Furthermore by \cite[§5]{BK05} the map is filtered with respect to the canonical filtration.
Consider now the map $\tau_{\nu}:Y_n \to Y_n: T(u) \mapsto J_{\nu}^{-1}T(-u)^tJ_{\nu}$ where 
\begin{equation}
\label{Jdefn}
J_{\nu}:=  \sum_{a=1}^{m} \sum_{i=1}^{\nu_a} (\beta)^{i}(-1)^{s_{1,a}(\nu)}  e_{a,i;a,\nu_a + 1 - i} \in GL_{\nu_1} \times \dots \times GL_{\nu_m}
\end{equation}

By \cite[§1.3]{Mo07} this is an automorphism. 
Note that $J_{\nu}$ is a block diagonal matrix that can be described as follows. For each index $a = 1,...,m$ there is a block of size $\nu_a$, each of these blocks has entries $(-1)^{s_{1,a}(\nu)}$ on the antidiagonal  and zeros elsewhere when $\beta=1$. When $\beta=-1$ it has alternating entries  $(-1)^{s_{1,a}(\nu)}(\mp 1)$ on the antidiagonal and zeroes elsewhere. Thus when $\beta = 1$ the matrix $J$ is a block diagonal matrix where each block is symmetric and when $\beta = -1$ each block is anti-symmetric. It follows that 
\begin{eqnarray}
\label{e:Jinverse}
J_{\nu}^{-1} = \beta J_{\nu}.
\end{eqnarray} \\
We claim that
\begin{eqnarray}
\tau_{\nu}(E_{a,a+1,i,j}^{(r)})&= & (\beta)^{i+j}(-1)^{r+s_{a,a+1}(\nu)}F_{a+1,a,j',i'}^{(r)} \\
\tau_{\nu}(F_{a+1,a,i,j}^{(r)})&=&(\beta)^{i+j}(-1)^{r+s_{a,a+1}(\nu)}E_{a+1,a,j',i'}^{(r)} \\
\tau_{\nu}(D_{a,a,i,j}^{(r)})&=&(\beta)^{i+j}(-1)^{r}D_{a,a,j',i'}^{(r)}
\end{eqnarray}
and in particular the image of $i_{\nu}$ is stabilized by $\tau_{\nu}$.
Indeed by \cite[(6.6)-(6.8)]{BK05} we have that for the automorphism 
$\omega:Y_n \to Y_n: T(u) \mapsto T(-u)^\top$
\begin{equation} 
\begin{array}{rcl}
\omega(E_{a,a+1,i,j}^{(r)}) &=& (-1)^{r}F_{a+1,a,j,i}^{(r)} \\
\omega(F_{a+1,a,i,j}^{(r)})&=&(-1)^r E_{a,a+1,j,i}^{(r)} \\
\omega(D_{a,a,i,j}^{(r)})&=&(-1)^r D_{a,a,j,i}^{(r)}
\end{array}
\end{equation}
 Furthermore using the explicit description of $E_{a,a+1,i,j}^{(r)}$, $F_{a+1,a,i,j}^{(r)}$, $D_{a,a,i,j}^{(r)}$ in terms of quasi determinants and \cite[§1.3(i),(ii)]{GGRW04} it  follows immediately that for any matrix $A=(A_{\nu_1}, \dots, A_{\nu_m}) \in GL_{\nu_1} \times \cdots \times GL_{\nu_m}$ and for the automorphism given by $c_A:T(u) \mapsto A^{-1}T(u)A$  we have
\begin{equation} 
\label{conjongens}
\begin{array}{rcl}
c_A(E_{a,a+1}^{(r)}) &= & A_{\nu_a}^{-1}E_{a,a+1}^{(r)}A_{\nu_{a+1}} \\
c_A(F_{a+1,a}^{(r)})&=& A_{\nu_{a+1}}^{-1}F_{a+1,a,}^{(r)}A_{\nu_a} \\
c_A(D_{a,a}^{(r)})&=& A_{\nu_a}^{-1}D_{a,a}^{(r)}A_{\nu_a}
\end{array}
\end{equation}
where $(E_{a,a+1}^{(r)})_{i,j}=E_{a,a+1,i,j}^{(r)}$, $(F_{a+1,a}^{(r)})_{i,j}=F_{a+1,a,i,j}^{(r)}$ $(D_{a,a}^{(r)})_{i,j}=D_{a,a,i,j}^{(r)}$.
 The claim then follows by direct calculation.
 
 From the description of the matrices $J_{\nu}$ and $J_{\mu}$ it follows that there exists $B \in GL_{\nu_1} \times \dots \times GL_{\nu_m}$ such that $J_{\mu}=B^{t}J_{\nu}B$.  It follows from \eqref{conjongens} that the automorphism
$c_{B}:Y_n \to Y_n : T(u) \mapsto B^{-1}T(u)B$ stabilizes the image of $i_{\nu}$ and that it is filtered with respect to the canonical filtration as defined in \cite[§5]{BK06}. It thus follows that $i_{\mu}^{-1} \circ c_{B} \circ i_{\nu}:{}^{\nu}Y_n(\sigma) \to {}^{\mu}Y_n(\sigma) $ is well defined and filtered.

Recall from the proof of Theorem \ref{T:shiftedyangianPBW} that the map  $\pi_{\nu}: {}^{\nu}y_n(\sigma) \onto \gr \ ^{\nu}Y_n(\sigma)$ given by $e_{a,i,j}^{(r)} \mapsto E_{a,i,j}^{(r)} + \F'_{r-1} \ ^{\nu}Y_n(\sigma), f_{a,i,j}^{(r)} \mapsto F_{a,i,j}^{(r)} + \F'_{r-1}  \ ^{\nu}Y_n(\sigma), d_{a,i,j}^{(r)} \mapsto D_{a,i,j}^{(r)} + \F'_{r-1} \  ^{\nu}Y_n(\sigma)$  is an isomorphism of Poisson algebras, where $\F'$ denotes the canonical filtration.  In particular $\pi_{\mu}^{-1} \circ \gr( i_{\mu}^{-1} \circ c_{B} \circ i_{\nu}) \circ \pi_{\nu}$
 is the desired isomorphism.
\end{proof}
\begin{Corollary}\label{Dirrediso}
Let $\nu$, $\mu$ be $(\beta, \sigma)$-admissible shapes.
Then the Dirac reductions $R({}^{\nu}y_n(\sigma), {}^{\nu}\tau)$ and $R({}^{\mu}y_n(\sigma), {}^{\mu}\tau)$ are Poisson isomorphic. $\hfill \qed$
\end{Corollary}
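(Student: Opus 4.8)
The plan is to deduce this directly from Proposition~\ref{equivariant iso}, by observing that Dirac reduction is functorial for equivariant Poisson isomorphisms. First I would recall the general setup from Section~\ref{ss:Diracforfixed}: for a Poisson algebra $A$ with a group $H$ of Poisson automorphisms one has the coinvariant ideal $I_H = (h\cdot a - a \mid h\in H, \ a\in A)$, and $R(A,H) = A^H/I_H^H$ where $I_H^H = I_H \cap A^H$. Now suppose $\phi : A \isoto B$ is an isomorphism of Poisson algebras and that $H$ acts on both $A$ and $B$ by Poisson automorphisms in such a way that $\phi$ is $H$-equivariant. Then $\phi$ restricts to a Poisson isomorphism $A^H \isoto B^H$, and since $\phi(h\cdot a - a) = h\cdot\phi(a) - \phi(a)$ it carries $I_H$ onto the corresponding ideal of $B$, hence carries $I_H^H$ onto $I_H^H$. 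Passing to quotients yields a Poisson isomorphism $R(A,H) \isoto R(B,H)$.

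I would then apply this with $H = \Z/2$, acting on ${}^{\nu}y_n(\sigma)$ via ${}^{\nu}\tau$ and on ${}^{\mu}y_n(\sigma)$ via ${}^{\mu}\tau$, and with $\phi$ the $\Z/2$-equivariant Poisson isomorphism ${}^{\nu}y_n(\sigma)\isoto {}^{\mu}y_n(\sigma)$ supplied by Proposition~\ref{equivariant iso}. The equivariance statement in that Proposition is precisely the hypothesis needed above, so the conclusion $R({}^{\nu}y_n(\sigma),{}^{\nu}\tau) \isoto R({}^{\mu}y_n(\sigma),{}^{\mu}\tau)$ follows at once. In short, this Corollary is a purely formal consequence: $R(-,\Z/2)$ is a functor from the category of Poisson algebras equipped with an involutive Poisson automorphism (with equivariant maps as morphisms) to Poisson algebras, and it therefore sends isomorphisms to isomorphisms.

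There is essentially no obstacle here — all the substance lies in Proposition~\ref{equivariant iso}, which already carries out the construction of the equivariant isomorphism via Gauss decompositions and the automorphisms $\omega$, $c_A$ of the Yangian $Y_n$. The only point requiring the tiniest care is to note that ``equivariant'' in Proposition~\ref{equivariant iso} is understood with respect to the two a priori distinct-looking involutions ${}^{\nu}\tau$ and ${}^{\mu}\tau$ attached to the two shapes in \eqref{e:tauon}; once that identification of the two $\Z/2$-actions under $\phi$ is in hand, the descent to the Dirac reductions is automatic.
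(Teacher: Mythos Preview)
Your proposal is correct and matches the paper's approach: the paper marks the Corollary with a bare $\qed$, treating it as an immediate consequence of Proposition~\ref{equivariant iso}, and your argument just makes explicit the (routine) functoriality of $R(-,\Z/2)$ under equivariant Poisson isomorphisms that justifies this.
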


\begin{Remark}\label{Proof of thm B}
The proof of Theorem B now follows by combining Corollary \ref{Dirrediso} with Theorem \ref{yangianquantizes} which is the special case for $\mu=(1,1,\dots,1,1)$, stating that $y_n^+(\sigma) \cong R({}^{\mu}y_n(\sigma),\tau)$. The parabolic presentations of $y_n^+(\sigma)$ alluded to in Theorem~B are actually the parabolic presentations $R(^{\nu}y_n(\sigma),\tau))$ which are formulated in Theorem~\ref{dirredyangpres}.
\end{Remark}

For all admissible $a,i,j,r$ we write
\begin{eqnarray}
\label{e:ehatandcheck}
\begin{array}{rcl}
\hd_{a,i,j}^{(r)} & := & \frac{1}{2}(d_{a,i,j}^{(r)} + (\beta)^{i+j}(-1)^{r} d_{a,j',i'}^{(r)}) \in  \ ^{\nu}y_n(\sigma); \vspace{4pt}\\
\cd_{a,i,j}^{(r)} & := & \frac{1}{2}(d_{a,i,j}^{(r)} - (\beta)^{i+j}(-1)^{r} d_{a,j',i'}^{(r)}) \in  \ ^{\nu}y_n(\sigma);
\vspace{4pt}\\
\he_{a,i,j}^{(r)} & := & \frac{1}{2}(e_{a,i,j}^{(r)} + (\beta)^{i+j}(-1)^{r + s_{a,a+1}} f_{a,j',i'}^{(r)}) \in \ ^{\nu}y_n(\sigma); \vspace{4pt}\\
\ce_{a,i,j}^{(r)} & := & \frac{1}{2}(e_{a,i,j}^{(r)} - (\beta)^{i+j}(-1)^{r + s_{a,a+1}} f_{a,j',i'}^{(r)}) \in  \ ^{\nu}y_n(\sigma),
\end{array}
\end{eqnarray}
\begin{Remark}\label{scaling}
Compare \eqref{e:ehatandcheck} with \cite[(3.59)]{To23} to see that in the case $\nu=(1,1,\dots,1,1)$ we have $\hd_{i,1,1}^{(r)}=\hd_{i}^{(r)}$ and $\he_{i,1,1}^{(r)}= \frac{1}{2} \he_{i}^{(r)}$.
\end{Remark}
Now  let $\cy_n(\sigma)$ be the ideal of $y_n(\sigma)$ generated by
$$\ce_{a,i,j}^{(r)},\check{d}_{b,h,k}^{(s)}$$ for all admissible $a,i,j,b,h,k,r,s$.
Also write $\cy_n(\sigma)^\tau := \cy_n(\sigma) \cap y_n(\sigma)^\tau$.
By \cite[Lemma~2.2]{To23} we see that $R(^{\nu}y_n(\sigma), \tau) = ^{\nu}y_n(\sigma)^\tau / ^{\nu}\cy_n(\sigma)^\tau$ is Poisson generated by elements
\begin{eqnarray}
\label{e:thetaandeta}
\begin{array}{rcl}
& & \he_{a,i,j}^{(r)} + \cy_n(\sigma)^\tau \text{ for } a=1,...,m-1, \ 1 \leq i \leq \nu_{a}, \ 1 \leq j \leq \nu_{a+1}, \ s_{a,a+1} < r ,\vspace{4pt}\\
& & \hd_{a,i,j}^{(r)} + \cy_n(\sigma)^\tau \text{ for } a=1,...,m, \ 1 \leq i,j \leq \nu_{a}, \ 0 < r ,
\end{array}
\end{eqnarray}
with Poisson brackets induced by the bracket on $y_n(\sigma)$. We will write  $\theta_{a,i,j}^{(r)}$,  $\eta_{a,i,j}^{(r)}$  for $ \he_{a,i,j}^{(r)} + \cy_n(\sigma)^\tau$ and $\hd_{a,i,j}^{(r)} + \cy_n(\sigma)^\tau$ . Furthermore $R(^{\nu}y_n(\sigma), \tau)$ is generated as a commutative algebra by elements\begin{eqnarray}
\label{e:theRgenerators}
\begin{array}{rcl}
& & \theta_{a,i,b,j}^{(r)} \text{ for } 1\leq a < b\leq m, \ 1 \leq i \leq \nu_{a}, \ 1 \leq j \leq \nu_{b}, \ s_{a,a+1} < r ,\vspace{4pt}\\
& & \eta_{a,i,j}^{(r)} \text{ for } a=1,...,m, \ 1 \leq i,j \leq \nu_{a}, \ 0 < r ,
\end{array}
\end{eqnarray}
where $\theta_{a,i,b,j}^{(r)} := e_{a,i,b,j}^{(r)} +(\beta)^{i+j} (-1)^{r+s_{a,b}} f_{a,j',b,i'}^{(r)}$. Using an inductive argument and \eqref{yangrel2} we see the elements $\theta_{a,i,b,j}^{(r)}$ can also be defined via the following recursion for all admissible $a,b,i,j,r$.
\begin{eqnarray}
\label{e:higherthetas}
\begin{array}{rcl}
& & \theta_{a,i,a+1,j}^{(r)}:= \theta_{a,i,j}^{(r)} \\
&& \theta_{a,i,b,j}^{(r)}:= \{ \theta_{a,i,b-1,,k}^{(r-s_{b-1,b})},\theta_{b-1,k,b,j}^{(s_{b-1,b}+1)} \}
\end{array}
\end{eqnarray}
Thanks to Theorem~\ref{T:shiftedyangianPBW} we see that $R(y_n(\sigma), \tau)$ comes equipped with the {\it canonical grading} $R(y_n(\sigma), \tau) = \bigoplus_{i \ge 0} R(y_n(\sigma), \tau)_i$ which places $\eta_{a,i,j}^{(r)}, \theta_{a,i,j}^{(r)}$ in degree $r$ and the Poisson bracket in degree $-1$. Another crucial feature is the {\it loop filtration} $R(y_n(\sigma), \tau) = \bigcup_{i\ge 0} \F_iR(y_n(\sigma), \tau)$ which places  $\eta_{a,i,j}^{(r)}, \theta_{a,i,j}^{(r)}$ in degree $r-1$ and the bracket in degree $0$. Both of these structures are naturally inherited from $y_n(\sigma)$.

The following is our main structural result on the Dirac reduction of the shifted Yangian.
\begin{Theorem}\label{dirredyangpres}
$R(^{\nu}y_n({\sigma}),\tau)$ is Poisson generated by 
\begin{equation}\label{diryanggens}
\begin{split}
&\{\eta_{a;i,j}^{(r)} | {1 \leq a \leq m, 1 \leq i,j \leq \nu_a,
r > 0}\},\\
&\{\theta_{a;i,j}^{(r)} | {1 \leq a < m, 1 \leq i 
\leq \nu_a, 1 \leq j \leq \nu_{a+1},
r > s_{a,a+1}(\nu)}\},
\end{split}
\end{equation}
 with the following Poisson relations\vspace{6pt}
 \begin{equation}\label{diryangrel0}
\begin{split}
\eta_{a;i,j}^{(r)}=(\beta)^{i+j}(-1)^{r}\eta_{a;j',i'}^{(r)}
  \end{split}  
\end{equation}
\begin{equation}\label{diryangrel1}
\begin{split}
 &\hspace{40pt}\{\eta_{a,i,j}^{(r)},\eta_{b,h,k}^{(s)}\} \\
 &=\delta_{a,b} \frac{1}{2}
\sum_{t=0}^{\min(r,s)-1}
\left(
\eta_{a;i,k}^{(r+s-1-t)}\eta_{a;h,j}^{(t)} 
-\eta_{a;i,k}^{(t)}\eta_{a;h,j}^{(r+s-1-t)}+(\beta)^{h+k}(-1)^{s}\left(\eta_{a;i,h'}^{(r+s-1-t)}\eta_{a;k',j}^{(t)} 
-\eta_{a;i,h'}^{(t)}\eta_{a;k',j}^{(r+s-1-t)} \right) \right)
 \end{split}  
\end{equation}
\begin{equation}\label{diryangrel2}
\begin{split}
  \{\eta_{a,i,j}^{(r)}, \theta_{b,h,k}^{(s)}\}&=
   \frac{1}{2} \left( \delta_{h,j} \delta_{a,b} \sum_{t=0}^{r-1} \sum_{g=1}^{\nu_{\alpha}}   \eta_{a,i,g}^{(t)}\theta_{a,g,k}^{(r+s-1-t)}-\delta_{a,b+1} \sum_{t=0}^{r-1}\eta_{b+1,i,k}^{(t)}\theta_{b,h,j}^{(r+s-1-t)}\right)\\
   &+   (\beta)^{i+j}(-1)^{r} \frac{1}{2}\left( \delta_{h,i'} \delta_{a,b} \sum_{t=0}^{r-1} \sum_{g=1}^{\nu_{\alpha}}   \eta_{a,j',g}^{(t)}\theta_{a,g,k}^{(r+s-1-t)}-\delta_{a,b+1} \sum_{t=0}^{r-1}\eta_{b+1,j',k}^{(t)}\theta_{b,h,i'}^{(r+s-1-t)}\right)
 \end{split}  
\end{equation}
\begin{equation}
\begin{split}
 \{\theta_{a,i,j}^{(r)}, \theta_{a,h,k}^{(s)}\} &=
 \frac{1}{2} \left( \sum_{t=r}^{s-1} \theta_{a,i,k}^{(t)} {\theta}_{a,k,j}^{(r+s-1-t)}+(\beta)^{h+k}(-1)^{s_{a,a+1}+s} \sum_{t=0}^{r+s-1}\widetilde{\eta}_{a,i,h'}^{(r+s-1-t)}\eta_{a+1,k',j}^{(t)} \right)\  \textnormal{for} \ r < s \\
 \end{split}  
\end{equation}
\begin{align}
\{\theta_{a,i,j}^{(r)}, \theta_{a+1,h,k}^{(s+1)}\}-\{\theta_{a,i,j}^{(r+1)}, \theta_{a+1,h,k}^{(s)}\}= -\frac{1}{2}\delta_{h,j}\left( \sum_{g=1}^{\nu_{a+1}}\theta^{(r)}_{a,i,g}\theta^{(s)}_{a+1,g,k}\right)
\end{align}
\begin{align}
\{\theta_{a,i,j}^{(r)}, \theta_{b,h,k}^{(s)} \}=0 \textnormal{ for }  b>a+1  \textnormal{ or if } b=a+1 \textnormal{ and } h \neq j
\end{align}
\begin{align}
    \{\theta_{a,i,j}^{(r)}, \{\theta_{a,h,k}^{(s)}, \theta_{b,f,g}^{(t)}\}\} +\{\theta_{a,i,j}^{(s)}, \{\theta_{a,h,k}^{(r)}, \theta_{b,f,g}^{(t)}\}\}=0 \textnormal{ if } \mid a-b \mid =1 \textnormal{ and } r+s \textnormal{ odd}
\end{align}
\vspace{10pt}
\begin{equation}
\label{diryangrel7}
\begin{split}
    &\{\theta_{a,i,j}^{(r)}, \{\theta_{a,h,k}^{(s)}, \theta_{b,f,g}^{(t)}\}\} +\{\theta_{a,i,j}^{(s)}, \{\theta_{a,h,k}^{(r)}, \theta_{b,f,g}^{(t)}\}\}\\
    & \hspace{20pt}=\frac{1}{2}(\beta)^{i+j}(-1)^{r+ s_{a,a+1}+1}\sum_{t'=0}^{r+s-1} \left(\widetilde{\eta}_{a;h,i'}^{(r+s-1-t')}\delta_{a+1,b} \delta_{f,k}
\sum_{t''=0}^{t'-1} 
\sum_{m=1}^{\nu_{a+1}}\eta_{a+1;j',m}^{(t'')} \theta_{a+1;m,g}^{(t'+t-1-t'')} \right)\\
&\hspace{40pt}+\frac{1}{2}(\beta)^{i+j}(-1)^{r + s_{a,a+1}+1}\sum_{t'=0}^{r+s-1} \left(\eta^{(t')}_{a+1,j',k}\delta_{a,b+1}\delta_{h,g}\sum_{t''=0}^{r-t'-1}\sum_{m=1}^{\nu_a} 
\theta_{b;f,m}^{(r-t'+t-1-t'')} \widetilde{\eta}_{a;m,i'}^{(t'')} \right)\\  
&\hspace{60pt}\textnormal{ if } \mid a-b \mid =1 \textnormal{ and } r+s \textnormal{ even}.
\end{split}
\end{equation}
Here we define $\eta_{a,i,j}^{(0)}=\delta_{i,j}$, $\widetilde{\eta}_{a,i,j}^{(0)}=\delta_{i,j}$ and $\widetilde{\eta}_{a,i,j}^{(r)}=-\sum_{g=1}^{\nu_a} \sum_{t=1}^{r} \eta_{a,i,g}^{(t)} \tilde{\eta}_{a,g,j}^{(r-t)}$
\end{Theorem}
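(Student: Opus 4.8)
The plan is to follow the template used in the proof of Theorem~\ref{T:shiftedyangianPBW} and Proposition~\ref{P:someprop}. Write $\widehat{R}$ for the abstract Poisson algebra presented by the generators \eqref{diryanggens} subject to relations \eqref{diryangrel0}--\eqref{diryangrel7}, and define inside $\widehat{R}$ the elements $\widetilde\eta_{a,i,j}^{(r)}$ by the stated recursion and, for $1\le a<b\le m$, the higher elements $\theta_{a,i,b,j}^{(r)}$ by the recursion \eqref{e:higherthetas} (fixing an arbitrary intermediate index). I would first exhibit a Poisson surjection $\pi\colon\widehat{R}\twoheadrightarrow R({}^{\nu}y_n(\sigma),\tau)$, and then prove that $\pi$ is injective by comparing PBW-type spanning sets on the two sides using the loop filtration.

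To construct $\pi$, recall from the discussion preceding the theorem (via \cite[Lemma~2.2]{To23}) that $R({}^{\nu}y_n(\sigma),\tau)={}^{\nu}y_n(\sigma)^{\tau}/{}^{\nu}\cy_n(\sigma)^{\tau}$ is Poisson generated by the classes $\eta_{a,i,j}^{(r)}=\hd_{a,i,j}^{(r)}+{}^{\nu}\cy_n(\sigma)^{\tau}$ and $\theta_{a,i,j}^{(r)}=\he_{a,i,j}^{(r)}+{}^{\nu}\cy_n(\sigma)^{\tau}$; so it suffices to check that \eqref{diryangrel0}--\eqref{diryangrel7} hold among these classes. Relation \eqref{diryangrel0} is immediate from \eqref{e:ehatandcheck} and the $\tau$-invariance of $\hd_{a,i,j}^{(r)}$. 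For the rest one substitutes \eqref{e:ehatandcheck}, expands each Poisson bracket using the defining relations \eqref{yangrel1}--\eqref{yangrel12} and the formulas of Lemma~\ref{tildelemma}, and discards every term lying in ${}^{\nu}\cy_n(\sigma)$, i.e.\ every term with a factor $\ce_{a,i,j}^{(r)}$ or $\cd_{a,i,j}^{(r)}$; the extra summands carrying the signs $(\beta)^{i+j}(-1)^{r}$ are exactly the contribution of the $\tau$-twisted half of each $\hd$, $\he$. Concretely, \eqref{diryangrel1} and \eqref{diryangrel2} come from \eqref{yangrel1}, \eqref{yangrel3}, \eqref{yangrel4}; \eqref{diryangrel3} from \eqref{yangrel5} together with \eqref{yangrel2}, \eqref{e:dtildedefinition} and \eqref{e:tildedone}; and the Chevalley--Serre relations \eqref{diryangrel6}, \eqref{diryangrel7} from \eqref{yanrel9}, \eqref{yangrel7}, \eqref{yangrel11} together with Lemma~\ref{tildelemma}. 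This is the parabolic generalisation of the computation carried out for the shape $(1^{n})$ in \cite[Theorem~3.7]{To23}.

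For injectivity, equip $\widehat{R}$ with the loop filtration, placing each generator $\eta_{a,i,j}^{(r)}$, $\theta_{a,i,j}^{(r)}$ in degree $r-1$, so that the bracket has filtration degree $0$. Repeating the computation of \cite[(5.11)]{LWZ23} (equivalently \eqref{e:haturelations}) verbatim yields the leading symbols in $\gr_L\widehat{R}$ of all brackets among the $\overline{\theta}_{a,i,b,j}^{(r)}$ and $\overline{\eta}_{a,i,j}^{(r)}$, which match \eqref{e:haturelations} twisted by \eqref{e:tauonshiftedcurrents}; hence $\widehat{R}$ is spanned as a commutative algebra by ordered monomials in the $\theta_{a,i,b,j}^{(r)}$ ($1\le a<b\le m$) together with a fixed basis of the linear span of the $\eta_{a,i,j}^{(r)}$ modulo relation \eqref{diryangrel0}. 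On the other hand, Theorem~\ref{T:shiftedyangianPBW} together with the description \eqref{e:tauon} of $\tau$ identifies $R({}^{\nu}y_n(\sigma),\tau)$ with the polynomial algebra on the images under $\pi$ of exactly those monomials --- the verbatim parabolic analogue of the bookkeeping in \cite[Theorem~3.4]{To23}, which may also be phrased as the statement that $\gr_L R({}^{\nu}y_n(\sigma),\tau)\cong S(\cc_n(\sigma)^{\tau})$, presented by \eqref{currel0}--\eqref{currel7} as in Theorem~\ref{dirredcurrent}. Therefore $\pi$ carries a commutative-algebra spanning set onto a commutative-algebra basis, so $\pi$ is an isomorphism.

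The bulk of the work, and the main obstacle, is the relation-checking of the second paragraph: each of \eqref{diryangrel1}--\eqref{diryangrel7} is a finite but intricate manipulation of \eqref{yangrel1}--\eqref{yangrel12}, and because of the $\tau$-twist the right-hand sides are \emph{not} obtained by naive symmetrisation --- one must track the cross-terms and the $\widetilde\eta$-corrections (most delicate in \eqref{diryangrel3} and \eqref{diryangrel7}) and confirm that the discarded terms really do lie in ${}^{\nu}\cy_n(\sigma)$. A secondary point, exactly as in the proof of Theorem~\ref{dirredcurrent}, is that \eqref{diryangrel0} is a linear relation among the chosen generators, so before counting monomials one must first replace the $\eta_{a,i,j}^{(r)}$ by a genuine basis of their span.
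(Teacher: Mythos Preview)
Your proposal is correct and follows essentially the same approach as the paper: define the abstract Poisson algebra $\widehat{R}$, build a surjection $\widehat{R}\twoheadrightarrow R({}^{\nu}y_n(\sigma),\tau)$ by verifying each relation \eqref{diryangrel0}--\eqref{diryangrel7} via explicit computation with \eqref{yangrel1}--\eqref{yangrel12} and Lemma~\ref{tildelemma}, and then establish injectivity through the loop filtration by comparing with the twisted current algebra presentation of Theorem~\ref{dirredcurrent}. The paper carries out the relation-checking in full detail (including the preliminary identity $\widetilde{\eta}_{a,i,j}^{(r)}=\tfrac{1}{2}(\tilde d_{a,i,j}^{(r)}+\tau(\tilde d_{a,i,j}^{(r)}))$ and the reduction rule $xy+\tau(xy)=\tfrac{1}{2}(x+\tau(x))(y+\tau(y))$ in the quotient), and for the spanning argument invokes Theorem~\ref{dirredcurrent} directly rather than the \cite{LWZ23} computation, but the logic is identical to what you outline.
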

\begin{proof}
In this proof we will frequently use the following two basic facts: $(\beta)^{i+j}=(\beta)^{i'+j'}$, which follows from $(\beta, \sigma)$-admissibility, and  $xy+\tau(xy)=\frac{1}{2}(x+\tau(x))(y+\tau(y))$ for any $x,y \in R(y_n(\sigma),\tau)$. 

First note that $\widetilde{\eta}_{a,i,j}^{(r)}=\frac{1}{2}(\tilde{d}_{a,i,j}^{(r)}+\tau(\tilde{d}_{a,i,j}^{(r)}))$. The proof is by induction on $r$ 
\begin{equation}
\begin{split}
\widetilde{\eta}_{a,i,j}^{(r)}&=-\sum_{g=1}^{\nu_a} \sum_{t=1}^{r} \eta_{a,i,g}^{(t)} \tilde{\eta}_{a,g,j}^{(r-t)}\\
&=-\frac{1}{4}\sum_{g=1}^{\nu_a} \sum_{t=1}^{r} (d_{a,i,g}^{(t)}+\tau(d_{a,i,g}^{(t)})) (\tilde{d}_{a,g,j}^{(r-t)}+\tau(\tilde{d}_{a,g,j}^{(r-t)}))\\
&=-\frac{1}{2} \sum_{g=1}^{\nu_a} \sum_{t=1}^{r} d_{a,i,g}^{(t)}\tilde{d}_{a,g,j}^{(r-t)}+\tau(d_{a,i,g}^{(t)})\tau(\tilde{d}_{a,g,j}^{(r-t)})
\end{split}
\end{equation}
Relation \eqref{diryangrel0} follows immediately.
For relation \eqref{diryangrel1} we have
   \begin{align*}
 \{\eta_{a,i,j}^{(r)},\eta_{b,h,k}^{(s)}\}&=\frac{1}{4} \left(\{d_{a,i,j}^{(r)},d_{b,h,k}^{(s)}\}+ (\beta)^{i+j+h+k}(-1)^{r+s}\{d_{a,j',i'}^{(r)},d_{b,k',h'}^{(s)}\} \right)\\
 &+\frac{1}{4}\left(((\beta)^{i+j}-1)^{r}\{d_{a,j',i'}^{(r)},d_{b,h,k}^{(s)}\}+(\beta)^{h+k}(-1)^{s}\{\eta_{a,i,j}^{(r)},\eta_{b,k',h'}^{(s)}\}\right) \\
 &= \frac{1}{4}\delta_{a,b}
 \sum_{t=0}^{\min(r,s)-1}
\left(
d_{a;i,k}^{(r+s-1-t)}d_{a;h,j}^{(t)} 
-d_{a;i,k}^{(t)}d_{a;h,j}^{(r+s-1-t)} \right)\\
&+\frac{1}{4} \delta_{a,b}(\beta)^{i+j+h+k}(-1)^{r+s}\sum_{t=0}^{\min(r,s)-1} \left(
d_{a;j',h'}^{(r+s-1-t)}d_{a;k',i'}^{(t)} 
-d_{a;j',h'}^{(t)}d_{a;k',i'}^{(r+s-1-t)}\right) \\
&+\frac{1}{4}\delta_{a,b}(\beta)^{i+j}(-1)^{r}\sum_{t=0}^{\min(r,s)-1} \left(
d_{a;j',k}^{(r+s-1-t)}d_{a;h,i'}^{(t)} 
-d_{a;j',k}^{(t)}d_{a;h,i'}^{(r+s-1-t)}\right) \\
&+\frac{1}{4} \delta_{a,b}(\beta)^{h+k}(-1)^{s}\sum_{t=0}^{\min(r,s)-1} \left(
d_{a;i,h'}^{(r+s-1-t)}d_{a;k',j}^{(t)} 
-d_{a;i,h'}^{(t)}d_{a;k',j}^{(r+s-1-t)}\right)\\
&= \frac{1}{2}\delta_{a,b}
\sum_{t=0}^{\min(r,s)-1}
\left(
\eta_{a;i,k}^{(r+s-1-t)}\eta_{a;h,j}^{(t)} 
-\eta_{a;i,k}^{(t)}\eta_{a;h,j}^{(r+s-1-t)} \right) \\
&+\frac{1}{2}\delta_{a,b}(\beta)^{h+k}(-1)^{s}\sum_{t=0}^{\min(r,s)-1}\left(\eta_{a;i,h'}^{(r+s-1-t)}\eta_{a;k',j}^{(t)} 
-\eta_{a;i,h'}^{(t)}\eta_{a;k',j}^{(r+s-1-t)} \right)
 \end{align*}
 Using relation \eqref{yangrel1}.
 For relation \eqref{diryangrel2} we have
 \begin{align*}
\{\eta_{a,i,j}^{(r)}, \theta_{b,h,k}^{(s)}\}&=\frac{1}{4}\left(\{d_{a,i,j}^{(r)},e_{b,h,k}^{(s)}\}+(\beta)^{i+j+h+k}(-1)^{r+s+s_{b,b+1}} \{d_{a,j',i'}^{(r)}, f_{b,k',h'}^{(s)}\}\right) \\
&+\frac{1}{4}\left((\beta)^{i+j}(-1)^{r} \{d_{a,j',i'}^{(r)}, e_{b,h,k}^{(s)}\}+(\beta)^{h+k}(-1)^{s+s_{b,b+1}}
\{d_{a,i,j}^{(r)}, f_{b,k',h'}^{(s)}\}\right)\\
&= \frac{1}{4} \left( \delta_{a,b} 
\sum_{t=0}^{r-1} 
\sum_{g=1}^{\nu_a}d_{a;i,g}^{(t)} e_{a;g,k}^{(r+s-1-t)}\delta_{h,j}
- \delta_{a,b+1} \sum_{t=0}^{r-1}
d_{b+1;i,k}^{(t)} e_{b;h,j}^{(r+s-1-t)} \right) \\
&+\frac{1}{4}(\beta)^{i+j+h+k}(-1)^{r+s+s_{b,b+1}}\left( \delta_{a,b+1} \sum_{t=0}^{r-1}
d_{b+1;k',i'}^{(t)} f_{b;j',h'}^{(r+s-1-t)}- \delta_{a,b} 
\sum_{t=0}^{r-1} 
\sum_{g=1}^{\nu_a}d_{a;g,i'}^{(t)} f_{a;k',g}^{(r+s-1-t)}\delta_{h',j'}
\right)\\
&+ \frac{1}{4}(\beta)^{i+j}(-1)^{r}\left( \delta_{a,b} 
\sum_{t=0}^{r-1} 
\sum_{g=1}^{\nu_a}d_{a;j',g}^{(t)} e_{a;g,k}^{(r+s-1-t)}\delta_{h,i'}
- \delta_{a,b+1} \sum_{t=0}^{r-1}
d_{b+1;j',k}^{(t)} e_{b;h,i'}^{(r+s-1-t)}\right)\\
&+\frac{1}{4}(\beta)^{k+h}(-1)^{s+s_{b,b+1}}\left( \delta_{a,b+1} \sum_{t=0}^{r-1}
d_{b+1;k',j}^{(t)} f_{b;i,h'}^{(r+s-1-t)}- \delta_{a,b} 
\sum_{t=0}^{r-1} 
\sum_{g=1}^{\nu_a}d_{a;g,j}^{(t)} f_{a;k',g}^{(r+s-1-t)}\delta_{h',i}
\right)\\
&=
   \frac{1}{2} \left( \delta_{h,j} \delta_{a,b} \sum_{t=0}^{r-1} \sum_{g=1}^{\nu_{\alpha}}   \eta_{a,i,g}^{(t)}\theta_{a,g,k}^{(r+s-1-t)}-\delta_{a,b+1} \sum_{t=0}^{r-1}\eta_{b+1,i,k}^{(t)}\theta_{b,h,j}^{(r+s-1-t)}\right)\\
   &+   \frac{1}{2}(\beta)^{i+j}(-1)^{r} \left( \delta_{h,i'} \delta_{a,b} \sum_{t=0}^{r-1} \sum_{g=1}^{\nu_{\alpha}}   \eta_{a,j',g}^{(t)}\theta_{a,g,k}^{(r+s-1-t)}-\delta_{a,b+1} \sum_{t=0}^{r-1}\eta_{b+1,j',k}^{(t)}\theta_{b,h,i'}^{(r+s-1-t)}\right)
\end{align*}
Using relations \eqref{yangrel3} and \eqref{yangrel4}.
For $r<s$ we have
\begin{align*}
\{\theta_{a,i,j}^{(r)}, \theta_{a,h,k}^{(s)}\}&= \frac{1}{4}\left( \{e_{a,i,j}^{(r)}, e_{a,h,k}^{(s)}\}- (\beta)^{i+j+h+k}(-1)^{r+s}\{f_{a,k',h'}^{(s)} f_{a,j',i'}^{(r)}\} \right)\\
&+ \frac{1}{4}\left((\beta)^{h+k}(-1)^{s+s_{a,a+1}}\{e_{a,i,j}^{(r)}, f_{a,k',h'}^{(s)}\}- (\beta)^{i+j}(-1)^{r+s_{a,a+1}}\{e_{a,h,k}^{(s)} f_{a,j',i'}^{(r)}\}\right)\\
&=\frac{1}{4}\sum_{t=r}^{s-1} e_{a;i,k}^{(t)} e_{a;h,j}^{(r+s-1-t)} \\
&-\frac{1}{4} (\beta)^{i+j+k+h}(-1)^{r+s}\sum_{t=r}^{s-1} f_{a;k',i'}^{(t)} f_{a;h',j'}^{(r+s-1-t)} \\
&+\frac{1}{4}(\beta)^{h+k}(-1)^{s+s_{a,a+1}} \sum_{t=0}^{r+s-1}
\widetilde{d}_{a;i,h'}^{(r+s-1-t)}d_{a+1;k',j}^{(t)} \\
&-\frac{1}{4}(\beta)^{i+j}(-1)^{r+s_{a,a+1}}\sum_{t=0}^{r+s-1}
\widetilde{d}_{a;h,i'}^{(r+s-1-t)}d_{a+1;j',k}^{(t)} \\
&=\frac{1}{2} \left( \sum_{t=r}^{s-1} \theta_{a,i,k}^{(t)} {\theta}_{a,k,j}^{(r+s-1-t)}+(\beta)^{h+k}(-1)^{s_a+s} \sum_{t=0}^{r+s-1}\widetilde{\eta}_{a,i,h'}^{(r+s-1-t)}\eta_{a+1,k',j}^{(t)} \right)\
\end{align*}
Where we used relations \eqref{yangrel2}, \eqref{yangrel5} and \eqref {yangrel6}. The next relation follows from using relations \eqref{yangrel2}, \eqref{yangrel7} and \eqref{yangrel8}.
\begin{align*}
\{\theta_{a,i,j}^{(r)}, \theta_{a+1,h,k}^{(s+1)}\}-\{\theta_{a,i,j}^{(r+1)}, \theta_{a+1,h,k}^{(s)}\}&=\frac{1}{4}\left(\{e_{a,i,j}^{(r)}, e_{a+1,h,k}^{(s+1)}\}-\{e_{a,i,j}^{(r+1)}, e_{a+1,h,k}^{(s)}\}\right)\\
&+\frac{1}{4}(\beta)^{i+j+k+h}(-1)^{s_{a,a+2}+r+s} \{f_{a,j',i'}^{(r+1)}, f_{a+1,k',h'}^{(s)}\}\\
&-\frac{1}{4}(\beta)^{i+j+k+h}(-1)^{s_{a,a+2}r+s}\{f_{a,j',i'}^{(r)}, f_{a+1,k',h'}^{(s+1)}\} \\
&=-\frac{1}{4}\delta_{h,j}\sum_{g=1}^{\nu_{a+1}}e_{a;i,g}^{(r)} e_{a+1;g,k}^{(s)}\\
&-\frac{1}{4}(\beta)^{i+j+h+k}(-1)^{s_{a,a+2}+r+s}\delta_{h',j'}\sum_{g=1}^{\nu_{a+1}}f_{a+1;k',g}^{(s)}f_{a;g,i'}^{(r)} \\
&=-\frac{1}{2}\delta_{h,j}\left( \sum_{g=1}^{\nu_{a+1}}\theta^{(r)}_{a,i,g}\theta^{(s)}_{a+1,g,k}\right)
\end{align*}
Finally the last relation follows from using relations \eqref{yangrel2}, \eqref{yangrel11}, \eqref{yangrel12}, \eqref{yangrel3}, \eqref{yangrel4} and Lemma~\ref{tildelemma}.
\begin{align*}
    &\{\theta_{a,i,j}^{(r)}, \{\theta_{a,h,k}^{(s)}, \theta_{b,f,g}^{(t)}\}\} +\{\theta_{a,i,j}^{(s)}, \{\theta_{a,h,k}^{(r)}, \theta_{b,f,g}^{(t)}\}\} \\
    &= \frac{1}{8}(\beta)^{h+k+f+g}(-1)^{s_{a,a+1}+s_{b,b+1}+s+t}\{e_{a,i,j}^{(r)}, \{f_{a,k',h'}^{(s)}, f_{b,g',f'}^{(t)}\}\}\\
    &+\frac{1}{8}(\beta)^{h+k+f+g}(-1)^{s_{a,a+1}+s_{b,b+1}+r+t}\{e_{a,i,j}^{(s)}, \{f_{a,k',h'}^{(r)}, f_{b,g',f'}^{(t)}\}\} \\
    &+\frac{1}{8} (\beta)^{i+j}(-1)^{s_{a,a+1}+r}\{f_{a,j',i'}^{(r)}, \{e_{a,h,k}^{(s)}, e_{b,f,g}^{(t)}\}\}\\
    &+\frac{1}{8} (\beta)^{i+j}(-1)^{s_{a,a+1}+s}\{f_{a,j',i'}^{(s)}, \{e_{a,h,k}^{(r)}, e_{b,f,g}^{(t)}\}\}
\end{align*}
\begin{align*}
    &=\frac{1}{8}(\beta)^{h+k+f+g}(-1)^{s+t + s_{a,a+1}+ s_{b,b+1}}(1 + (-1)^{r+s}) \{\{e_{a,i,j}^{(r)}, f_{a,k',h'}^{(s)}\}, f_{b,g',f'}^{(t)}\} \\
    &-\frac{1}{8} (\beta)^{i+j}(-1)^{r+ s_{a,a+1}}(1 + (-1)^{r+s}) \{\{e_{a,h,k}^{(r)}, f_{a,j',i'}^{(s)}\}, e_{b,f,g}^{(t)}\} \\
    &=\frac{1}{8}(\beta)^{h+k+f+g}(-1)^{s+t + s_{a,a+1}+ s_{b,b+1}}(1 + (-1)^{r+s})\left\{ \sum_{t'=0}^{r+s-1} \widetilde{d}_{a;i,h'}^{(r+s-1-t')}d_{a+1;k',j}^{(t')},f_{b,g',f'}^{(t)}\right\}\\
    &-\frac{1}{8}(\beta)^{i+j}(-1)^{r + s_{a,a+1}}(1 + (-1)^{r+s}) \left \{\sum_{t'=0}^{r+s-1}
\widetilde{d}_{a;h,i'}^{(r+s-1-t')}d_{a+1;j',k}^{(t')},e_{b,f,g}^{(t)}\right \}
\end{align*}
If $r+s$ is odd then this vanishes whereas if $r+s$ is even this simplifies too
\begin{align*}
&-\frac{1}{4}(\beta)^{h+k+f+g}(-1)^{s+t + s_{a,a+1} + s_{b,b+1}}\sum_{t'=0}^{r+s-1} \left( \widetilde{d}_{a;i,h'}^{(r+s-1-t')}\delta_{a+1,b} 
\delta_{k',f'}\sum_{t''=0}^{t'-1}\sum_{m=1}^{\nu_{a+1}} 
f_{a+1;g',m}^{(t'+t-1-t'')}d_{a+1;m,j}^{(t'')} \right) \\
&-\frac{1}{4}(\beta)^{h+k+f+g}(-1)^{s+t + s_{a,a+1} + s_{b,b+1}}\sum_{t'=0}^{r+s-1} \left(d_{a+1;k',j}^{(t')} \delta_{a,b+1} \delta_{g,h}\sum_{t''=0}^{r-t'-1} 
\sum_{m=1}^{\nu_a}\widetilde{d}_{a;i,m}^{(t'')} f_{b;m,f'}^{(r-t'+t-1-t'')} \right) \\
&-\frac{1}{4}(\beta)^{i+j}(-1)^{r + s_{a,a+1}}\sum_{t'=0}^{r+s-1} \left(\widetilde{d}_{a;h,i'}^{(r+s-1-t')}\delta_{a+1,b} \delta_{f,k}
\sum_{t''=0}^{t'-1} 
\sum_{m=1}^{\nu_{a+1}}d_{a+1;j',m}^{(t'')} e_{a+1;m,g}^{(t'+t-1-t'')} \right)\\
&-\frac{1}{4}(\beta)^{i+j}(-1)^{r+ s_{a,a+1}}\sum_{t'=0}^{r+s-1} \left(d^{(t')}_{a+1,j',k}\delta_{a,b+1}\delta_{h,g}\sum_{t''=0}^{r-t'-1}\sum_{m=1}^{\nu_a} 
e_{b;f,m}^{(r-t'+t-1-t'')} \widetilde{d}_{a;m,i'}^{(t'')} \right) \\
&=\frac{1}{2}(\beta)^{i+j}(-1)^{r+ s_{a,a+1}+1}\sum_{t'=0}^{r+s-1} \left(\widetilde{\eta}_{a;h,i'}^{(r+s-1-t')}\delta_{a+1,b} \delta_{f,k}
\sum_{t''=0}^{t'-1} 
\sum_{m=1}^{\nu_{a+1}}\eta_{a+1;j',m}^{(t'')} \theta_{a+1;m,g}^{(t'+t-1-t'')} \right)\\
&+\frac{1}{2}(\beta)^{i+j}(-1)^{r+ s_{a,a+1}+1}\sum_{t'=0}^{r+s-1} \left(\eta^{(t')}_{a+1,j',k}\delta_{a,b+1}\delta_{h,g}\sum_{t''=0}^{r-t'-1}\sum_{m=1}^{\nu_a} 
\theta_{b;f,m}^{(r-t'+t-1-t'')} \widetilde{\eta}_{a;m,i'}^{(t'')} \right)
\end{align*}
Let $\widehat{R}(^{\nu}y_n(\sigma), \tau)$ denote the poisson algebra with generators \eqref{diryanggens} and relations \eqref{diryangrel0}-\eqref{diryangrel7}.  We have shown that there is a surjection $\widehat{R}(^{\nu}y_n(\sigma), \tau) \twoheadrightarrow {R}(^{\nu}y_n(\sigma), \tau) $. To conclude the proof we show that it sends a spanning set to a basis. We define a loop filtration on $\widehat{R}(^{\nu}y_n(\sigma), \tau)=\bigcup_{i \geq 0}F_{i}\widehat{R}(^{\nu}y_n(\sigma), \tau)$ by placing the generators $\theta_{a,i,j}^{(r)}, \eta_{a,i,j}^{(r)}$ in degree $r-1$ and the bracket in degree 0.

Examining the top filtered components of \eqref{diryangrel1}-\eqref{diryangrel7} we see  from Theorem \ref{dirredcurrent} that there is a surjective Poisson morphism $R(S(\cc_n(\sigma), \tau) \twoheadrightarrow \gr \widehat{R}(^{\nu}y_n(\sigma), \tau) $. We thus see that $\gr \widehat{R}(^{\nu}y_n(\sigma), \tau)$ is generated as a commutative algebra by the elements
$\eta_{a,i,j}^{(r)}+F_{r-2}\widehat{R}(^{\nu}y_n(\sigma), \tau),\theta_{a,i,b,j}^{(r)}+F_{r-2}\widehat{R}(^{\nu}y_n(\sigma), \tau)$ which are again defined via the recursion \eqref{e:higherthetas}.

By a standard filtered algebra argument $ \widehat{R}(^{\nu}y_n(\sigma), \tau)$ is generated as a commutative algebra by the elements with the same names in $\gr \widehat{R}(^{\nu}y_n(\sigma), \tau)$.
 We have thus shown that $\widehat{R}(^{\nu}y_n(\sigma), \tau) \twoheadrightarrow {R}(^{\nu}y_n(\sigma), \tau) $ maps a spanning set to a basis, which concludes the proof. 
\end{proof}
\begin{Remark}
Strictly speaking we need to take care of the linear relations \eqref{diryangrel0} between the $\eta_{a,i,j}^{(r)}$ to conclude that a spanning set gets sent to a basis.
This can be dealt with in exactly the same way as the proof of Theorem \ref{T:twistedcurrentalgpresenrtation}.
\end{Remark}
\begin{Remark}
Note that \cite[Theorem~3.7]{To23} is a special case of this presentation where $\nu=(1,1,\dots,1,1)$ the only difference being in scaling  due to a different convention of the scaling of generators as explained in Remark \ref{scaling}.
\end{Remark}

Let $\gr R(y_n(\sigma), \tau)$ denote the graded algebra for the loop filtration.
In the last paragraph of the proof of Theorem~\ref{dirredyangpres} we obtained the following important result.
\begin{Corollary}
\label{C:loopforcurrent}
If $\sigma$ is a symmetric shift matrix and $^{\nu}y_n(\sigma)$ is the semiclassical shifted Yangian then there is an isomorphism of commutative algebras $S(^{\nu}\cc_n(\sigma)^\tau) \isoto \gr R(^{\nu}y_n(\sigma), \tau)$ given by
\begin{eqnarray}
\label{e:PDloopiso}
\begin{array}{lcl}
\theta_{a,i,j;r-1} \longmapsto \theta_{a,i,j}^{(r)} + \F_{r-2}y_n(\sigma) & \text{ for  all admissible } a,i,j, \ s_{a,a+1}(\nu)<r \\
\eta_{a,i,j;r-1} \longmapsto \eta_{a,i,j}^{(r)} + \F_{r-2}y_n(\sigma) & \text{ for  all admissible } a,i,j, \, 0 < r.\vspace{-18pt}
\end{array}
\end{eqnarray}
$\hfill \qed$
\end{Corollary}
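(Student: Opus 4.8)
The plan is to recognise that the statement is essentially a by-product of the proof of Theorem~\ref{dirredyangpres}, and to assemble the relevant pieces. First I would recall from that proof the abstract Poisson algebra $\widehat{R}({}^{\nu}y_n(\sigma), \tau)$ defined by the generators \eqref{diryanggens} and the relations \eqref{diryangrel0}--\eqref{diryangrel7}, together with its loop filtration placing $\theta_{a,i,j}^{(r)}$ and $\eta_{a,i,j}^{(r)}$ in degree $r-1$ and the bracket in degree $0$, and the canonical Poisson surjection $\widehat{R}({}^{\nu}y_n(\sigma), \tau) \onto R({}^{\nu}y_n(\sigma), \tau)$ which was shown there to be an isomorphism. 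Comparing the top filtered components of \eqref{diryangrel0}--\eqref{diryangrel7} with the Poisson presentation \eqref{currel0}--\eqref{currel7} of $S({}^{\nu}\cc_n(\sigma)^{\tau})$ supplied by Theorem~\ref{dirredcurrent} produces a surjective Poisson homomorphism $\psi : S({}^{\nu}\cc_n(\sigma)^{\tau}) \onto \gr \widehat{R}({}^{\nu}y_n(\sigma), \tau) \cong \gr R({}^{\nu}y_n(\sigma), \tau)$ sending $\eta_{a,i,j;r-1}$ and $\theta_{a,i,j;r-1}$ to the symbols of $\eta_{a,i,j}^{(r)}$ and $\theta_{a,i,j}^{(r)}$. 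Unwinding the recursions \eqref{e:someotherlabel} and \eqref{e:higherthetas}, one checks that $\psi$ is precisely the assignment \eqref{e:PDloopiso}, so only the injectivity of $\psi$ remains.

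For injectivity I would compare free generating sets on the two sides. On the source side, $S({}^{\nu}\cc_n(\sigma)^{\tau})$ is the polynomial ring on any vector-space basis of ${}^{\nu}\cc_n(\sigma)^{\tau}$, and by Theorem~\ref{dirredcurrent} (cf.\ the spanning set \eqref{e:shiftedcurrentinvariantsfreegensproof}) such a basis is given by the higher elements $\theta_{a,i,b,j;r}$ with $a<b$, together with a subset $\{v_1, v_2, \dots\}$ of the $\eta_{a,i,j;r}$ realising a basis of their linear span modulo \eqref{currel0}. On the target side, the last paragraph of the proof of Theorem~\ref{dirredyangpres} shows that the ordered monomials in the symbols of the higher elements $\theta_{a,i,b,j}^{(r)}$ with $a<b$ and of a corresponding basis of the $\eta$-span modulo \eqref{diryangrel0} form a basis of $\gr R({}^{\nu}y_n(\sigma), \tau)$; this is exactly where Theorem~\ref{T:shiftedyangianPBW} and the identification of the commutative generators of $R$ around \eqref{e:theRgenerators} enter. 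Hence $\psi$ is a surjection of polynomial rings taking a free generating set bijectively to a free generating set, and is therefore an isomorphism.

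The one point to handle carefully — just as in the proofs of Theorem~\ref{T:twistedcurrentalgpresenrtation} and Theorem~\ref{dirredyangpres} — is the presence of the linear relations \eqref{currel0} and \eqref{diryangrel0} among the $\eta$'s: before comparing free generating sets one must first replace the spanning set $\{\eta_{a,i,j;r}\}$ by a genuine basis $\{v_i\}$ of its linear span on each side, so that the matching is really between free generating sets and the counting is not thrown off. I do not expect any further obstacle; the corollary is a direct reading-off of the PBW-type statements already in hand, and involves no new computation.
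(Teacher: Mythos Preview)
Your proposal is correct and follows essentially the same route as the paper: the paper simply records that the corollary was obtained in the last paragraph of the proof of Theorem~\ref{dirredyangpres}, and your write-up unpacks precisely that paragraph --- the surjection $S({}^{\nu}\cc_n(\sigma)^{\tau}) \onto \gr \widehat{R}({}^{\nu}y_n(\sigma),\tau) \cong \gr R({}^{\nu}y_n(\sigma),\tau)$ from comparing top filtered components, together with the PBW/spanning-set-to-basis argument (including the care with the linear relation \eqref{diryangrel0}) that forces injectivity. There is nothing further to add.
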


\section{The Dirac reduced semiclassical Brundan--Kleshchev homomorphism}

\subsection{The semiclassical Brundan--Kleshchev homorphism}

Fix $\ve = \pm 1$ and an integer $N > 0$ such that $\ve^N = 1$. Let 
$\lambda = (\lambda_1 \le \lambda_2 \le \cdots \le \lambda_n) \in \P_\ve(N)$ be a partition associated with all parts of the same parity. Following Section~\ref{ss:Dynkinandcentraliser} we choose a nilpotent element $e\in \gl_N$ with Jordan blocks of size $\lambda_1,...,\lambda_n$, and an involution $\tau : \gl_N \to \gl_N$ such that the fixed point subalgebra $\g = \gl_N^\tau$ is simple of type
$$\begin{array}{ccc}{\sf C} & \text{ when } &\ve = -1\\ 
{\sf B} \text{ or } {\sf D} & \text{ when } &\ve = 1. \end{array}$$
Since $\tau$ fixes the $\sl_2$-triple $(e,h,f)$ it also stabilises the slice $e + \gl_N^f$ and acts by Poisson automorphisms on $S(\gl_N,e)$.

Now let $(\sigma, \ell)$ be the $n\times n$ shift matrix and the level determined by $\lambda$ as in \eqref{L:lambdabij}. We pick a $(-\ve(-1)^{\lambda_1}, \sigma)$-admissible shape and by slight abuse of notation we also let $\tau$ denote the automorphism of $^\nu y_n(\sigma)$ described in \eqref{e:tauon}.
\begin{Theorem}
\label{T:BKhom}
There is a $\tau$-equivariant surjective Poisson homomorphism $^\nu y_n(\sigma) \to S(\g,e)$ determined by
\begin{eqnarray*}
\begin{array}{rcl}
d_{a, i, j}^{(r)} & \longmapsto & t_{a, i, a, j, \bnu_{a-1}}^{(r)}\\
e_{a,i, j}^{(r)} & \longmapsto &  t_{a, i, a+1, j, \bnu_{a}}^{(r)}\\
f_{a,i, j}^{(r)} & \longmapsto &  t_{a+1, i, a, j, \bnu_{a}}^{(r)}.
\end{array}
\end{eqnarray*}
The kernel is generated as a Poisson ideal by
\begin{eqnarray}
\label{e:kernelgensA}
\{d_{1, i, j}^{(r)}\mid 1\le i,j \le \nu_a, \ r > \lambda_1\}.
\end{eqnarray}

\end{Theorem}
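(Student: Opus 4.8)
The plan is to realise this homomorphism as the associated graded of the parabolic Brundan--Kleshchev homomorphism, and then to deduce each of the three assertions --- $\tau$-equivariance, surjectivity and the description of the kernel --- from the corresponding statement in \cite{BK06}, transported to the semiclassical level via Theorem~\ref{T:shiftedyangianPBW}.

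First I would construct the map. Let $\Psi_\nu : {}^\nu Y_n(\sigma) \onto U(\gl_N, e)$ denote the Brundan--Kleshchev homomorphism onto the finite $W$-algebra, \cite[Theorem~10.1]{BK06}, in its parabolic form (available for every $\sigma$-admissible shape thanks to \cite[Corollary~3.3]{BK06}), which sends the generators $D_{a,i,j}^{(r)}, E_{a,i,j}^{(r)}, F_{a,i,j}^{(r)}$ to the invariants $T$ of \cite[\textsection 9]{BK06}. This homomorphism is filtered with respect to the canonical filtration on ${}^\nu Y_n(\sigma)$ and the Kazhdan filtration on $U(\gl_N, e)$, so passing to associated graded algebras, using $\gr U(\gl_N, e) \cong S(\gl_N, e)$ \cite{Pr02, GG02} and the Poisson isomorphism $\pi_\nu : {}^\nu y_n(\sigma) \isoto \gr {}^\nu Y_n(\sigma)$ of Theorem~\ref{T:shiftedyangianPBW}, we obtain a surjective Poisson homomorphism $\phi := (\gr \Psi_\nu) \circ \pi_\nu : {}^\nu y_n(\sigma) \to S(\gl_N, e)$. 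Since $\pi_\nu(d_{a,i,j}^{(r)})$ is the canonical symbol of $D_{a,i,j}^{(r)}$, the element $\phi(d_{a,i,j}^{(r)})$ is the Kazhdan symbol of $T_{a,i,a,j}^{(r)}$, which equals $t_{a,i,a,j;\bnu_{a-1}}^{(r)}$ by the identification recorded in the first line of the proof of Proposition~\ref{P:tauequivariance}; the images of $e_{a,i,j}^{(r)}$ and $f_{a,i,j}^{(r)}$ are computed in the same way. Surjectivity of $\phi$ now follows from that of $\Psi_\nu$; alternatively it follows from the Lemma at the end of Section~\ref{ss:generatorsWalgebra}, since the linear parts of the $t$-elements in the image form the Dynkin-graded basis \eqref{e:gebasis} of $\g^e$, so their images already generate $S(\gl_N, e) \cong \C[\g^e]$ as a commutative algebra.

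For $\tau$-equivariance it suffices to check $\tau \circ \phi = \phi \circ \tau$ on the Poisson generators $d_{a,i,j}^{(r)}, e_{a,i,j}^{(r)}, f_{a,i,j}^{(r)}$. Feeding the definition \eqref{e:tauon} of $\tau$ on ${}^\nu y_n(\sigma)$ through $\phi$ and comparing with the action of $\tau$ on the $t$-elements obtained in the proof of Proposition~\ref{P:tauequivariance}, namely $\tau(t_{a,i,b,k;\bnu_t}^{(r)}) = \beta^{i+k}(-1)^{r - (\lambda(\nu)_b - \lambda(\nu)_a)/2} t_{b,k',a,i';\bnu_t}^{(r)}$, the comparison reduces on the diagonal blocks to a triviality and across consecutive blocks to the identity $\lambda(\nu)_{a+1} - \lambda(\nu)_a = 2 s_{a,a+1}(\nu)$, which holds by \eqref{e:lambdafromsigmaell}; the remaining signs then agree. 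Hence $\phi$ is $\tau$-equivariant.

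It remains to compute the kernel. The inclusion $\supseteq$ is immediate: $\Psi_\nu$ factors through the truncated shifted Yangian $Y_{n,\lambda_1}(\sigma) = {}^\nu Y_n(\sigma) / \langle D_{1,i,j}^{(r)} \mid r > \lambda_1 \rangle$, so $\Psi_\nu(D_{1,i,j}^{(r)}) = 0$ and therefore $\phi(d_{1,i,j}^{(r)}) = 0$ for $r > \lambda_1$, whence the Poisson ideal $\tilde I := (\, d_{1,i,j}^{(r)} \mid 1 \le i,j \le \nu_1, \ r > \lambda_1 \,)$ lies in $\ker \phi$. For the reverse inclusion I would work through the non-commutative picture once more: the main theorem of \cite{BK06} gives $\ker \Psi_\nu = \langle D_{1,i,j}^{(r)} \mid r > \lambda_1 \rangle$, and the identification of the canonical and Kazhdan filtrations (\cite[\textsection 5, \textsection 10]{BK06}) makes $\Psi_\nu$ strictly filtered, so that $\ker \phi = \gr(\ker \Psi_\nu)$; it then remains to identify $\gr(\ker\Psi_\nu)$ with $\tilde I$. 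As $\tilde I \subseteq \gr(\ker\Psi_\nu)$ is clear, equality follows by comparing graded dimensions in each canonical degree: using the relations \eqref{yangrel1}--\eqref{yangrel12} one shows that modulo $\tilde I$ every $d_{a,i,j}^{(r)}$ with $r$ large, and every $e_{a,i,b,j}^{(r)}$ and $f_{a,i,b,j}^{(r)}$ with $r$ large, becomes a polynomial in lower generators, so that ${}^\nu y_n(\sigma)/\tilde I$ is a polynomial ring on a set in bijection with \eqref{e:gebasis}, whose graded dimensions match those of $\gr Y_{n,\lambda_1}(\sigma) \cong S(\gl_N,e)$ given by the truncated PBW theorem of \cite{BK06}. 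Carrying out these truncation reductions for the Poisson relations and matching the two Hilbert series is where the real work lies, and I expect this to be the main obstacle; it is the semiclassical shadow of the truncation analysis in \cite{BK06} and of the corresponding argument in \cite[\textsection 4.3]{To23}. Everything else is a matter of careful bookkeeping together with the cited results.
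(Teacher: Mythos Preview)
Your construction of the map, the surjectivity argument, and the $\tau$-equivariance check are essentially identical to the paper's proof: both obtain the Poisson map as the associated graded of the Brundan--Kleshchev homomorphism via Theorem~\ref{T:shiftedyangianPBW}, and both verify $\tau$-equivariance by comparing \eqref{e:tauon} with the formula for $\tau(t_{a,i,b,k;\bnu_t}^{(r)})$ established in Proposition~\ref{P:tauequivariance}.

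The genuine difference is in the kernel computation. You propose to show $\tilde I = \ker\phi$ by a direct truncation analysis of the Poisson relations \eqref{yangrel1}--\eqref{yangrel12} together with a Hilbert series comparison against the truncated PBW theorem of \cite{BK06}; you correctly identify this as the main obstacle. The paper instead sidesteps this computation by passing to the \emph{loop} filtration (rather than the canonical one) and invoking Lemma~\ref{L:shiftedcurrentandcentraliser}(1): under the loop grading the quotient ${}^\nu y_n(\sigma)/\tilde I$ has associated graded a quotient of $S(\cc_n(\sigma))$ by the Lie ideal generated by $e_{1,i,1,j}t^r$ for $r\ge\lambda_1$, which that lemma identifies with $S(\gl_N^e)$; the surjection onto $S(\gl_N,e)$ then forces equality. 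This is exactly the argument of \cite[Proposition~4.7]{To23} that the paper cites. Your route would work, but it amounts to reproving the semiclassical truncated PBW theorem from scratch, whereas the loop-filtration trick reduces everything to the short Lie-algebraic computation already recorded in Lemma~\ref{L:shiftedcurrentandcentraliser}(1).
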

\begin{proof}
The existence of the homomorphism from the shifted Yangian to the quantum finite $W$-algebra is proven by Brundan and Kleshchev in \cite[Theorem~10.1]{BK06}. The result stated here follows from theirs by taking the associated graded algebras with respect to the canonical filtration and the Kazhdan filtration. The $\tau$-equivariance follows by comparing Proposition~\ref{P:tauequivariance} with formula \eqref{e:tauon}.

The fact that the elements \eqref{e:kernelgensA} lie in the kernel of the map is an immediate consequence of \cite[Theorem~10.1]{BK06}. The fact that these elements generate the kernel can be proven using an argument identical to \cite[Proposition~4.7]{To23}, deploying our Lemma~\ref{L:shiftedcurrentandcentraliser}(1).
\end{proof}

\subsection{Dirac reduction of the homomorphism}

We now proceed to introduce an element of $y_n(\sigma)^\tau$ which lies in the kernel of the map appearing in Theorem~\ref{T:BKhom}, which deforms the element \eqref{e:extrakernelLA} via the loop filtration.

To do so first let $\od_{1,i,j}^{(\lambda_1(\nu)+1)} := \cd_{1,i,j}^{(\lambda_1(\nu)+1)} \in \ ^{\nu}y_n(\sigma)$ 
Note that  $\tau(\od_{1,i,j}^{(\lambda_1(\nu)+1)}) = -\od_{1,i,j}^{(\lambda_1(\nu)+1)}.$
Therefore we may define
\begin{eqnarray}
\label{e:additionalgenerators}
\ttheta_{1,i,j}^{(\lambda_1(\nu) + s_{1,2} + 1)} := \{\od_{1,i,k}^{(\lambda_1(\nu)+1)}, \ce_{1,k,j}^{(s_{1,2}(\nu) + 1)}\} + \cy_n(\sigma)^\tau \in y_n(\sigma)^\tau/\cy_n(\sigma)^\tau = R(y_n(\sigma), \tau)
\end{eqnarray}
\begin{Theorem}
\label{kernelslicepres}
There is a surjective Poisson homomorphism $\varphi:R(^\nu y_n(\sigma), \tau) \onto S(\g_0, e)$. 
\begin{itemize}
\setlength{\itemsep}{4pt}
\item[(i)] When $\ve=1$, the kernel is Poisson generated  by 
\begin{eqnarray}
\label{e:lambdaoddgens}
\{\eta_{1,i,j}^{(r)} \mid r > \lambda_1(\nu) \ , 1 \le i,j \le \nu_1\}.
\end{eqnarray}

\item[(ii)] When $\ve=-1$ and $\nu_1 \ne 1$ the kernel is Poisson generated by \eqref{e:lambdaoddgens}

\item[(iii)] When $\ve= -1$ and $\nu_1 = 1$ the kernel is Poisson generated by those same elements, along with 
\begin{eqnarray}
\label{e:extrakernel}
\{\ttheta_{1,i,j}^{( \lambda_1(\nu) + s_{1,2}(\nu)+1)} \mid  1\le i , \le \nu_1 \ , 1 \le j \le \nu_{2} \}.
\end{eqnarray}
\end{itemize}
\end{Theorem}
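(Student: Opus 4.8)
\textbf{Proof plan for Theorem~\ref{kernelslicepres}.}
The strategy is to take the Dirac reduction of the surjective homomorphism of Theorem~\ref{T:BKhom}. By Proposition~\ref{equivariant iso} and Corollary~\ref{Dirrediso} it suffices to treat any single $(-\ve(-1)^{\lambda_1},\sigma)$-admissible shape $\nu$, so we may work with one convenient choice. Since the homomorphism $^\nu y_n(\sigma) \onto S(\g,e)$ is $\tau$-equivariant and $\tau$ acts reductively (it generates a copy of $\Z/2$), applying the functor $R(-,\tau)$ yields a surjective Poisson homomorphism $R(^\nu y_n(\sigma),\tau) \onto R(S(\g,e),\tau)$, and by Theorem~\cite[Theorem~2.6]{To23} (recalled in Section~\ref{ss:Diracforfixed}) the right hand side is $S(\g_0,e)$. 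This produces the map $\varphi$ immediately. The real content is the identification of its kernel.

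For the kernel, write $K \subseteq {}^\nu y_n(\sigma)$ for the kernel of the Brundan--Kleshchev map, which by Theorem~\ref{T:BKhom} is the Poisson ideal generated by $\{d_{1,i,j}^{(r)} \mid r > \lambda_1(\nu)\}$. The kernel of $\varphi$ is $R(K,\tau) = K^\tau/(K^\tau \cap \cy_n(\sigma)^\tau)$, i.e. the image of $K^\tau$ in $R(^\nu y_n(\sigma),\tau)$. I would first show this image is Poisson generated by the images of the $\tau$-fixed (or $\tau$-split into $\pm$ eigenparts) generators of $K$: since $\tau(d_{1,i,j}^{(r)}) = (\beta)^{i+j}(-1)^r d_{1,j',i'}^{(r)}$, the symmetrised elements $\hd_{1,i,j}^{(r)}$ map (by \eqref{e:ehatandcheck}) to $\eta_{1,i,j}^{(r)}$ in the Dirac reduction, while the antisymmetrised elements $\cd_{1,i,j}^{(r)}$ map to zero in $R$ except that their Poisson brackets with $\ce$-type elements can survive — this is precisely where $\ttheta_{1,i,j}^{(\lambda_1(\nu)+s_{1,2}(\nu)+1)}$ enters in case (iii). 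The cleanest route is to pass to the associated graded with respect to the loop filtration: by Corollary~\ref{C:loopforcurrent} we have $\gr R(^\nu y_n(\sigma),\tau) \cong S(\cc_n(\sigma)^\tau)$, and $\gr$ of the kernel of $\varphi$ contains (and I would argue equals) the ideal computed in Lemma~\ref{L:shiftedcurrentandcentraliser}(2). Indeed, Lemma~\ref{L:shiftedcurrentandcentraliser}(2) already pins down, at the graded level, that the kernel of $\cc_n(\sigma)^\tau \onto (\g^e)^\tau$ is generated by $\{\eta_{1,i,j;r}\mid r\ge\lambda_1, \ 1\le i,j\le\nu_1\}$ in cases (i) and (ii), with the extra generators \eqref{e:extrakernelLA} needed exactly when $\ve=-1,\nu_1=1$. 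One checks that the top loop-degree symbol of $\eta_{1,i,j}^{(r)}$ is $\eta_{1,i,j;r-1}$ and the top symbol of $\ttheta_{1,i,j}^{(\lambda_1(\nu)+s_{1,2}(\nu)+1)}$ is a nonzero multiple of $\theta_{1,i,j;\lambda_1+s_{1,2}}$ (using $\od_{1,i,j}^{(\lambda_1(\nu)+1)}$ deforms $d_{1,i,j}$ which maps to $\eta_{1,i,j;\lambda_1}$ — see Lemma~\ref{L:shiftedcurrentandcentraliser}(1) and the definition \eqref{e:additionalgenerators}), so these generators lift the graded generators of Lemma~\ref{L:shiftedcurrentandcentraliser}.

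To finish, I would run the standard filtered-to-graded comparison: let $\mathfrak{k} \subseteq R(^\nu y_n(\sigma),\tau)$ be the Poisson ideal generated by the elements in the statement (i.e. \eqref{e:lambdaoddgens} alone in cases (i),(ii), or together with \eqref{e:extrakernel} in case (iii)). It is contained in $\ker\varphi$ because each listed element maps to $0$: for $\eta_{1,i,j}^{(r)}$ with $r>\lambda_1(\nu)$ this follows from Theorem~\ref{T:BKhom}, \eqref{e:kernelgensA} and $\tau$-equivariance; for $\ttheta_{1,i,j}^{(\lambda_1(\nu)+s_{1,2}(\nu)+1)}$ it follows since it is built as a Poisson bracket of $\cd_{1,i,k}^{(\lambda_1(\nu)+1)} \in K^\tau$-eigenpart with another element, hence lies in $K^\tau$ mod $\cy_n(\sigma)^\tau$. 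For the reverse inclusion, take $\gr$: $\gr\mathfrak{k} \subseteq \gr\ker\varphi \subseteq S(\cc_n(\sigma)^\tau)$, and by Lemma~\ref{L:shiftedcurrentandcentraliser}(2) the outer term $\gr\ker\varphi$ equals the ideal generated by $\{\eta_{1,i,j;r}\mid r\ge\lambda_1\}$ (plus \eqref{e:extrakernelLA} in case (iii)); since the top symbols of our chosen generators of $\mathfrak{k}$ generate exactly that ideal, $\gr\mathfrak{k} = \gr\ker\varphi$, whence $\mathfrak{k} = \ker\varphi$ by a completeness/separatedness argument for the loop filtration. \textbf{The main obstacle} will be the case $\ve=-1$, $\nu_1=1$: here one must verify that the single extra generator $\ttheta_{1,i,j}^{(\lambda_1(\nu)+s_{1,2}(\nu)+1)}$ really is forced into the kernel (it is not visible among the $d$-generators of $K$ because $\cd_{1,1,1}^{(\lambda_1(\nu)+1)}$ itself maps to zero in $R$, yet its bracket with $\ce_{1,1,j}^{(s_{1,2}(\nu)+1)}$ need not), and that no further generators are needed — both points are precisely the content of Lemma~\ref{L:shiftedcurrentandcentraliser}(2)(iii), and the computation there (using \eqref{currel7} and $\lambda_2 = \lambda_1 + 2s_{1,2}$ to propagate into the $a\ge 2$ current subalgebra) is exactly the graded shadow of what must be repeated, or invoked, at the filtered level. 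Matching signs and scalars through \eqref{e:ehatandcheck}, \eqref{e:additionalgenerators} and the definitions \eqref{e:cijrdefinition}, \eqref{e:invaraintsdef} is the routine-but-delicate bookkeeping.
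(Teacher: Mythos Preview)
Your proposal is correct and follows essentially the same route as the paper: construct $\varphi$ by applying Dirac reduction to the $\tau$-equivariant Brundan--Kleshchev map, check that the listed elements lie in $\ker\varphi$, identify their top loop-filtration symbols with the generators from Lemma~\ref{L:shiftedcurrentandcentraliser}(2), and then run a filtered-to-graded argument. The only cosmetic difference is in the endgame: where you sandwich $\gr\mathfrak{k}\subseteq\gr\ker\varphi\subseteq\ker\gr\varphi$ and identify the outer terms, the paper instead argues that $R(^\nu y_n(\sigma),\tau)/\mathfrak{k}$ is commutatively generated by the finite set \eqref{Walggens} (via $\gr$ and Lemma~\ref{L:shiftedcurrentandcentraliser}(2)) and then invokes \cite[Theorem~2.4]{To23} to see that $S(\g_0,e)$ is polynomial on the images of those same generators, so the induced surjection is an isomorphism --- this sidesteps having to identify $\gr\varphi$ explicitly.
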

\begin{proof}
The existence of this surjective map follows by \cite[(2.6) \&Theorem 2.6]{To23} and Theorem~\ref{T:BKhom}.
For the kernel consider first the case (i).
It follows from Theorem \ref{T:BKhom} that the elements $\{\eta_{1,i,j}^{(r)} \mid r > \lambda_1 \ , 1 \le i,j \le \nu_1\}$ lie in the kernel of $\varphi$. Let $I$ denote the Poisson ideal generated by \ref{e:lambdaoddgens}. By Lemma \ref{L:shiftedcurrentandcentraliser}(2)(i) we see that the associated graded with respect to the loop filtration $\gr(R(^\nu y_n(\sigma), \tau)/I)$ is generated as a commutative algebra by the top filtered component of the elements
\begin{equation}
\label{Walggens}
\begin{split}
&\{\theta_{a,b,i,j}^{(r)} \mid 1 \leq a <b \leq m, \ 1 \leq i \leq \nu_a, \ 1 \leq j \leq \nu_b, \ s_{a,b}(\nu)<r \leq s_{a,b}(\nu)+\lambda_a(\nu)  \} \\
& \ \ \ \ \ \ \ \ \ \ \ \ \ \cup \{\eta_{a,i,j}^{(r)} \mid 1 \leq a \leq m, \ 1 \leq i,j \leq \nu_a, \ 0<r \leq \lambda_a(\nu)\}
\end{split}
\end{equation}
modulo the linear relation \eqref{diryangrel0}.
In particular $R(^\nu y_n(\sigma), \tau)/I$ is generated by these elements as a commutative algebra.

By \cite[Theorem 2.4]{To23} we have that $S(\g_0, e)$ is a polynomial algebra modulo the same linear relation generated by the images of these elements under $\varphi$ and hence $\varphi$ is an isomorphism.

Now for cases (ii) and (iii) it follows by Theorem \ref{T:BKhom} that the elements \ref{e:extrakernel} lie in the kernel of $\varphi$. Again let $I$ denote the poisson ideal generated by the elements \eqref{e:lambdaoddgens} and \eqref{e:extrakernel}. As before it is enough to show that $R(^\nu y_n(\sigma), \tau)/I$ is generated as a commutative algebra by the elements \eqref{Walggens} modulo \eqref{diryangrel0}. A calculation in $^\nu y_n(\sigma)$ using \eqref{C:loopforcurrent} shows that the top filtered component of $\ttheta_{1,i,j}^{( \lambda_1(\nu) + s_{1,2}(\nu)+1)}$ with respect to the loop filtration is $\theta_{1,i,j; \lambda_1(\nu) + s_{1,2}(\nu)}$.

Using Lemma \ref{L:shiftedcurrentandcentraliser}(2)(ii)\&(iii) we see that the associated graded with respect to the loop filtration $\gr(R(^\nu y_n(\sigma), \tau)/I)$ is generated as a commutative algebra by the top filtered component of the elements \eqref{Walggens} modulo \eqref{diryangrel0}. The proof now proceeds as in case (i).
\end{proof}

\begin{Remark}
This paper was built upon the developments made in \cite[Part I]{To23}. When writing this work we noticed several minor typos in that paper and we list them here for completeness.
\begin{enumerate}
\item There is a sign error in \cite[(3.29), (3.31)]{To23}.

\item  In  \cite[(3.70)]{To23} $\varpi_{r,s}$  should be replaced with $\varpi_{s,r}$
\item Definition 3.22 makes no sense the way it is written; the $f_{i,j}$ are defined for $j > i$, however the recurrence \cite[(2.19)]{BK06} should be followed.

\item The proof of \cite[Lemma 4.2]{To23} has a double sign error, but these cancel and stated result is correct.

\item The map in \cite[(4.18)]{To23} should read $c_{i,i}^{(r - 1)} \mapsto (-1)^{r-1}  t^{(r)}_{i,i,i-1}$.

\end{enumerate}
\end{Remark}

\vspace{10pt}

\noindent {Contact details:}\medskip

Lukas Tappeiner {\sf lt862$@$bath.ac.uk};

Lewis Topley {\sf lt803$@$bath.ac.uk}.\medskip

Department of Mathematical Sciences, University of Bath, Claverton Down, Bath BA2 7AY, United Kingdom.


\begin{thebibliography}{GGRW04}

\bibitem[AKMFP24]{AKMFP24}
{\sc D. Adamović, V. G. Kac, P. Möseneder Frajria, P. Papi}
Defining relations for minimal unitary quantum affine  W -algebras
{\it Comm. Math. Phys.}{\bf 405} (2024), no. 2, Paper No. 33, 25 pp.

\bibitem[ACET20]{ACET20}
{\sc F. Ambrosio, G. Carnovale, F. Esposito, L. Topley},
Universal filtered quantizations of nilpotent Slodowy slices,
{\it to appear in J. Noncomm. Geom.} ({\tt DOI 10.4171/JNCG/544}) (2023).

\bibitem[Ar15]{Ar15}
{\sc T. Arakawa},
 Rationality of  W -algebras: principal nilpotent cases
{\it Ann. of Math.} (2) 182 (2015), no. 2, 565--604.

\bibitem[Ar17]{Ar17}
{\sc T. Arakawa},
Springer INdAM Ser., 19
Springer, Cham, 2017, 179--250.

\bibitem[AF19]{AF19}
{\sc T. Arakawa, Tomoyuki, E. Frenkel},
Quantum Langlands duality of representations of  $\mathcal{W}$-algebras
{\it Compos. Math.} {\bf 155} (2019), no. 12, 2235--2262.




\bibitem[BLPW16]{BLPW16}{\sc T. Braden, A. Licata, N. Proudfoot, B. Webster},
Quantizations of conical symplectic resolutionsII: category  $\mathcal{O}$  and symplectic duality
{\it Ast{\'e}risque} No. 384 (2016), 75--179.








\bibitem[Br09]{Br09}
{\sc J. Brown},
Twisted Yangians and finite  $W$-algebras,
{\it Transform. Groups} {\bf 14} (2009), no. 1, 87–114.



\bibitem[Br11]{Br11}{\sc J. Brundan},
M{\oe}glin's theorem and Goldie rank polynomials in Cartan type  {\sf A}. 
{\it Compos. Math.} {\bf 147} (2011), no. 6, 1741-–1771.

\bibitem[BK05]{BK05}
{\sc J.~Brundan \& A. Kleshchev},
Parabolic presentations of the Yangian $Y(\gl_n)$.
\emph{Comm. Math. Phys.} {\bf 254}\, (2005), no. 1, 191--220.

\bibitem[BK06]{BK06}
{\sc J. Brundan, A. Kleshchev},
Shifted Yangians and finite $W$-algebras.
{\it Adv.\ Math.} \ {\bf 200} (2006), 136--195.

\bibitem[BK08]{BK08}
\bysame,
{\em Representations of shifted Yangians and finite $W$-algebras},
Mem.\ Amer.\ Math.\ Soc.\
{\bf 196} (2008).

\bibitem[BT18]{BT18}
{\sc J. Brundan and L. Topley}, 
The $p$-centre of Yangians and shifted Yangians.
{\it Mosc.\ Math.\ J.}\ {\bf 18} (2018), 617--657.

\bibitem[Br09]{Br09}
{\sc J. Brown},
Twisted Yangians and finite W-algebras.
{\it Transform. Groups} {\bf 14} (2009), no. 1, 87--114.


\bibitem[CM93]{CM93}
{\sc D.H.~Collingwood} and {\sc W.~McGovern},
{``Nilpotent orbits in semisimple Lie algebras''}.
 Van Nostrand Reinhold, New york, 1993.

\bibitem[DSKV16]{DSKV16}
{\sc A. De Sole,V. G. Kac, D. Valeri},
 Structure of classical (finite and affine)  $\mathcal{W}$-algebras
{\it J. Eur. Math. Soc. (JEMS)} {\bf 18} (2016), no. 9, 1873--1908.
 


\bibitem[Dr87]{Dr87}
{\sc V.  Drinfeld}, 
Quantum groups
American Mathematical Society, Providence, RI, 1987, 798–820.

\bibitem[Dr88]{Dr88}
{\sc V. Drinfeld},
A new realization of Yangians and quantized affine algebras,
{\it Soviet Math. Dokl.} {\bf 36} (1988), 212–216.






\bibitem[EK05]{EK05}
{\sc A.G. Elashvili, V.G. Kac}
Classification of good gradings of simple Lie algebras
{\it Amer. Math. Soc. Transl. Ser.} 2, {\bf 213}
Adv. Math. Sci., 56.
American Mathematical Society, Providence, RI, 2005, 85–104.

\bibitem[FRT90]{FRT90}
{\sc L. Faddeev, N. Reshetikhin, L. Takhtadzhyan}, Quantization of Lie groups and Lie algebras, {\it Leningrad Math. J.} {\bf 1} (1990), 193–225.

 \bibitem[Fr02]{Fr02}
 {\sc E. Frenkel},
 Langlands correspondence for loop groups. Cambridge Studies in Advanced Mathematics, 103. Cambridge University Press, Cambridge, 2007.

\bibitem[GG02]{GG02}
{\sc W.~L.~Gan, V.~Ginzburg},
Quantization of Slodowy slices.
{\it Internat. Math. Res. Notices} {\bf 5} \ (2002), 243--255.

\bibitem[Gi09]{Gi09} {\sc V. Ginzburg},
Harish-Chandra bimodules for quantized Slodowy slices.
{\it Represent. Theory} {\bf 13} \ (2009), 236--271.


\bibitem[GT19a]{GT19a}
{\sc S.~M.~Goodwin \& L.~Topley},
Modular finite $W$-algebras,
{\it Int. Math. Res. Not. IMRN} (2019), no. 18, 5811--5853.


\bibitem[GT19b]{GT19b}
\bysame,
Minimal-dimensional representations of reduced enveloping algebras for$\gl_n$.
{\it Compos. Math}. {\bf 155} \ (2019), no. 8, 1594--1617.

\bibitem[GT19c]{GT19c}
\bysame,
Restricted shifted Yangians and restricted finite $W$-algebras.
{\tt arXiv:1903.03079} (2019).



\bibitem[Ja04]{Ja04}
{\sc J.C.~Jantzen},
``Nilpotent orbits in representation theory'', in:
B.\,Orsted (ed.), ``Representation and Lie theory'', Progr. in
Math., {\bf 228}, 1--211, Birkh\"auser, Boston 2004.





 \bibitem[Fr07]{Fr07}
 {\sc E. Frenkel},
 Langlands correspondence for loop groups. Cambridge Studies in Advanced Mathematics, 103. Cambridge University Press, Cambridge, 2007.
 
\bibitem[GGRW04]{GGRW04}
{\sc I. Gelfand, S. Gelfand, V. Retakh, R. Wilson},
Quasideterminants.
{\tt	arXiv:0208.146} (2004). 
 
 \bibitem[KWWY14]{KWWY14}
 {\sc J. Kamnitzer, B. Webster, A. Weekes, O. Yacobi},
Yangians and quantizations of slices in the affine Grassmannian.
{\it Algebra Number Theory} {\bf 8} (2014), no. 4, 857–893.



\bibitem[LPV13]{LPV13}
{\sc C. Laurent-Gengoux, A. Pichereau, P. Vanhaecke},
``Poisson structures''.
Grundlehren der Mathematischen Wissenschaften [Fundamental Principles of Mathematical Sciences], 347. Springer, Heidelberg, 2013.

\bibitem[Lo10a]{Lo10a}
I. Losev,
Quantized symplectic actions and $W$-algebras.
{\it J. Amer. Math. Soc.} {\bf 23} (2010), no. 1, 35--59.

\bibitem[Lo10b]{Lo10b}
\bysame,
Quantizations of nilpotent orbits vs 1-dimensional representations of $W$-algebras.
{\tt arXiv:1004.1669}, 2010.



\bibitem[Lo15]{Lo15}
\bysame, 
Dimensions of irreducible modules over W-algebras and Goldie ranks.
{\it Invent. Math.} {\bf 200} (2015), no. 3, 849–-923.


\bibitem[Lo22]{Lo22}
\bysame,
Deformations of symplectic singularities and Orbit method for semisimple Lie algebras.
{\it Selecta Math. (N.S.)} {\bf 28} \ (2022), no. 2, Paper No. 30, 52 pp.

\bibitem[LMM21]{LMM21}
{\sc I. Losev, L. Mason-Brown, D. Matvieievskyi},
Unipotent Ideals and Harish-Chandra Bimodules.
{\tt arXiv:2108.03453v2} (2021).

\bibitem[LWZ23]{LWZ23}
{\sc K. Lu, W. Wang, W. Zhang},
A Drinfeld type presentation of twisted Yangians.
{\tt	arXiv:2308.12254} (2023).



\bibitem[Mo07]{Mo07}
{\sc A. Molev},
``Yangians and classical Lie algebras''.
Math. Surveys Monogr., 143
American Mathematical Society, Providence, RI, 2007, xviii+400 pp.

\bibitem[Ol92]{Ol92}
{\sc G. I. Olshanskii}, Twisted Yangians and infinite-dimensional classical Lie algebras, in ‘Quantum Groups (P. P. Kulish, Ed.)’, Lecture Notes in Math. 1510, Springer, Berlin Heidelberg, 1992, pp 103--120.









\bibitem[Pr02]{Pr02}
{\sc A. Premet},
Special transverse slices and their enveloping algebras.
With an appendix by Serge Skryabin. {\it Adv. Math.} {\bf 170} (2002), no. 1, 1--55.

\bibitem[Pr07a]{PrJI}
\bysame,
Enveloping algebras of Slodowy slices and the Joseph ideal. {\it J. Eur. Math. Soc. (JEMS)} {\bf 9} (2007), no. 3, 487--543.


\bibitem[Pr10]{Pr10}
\bysame,
Commutative quotients of finite W-algebras.
{\it Adv. Math.} 225 (2010), no. 1, 269--306.

\bibitem[Pr14]{Pr14}
\bysame,
Multiplicity-free primitive ideals associated with rigid nilpotent orbits.
{\it Transform. Groups} {\bf 19} (2014), no. 2, 569--641.

\bibitem[PT14]{PT14}
{\sc A. Premet, L. Topley},
{Derived subalgebras of centralisers and finite W-algebras}.
{\it Compos. Math.} \ {\bf 150} (2014), no. 9, 1485--1548.


\bibitem[RS99]{RS99}
{\sc E. Ragoucy, P. Sorba}. Yangian Realisations from Finite $\mathcal{W}$-Algebras. {\it Comm. Math. Phys.} {\bf 203} (1999), 551--572.

\bibitem[Ra01]{Ra01}
{\sc E. Ragoucy},
Twisted Yangians and folded $W$-algebras.
{\it Internat. J. Modern Phys. A} {\bf 16} (2001), no. 13, 2411--2433.




\bibitem[Ser06]{Ser06}
{\sc J.-P. Serre},
Lie algebras and Lie groups.
1964 lectures given at Harvard University.
Lecture Notes in Mathematics, 1500. Springer-Verlag, Berlin, 2006.



\bibitem[Sl80]{Sl80}
{\sc P. Slodowy},
``Simple Singularities and Simple Algebraic Groups''.
Lecture Notes in Mathematics 815, Springer-Verlag, Berlin Heidelberg, 1980.

\bibitem[To23]{To23}
{\sc L. Topley},
One dimensional representations of finite W-algebras, Dirac reduction and the orbit method.
{\it Invent. Math.} {\bf 234} (2023), no.3, 1039–1107.




\bibitem[Ya10]{Ya10}{\sc O. Yakimova}, On the derived algebra of a centraliser. {\it Bull. Sci. Math.,} {\bf 134} (2010), 579--587.

\end{thebibliography}
\end{document}